\newtheorem{thm}{Theorem}[section]
\newtheorem{cor}[thm]{Corollary}
\newtheorem{prop}[thm]{Proposition}
\newtheorem{lem}[thm]{Lemma}
\newtheorem{quest}[thm]{Question}
\theoremstyle{definition}
\newtheorem{defn}[thm]{Definition}
\newtheorem{fact}[thm]{Fact}
\newtheorem*{theorem*}{Theorem}
\theoremstyle{remark}
\newtheorem{rem}[thm]{Remark}
\let\c@equation\c@thm
\numberwithin{equation}{section}
\def\Ind{\setbox0=\hbox{$x$}\kern\wd0\hbox to 0pt{\hss$\mid$\hss} \lower.9\ht0\hbox to 0pt{\hss$\smile$\hss}\kern\wd0} 
\def\Notind{\setbox0=\hbox{$x$}\kern\wd0\hbox to 0pt{\mathchardef \nn=12854\hss$\nn$\kern1.4\wd0\hss}\hbox to 0pt{\hss$\mid$\hss}\lower.9\ht0 \hbox to 0pt{\hss$\smile$\hss}\kern\wd0}
\title[pseudo-finite permutation groups]{Primitive pseudo-finite permutation groups of finite $SU$-rank}
\thanks{$^\dagger$ Karhum\"{a}ki is supported by the Finnish Science Academy grant no: 338334.  }
\thanks{$^\ddagger$ Ramsey is supported by NSF grant DMS-2246992.  }
\author[U. Karhum\"aki]{Ulla Karhum\"aki$^\dagger$}
\address{Department of Mathematics \\
University of Helsinki \\ Helsinki }
\email{ulla.karhumaki@helsinki.fi}
\author[N. Ramsey]{Nicholas Ramsey$^{\ddagger}$}
\address{Department of Mathematics \\
University of Notre Dame\\
 USA}
\email{sramsey5@nd.edu}
\date{\today}
\begin{document}

\maketitle

\begin{abstract}
We study definably primitive pseudo-finite permutation groups of finite $SU$-rank.  We show that if $(G,X)$ is such a permutation group, then the rank of $G$ can be bounded in terms of the rank of $X$, providing an analogue of a theorem of Borovik and Cherlin in the setting of definably primitive permutation groups of finite Morley rank. 
\end{abstract}

\setcounter{tocdepth}{1}
\tableofcontents

\section{Introduction}
A \emph{permutation group} $(G,X)$ is a group $G$ together with a faithful action of $G$ on the set $X$. A permutation group is called \emph{primitive} if the action of $G$ preserves no nontrivial equivalence relation on $X$.  The primitive permutation groups are the basic building blocks for all permutation groups and, for this reason, their classification, in various categories, is often of considerable interest. Within a model-theoretic context, it is natural to consider permutation groups $(G,X)$ where the group $G$, the set $X$, and the action $G \curvearrowright X$ are all definable in some tame structure. In this setting, a permutation group is called \emph{definably primitive} if the action preserves no nontrivial \emph{definable} equivalence relation. Definably primitive permutation groups of finite Morley rank were studied in detail by Macpherson and Pillay \cite{Macpherson-Pillay1995} and later by Borovik and Cherlin \cite{BC08}, who proved strong structure theorems for them, and this work inspired several generalisations to different model-theoretic contexts. Motivated by this work, this paper studies definably primitive pseudo-finite permutation groups of finite $SU$-rank, showing that they are quite constrained. These groups are a core example of groups in simple theories and also are connected to applications to, e.g., binding groups in the theory ACFA of existentially closed difference fields. 

In particular, we prove that, in a definably primitive pseudo-finite permutation group $(G,X)$ of finite $SU$-rank, one can bound the $SU$-rank of $G$ in terms of the $SU$-rank of $X$.  This is an analogue of a theorem of Borovik and Cherlin, who proved there is a function $f:\mathbb{N} \to \mathbb{N}$ such that, in any definably primitive permutation group $(G,X)$ of finite Morley rank, $\mathrm{RM}(G) \leq f(\mathrm{RM}(X))$. The question of whether such a bound could be given for pseudo-finite permutation groups of finite $SU$-rank was asked by Macpherson \cite[Problem 5.0.15]{Macpherson2018} and, in a somewhat more limited form, by Elwes, Jaligot, Macpherson and Ryten \cite{Elwesetal}. The challenge of proving such theorems in the finite Morley rank setting is that, although the rank is extremely well-behaved, one has incomplete knowledge of the simple groups that can appear, since the Cherlin-Zilber conjecture, which predicts that simple groups of finite Morley rank are algebraic groups over algebraically closed fields, is still open. Working instead with pseudo-finite permutation groups of finite $SU$-rank makes things simultaneously easier and harder: the notion of rank is less well-behaved (there is, for example, no sensible notion of multiplicity) but, thanks to the classification of finite simple groups (CFSG), we have almost perfect knowledge of what simple groups in this setting can look like. Our main theorem is the following.

\begin{theorem*}[Theorem \ref{th: main}]
If $(G,X)$ is a pseudo-finite definably primitive permutation group of finite $SU$-rank, then $SU(G)$ can be bounded as a function of $SU(X)$. 
 
More precisely, if $(G,X)$ is a pseudo-finite definably primitive permutation group of finite $SU$-rank then, setting $r=SU(X)$, then the following holds. \begin{enumerate}
\item If ${\rm Rad}(G)\neq 1$ then $SU(G) \leqslant r + (r^2+1)r$.
\item If ${\rm Rad}(G)= 1$ then one of the following holds. \begin{enumerate}
\item $(G,X)$ is an almost simple group and $SU(G)  \leq 8r^{2} + 2r$.
\item $(G,X)$ is of simple diagonal action type and $SU(G) \leqslant 2r$.
\item $(G,X)$ is of product action type and $SU(G)  \leq 8r^{2}+2r$.
\end{enumerate} 
\end{enumerate}
\end{theorem*}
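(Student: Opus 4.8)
The proof goes through a pseudo-finite analogue of the O'Nan--Scott theorem, with each structural type contributing one of the stated polynomial bounds. The organising reduction is the orbit--stabiliser calculation. Since $(G,X)$ is definably primitive it is transitive --- the orbit equivalence relation is definable, hence trivial, and the action is nontrivial by faithfulness --- so $X \cong G/G_x$ and the Lascar inequalities, which collapse to exact additivity because the $SU$-ranks involved are finite ordinals, give $SU(G)=SU(X)+SU(G_x)$. Iterating this along a sequence $x_1,\dots,x_b$ whose pointwise stabiliser is trivial (a \emph{base}) yields $SU(G)\le b\cdot SU(X)=br$, where $r=SU(X)$. The whole theorem therefore reduces to bounding the base size $b$ in each O'Nan--Scott type, and the stated bounds $r+(r^2+1)r$, $8r^2+2r$, and $2r$ record precisely these estimates. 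This mirrors the strategy of Borovik and Cherlin \cite{BC08}, but replaces their use of Morley rank and multiplicity with forking-calculus and the additivity of $SU$-rank.

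The first genuine step is to establish the definable socle dichotomy: the (definable) socle of a definably primitive pseudo-finite group of finite $SU$-rank is either definably abelian, which is exactly the case ${\rm Rad}(G)\neq 1$, or a direct product $T_1\times\cdots\times T_k$ of definable pseudo-finite \emph{nonabelian} simple groups, all isomorphic. In the radical case, definable primitivity forces the abelian socle $A$ to act regularly, so $X\cong A$ and $SU(A)=r$; writing $G=A\rtimes G_x$, the complement $G_x$ acts faithfully and definably on $A$, which is a module over a definable pseudo-finite field. One bounds $SU(G_x)$ by observing that $G_x$ embeds into the automorphism group of $A$, a group of $SU$-rank at most $r^2$, and then applying the base argument to the faithful action $G_x \curvearrowright A$, whose base size is $O(r^2)$; combined with the $+SU(A)$ term this produces the (cubic) bound $r+(r^2+1)r$.

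For ${\rm Rad}(G)=1$ the socle is $T^k$, and here CFSG together with Ryten's classification of pseudo-finite simple groups identify each $T_i$ with a (possibly twisted) Chevalley group $\mathbf{G}(\mathbb{F})$ over a pseudo-finite field, with the dimension formula $SU(\mathbf{G}(\mathbb{F}))=\dim(\mathbf{G})\cdot SU(\mathbb{F})$. In the \emph{almost simple} case ($k=1$) the point stabiliser $T_x$ is a maximal definable subgroup of codimension $SU(T)-SU(T_x)=r$, and the required bound $SU(T)\le 8r^2$ follows from the Liebeck-type fact that any maximal subgroup of a simple group of Lie type has codimension at least a fixed linear function of the Lie rank $\ell$, so $\ell=O(r)$ while $\dim(\mathbf{G})=O(\ell^2)$ gives the quadratic bound; the outer part $G/T$ contributes only the lower-rank field and graph automorphisms. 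The \emph{simple diagonal} case has $SU(X)=(k-1)SU(T)=r$ and $SU(G)=k\,SU(T)+o(SU(T))$, and $\frac{k}{k-1}\le 2$ yields the bound $2r$. The \emph{product action} case, $G\le H\wr S_k$ acting on $Y^k$ with $(H,Y)$ almost simple of degree $SU(Y)=s$ and $r=ks$, reduces directly to the almost simple bound: $SU(G)\le k(8s^2+2s)=8s\,r+2r\le 8r^2+2r$ since $s\le r$.

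The main obstacle I expect is twofold. First, the definable socle dichotomy must be proved in the definably primitive pseudo-finite setting, where definable primitivity is strictly weaker than primitivity and one has no finite counting to fall back on, so the classical normal-subgroup analysis of O'Nan--Scott has to be reconstructed using definability of socles and centralisers and the good behaviour of $SU$-rank under definable quotients. Second, and more seriously, is the almost simple case: extracting the quadratic bound $SU(T)\le 8r^2$ requires translating the codimension lower bounds for maximal subgroups of finite simple groups into the pseudo-finite $SU$-rank framework via the dimension formula, and ensuring the estimate is \emph{uniform} in the Lie rank, the twisting, and the $SU$-rank of the underlying field --- this is where the full force of CFSG and Ryten's theorem enters, and where the constant $8$ is pinned down. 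Since both the diagonal and product action bounds are built on top of this estimate, controlling it uniformly is the crux of the argument.
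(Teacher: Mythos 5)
Your case division and final numerology match the paper, but the proposal has a genuine gap at its foundation: you assume an O'Nan--Scott-type classification for \emph{definably} primitive pseudo-finite groups and propose to obtain it by ``reconstructing the normal-subgroup analysis definably.'' That reconstruction does not deliver what your case paragraphs use: that $A$ carries a vector space structure over a pseudo-finite field with $G_x$ acting irreducibly and linearly (affine case), that the point stabiliser is a full diagonal copy of $T$ in $T^k$ (diagonal case), and that $X$ identifies with $Y^\ell$ under a product action. Definable primitivity alone gives only the socle structure of Proposition~\ref{propo:socle}, and it is strictly weaker than primitivity --- the paper exhibits a definably primitive, non-primitive example of finite $SU$-rank in Section~\ref{sec:prim}. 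The missing key step is Theorem~\ref{th:primitive}: if the point stabilisers are infinite (the finite-stabiliser case is dispatched separately, since then $SU(G)=SU(X)\leq$ all the stated bounds), definable primitivity upgrades to genuine primitivity, via the almost normaliser, Schlichting's theorem, and the Indecomposability Theorem. Only after this upgrade can one invoke the Liebeck--Macpherson--Tent classification (Theorem~\ref{thm:LMT}) of primitive pseudo-finite permutation groups, winnowed by finite $SU$-rank (excluding alternating socles and unbounded Lie rank), which is what actually produces your four types with their fine structure. Your sketch never mentions LMT or any substitute for it, so the case analysis is unsupported.

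Your organising claim that ``the whole theorem reduces to bounding the base size $b$'' is also wrong as stated: for standard actions (e.g.\ $\mathrm{PSL}_n$ on projective space) minimal base size grows with the Lie rank, so no absolute per-type bound $b$ exists, and your own almost-simple paragraph silently abandons the base framing for a codimension argument. The paper uses a base only where a constant one exists: Theorem~\ref{fact: small base} gives base size $6$ for non-standard almost simple actions, whence $SU(G)\leq 6r$; for standard actions the stabiliser is parabolic and the bound comes instead from the Bruhat-decomposition formula $SU=\dim\cdot SU(F)$ (Lemma~\ref{lem: dimension inequality}), the torus inequality $\dim(G/P)\geq \mathrm{rank}(G)$ (Fact~\ref{fact: sam fact}), and $\dim(G)\leq 2\ell^2+\ell$. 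Your appeal to a ``Liebeck-type fact'' on codimensions of \emph{arbitrary} maximal subgroups is both unproven and rendered unnecessary by this standard/non-standard split. Two further steps you flag but do not carry out are genuinely load-bearing: the twisted case, where the constant $8$ arises from comparing $SU(G/P^*_I)\leq 2\,SU(G_\sigma/P_J)$ through the twisted root-subgroup computations (Proposition~\ref{prop: bound twisted}), and the finiteness of $G/\mathrm{Soc}_d(G)$ used in both the almost simple and diagonal cases, which rests on the non-supersimplicity of a pseudo-finite field with an infinite set of automorphisms (Fact~\ref{fact:field-autom}, via Lemma~\ref{lem: red to simple}) rather than on generalities about outer automorphisms.
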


In a rough outline, our strategy has two parts.  In the first, we reduce to the analysis of permutation groups which are outright primitive, rather than merely definably so. This is where neostability theory, in the form of the theory of groups in simple theories, enters the picture. We prove that if $(G,X)$ is a definably primitive pseudo-finite permutation group of finite $SU$-rank, then $(G,X)$ is primitive if and only if the point stabiliser $G_{x}$ is infinite. Variants of one direction, showing that definable primitivity implies primitivity for permutation groups with large point stabilisers, have appeared in the literature, starting with \cite{Macpherson-Pillay1995} in the finite Morley rank context, Elwes and Ryten for measurable theories \cite[Proposition 6.1]{elwes2008measurable}, and Elwes, Jaligot, Macpherson, and Ryten in pseudo-finite finite $SU$-rank theories with elimination of the quantifier $\exists^{\infty}$ \cite[Lemma 2.6]{Elwesetal}. We build on this earlier work, making use of the theory of groups in supersimple theories, especially the `almost normaliser' and an indecomposability result. For the other direction, we apply a recent result of Smith \cite{Smith2015} that if $(G,X)$ is an infinite permutation group with $G_{x}$ finite, then $G$ must be finitely generated; we are able to rule out this possibility in our setting with a direct argument. So we prove the following.

\begin{theorem*}[Theorem \ref{th:primitive}]\label{th:primitive}Suppose $(G,X)$ is a supersimple pseudo-finite definably primitive permutation group of finite $SU$-rank. Then $(G,X)$ is primitive if and only if $G_x$ is infinite.
\end{theorem*}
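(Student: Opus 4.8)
The plan is to prove the two directions of the equivalence separately, with the interesting content concentrated in showing that an infinite point stabiliser forces primitivity. So first I would assume $G_x$ is infinite and argue that $(G,X)$ is primitive. The standard obstruction to primitivity is a nontrivial congruence, i.e. a (possibly non-definable) $G$-invariant equivalence relation $E$ on $X$ that is neither equality nor the full relation. Since $(G,X)$ is \emph{definably} primitive, any such $E$ must be non-definable; the strategy is to use the supersimple finite $SU$-rank hypothesis to manufacture a definable congruence from a non-definable one, contradicting definable primitivity. Concretely, I would let $Y = E[x]$ be the $E$-class of $x$, a nontrivial subset of $X$ (neither a singleton nor all of $X$), and consider its setwise stabiliser $H = G_{Y}$, which contains $G_x$ and so is infinite. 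The block $Y$ need not be definable, but the key move is to replace $H$ by a suitable definable approximation. Here the theory of groups in supersimple theories enters: using the almost normaliser / almost stabiliser machinery referenced in the introduction, together with the indecomposability result for groups of finite $SU$-rank, I would produce a \emph{definable} subgroup $\tilde H$ with $G_x \leqslant \tilde H$ and $\tilde H \neq G$, whose translates then yield a definable nontrivial $G$-invariant equivalence relation on $X$, contradicting definable primitivity. The hard part will be exactly this fabrication step — turning the non-definable block stabiliser into a proper definable subgroup lying strictly between $G_x$ and $G$ — and making sure the resulting definable partition is genuinely nontrivial (the translates of the definable block neither collapse to points nor cover everything).

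For the converse I would prove the contrapositive: if $G_x$ is finite, then $(G,X)$ is not primitive. Finiteness of $G_x$ means the action is, up to finite-to-one data, essentially regular, and the plan is to invoke the cited result of Smith \cite{Smith2015}: an infinite permutation group with finite point stabiliser must be finitely generated, whence $G$ itself is finitely generated. The goal is then to derive a contradiction with the pseudo-finiteness and supersimplicity hypotheses, since finite generation is extremely restrictive for an infinite pseudo-finite group of finite $SU$-rank. I would argue that such a $G$ cannot be both pseudo-finite, infinite, and finitely generated: any finitely generated pseudo-finite group would, via \L o\'s's theorem, be an ultraproduct in which a definable finite set of generators generates the whole group, and one can then produce a definable proper subgroup or a descending chain contradicting finiteness of $SU$-rank, or else contradict the stability-theoretic structure forced by supersimplicity. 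This rules out the $G_x$-finite case entirely, so primitivity \emph{fails} in that regime, giving the `only if' direction.

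Before combining the two halves I would record the preliminary reductions that both directions quietly rely on. First, because $(G,X)$ is definably primitive of finite $SU$-rank, the action is transitive (the orbits would otherwise give a definable invariant partition), so $X$ identifies definably with the coset space $G/G_x$ and $SU(X) = SU(G) - SU(G_x)$ by Lascar's inequalities; in particular $G_x$ is definable and $SU(G_x)$ is finite. Thus ``$G_x$ infinite'' is equivalent to ``$SU(G_x) \geqslant 1$'', which is the form I would actually use when extracting a definable proper subgroup in the first part. Second, I would note that in the $G_x$-finite case the finiteness is uniform across the conjugacy class of point stabilisers (all $G_y$ are conjugate, hence of the same finite size), which is what licenses treating the action as essentially regular when applying Smith's theorem.

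The step I expect to be the genuine obstacle is the forward direction's definable-block construction: everything hinges on converting the abstract, merely invariant congruence into a definable one using the almost-stabiliser and indecomposability tools, and on verifying that the proper definable subgroup produced sits strictly between $G_x$ and $G$ so that its coset partition is a nontrivial \emph{definable} congruence. The converse, by contrast, should follow cleanly once Smith's finite-generation result is in hand, modulo the short argument that an infinite pseudo-finite group of finite $SU$-rank cannot be finitely generated.
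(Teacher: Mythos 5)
Your converse direction contains a genuine gap. The reduction via Smith's theorem matches the paper, but your final step --- that an infinite, finitely generated, pseudo-finite group of finite $SU$-rank can be ruled out by a direct {\L}o\'s/definable-subgroup/descending-chain argument --- does not go through as sketched. Generation by a finite set is not a first-order property, so {\L}o\'s' Theorem tells you nothing about a ``definable finite set of generators'' in an ultraproduct; and the Indecomposability Theorem (Theorem~\ref{th:indec}) is vacuous when applied to a finite set, since the definable subgroup it produces may be trivial. Indeed, whether a finitely generated pseudo-finite group must be finite is not known in general; it is known only for special classes, and that is exactly why the paper argues structurally instead. Using Proposition~\ref{propo:socle}: if ${\rm Rad}(G)\neq 1$, then $G = A \rtimes G_x$ with $G_x$ finite, so the abelian group $A$ is finitely generated and pseudo-finite, hence finite by \cite[Proposition 3.3]{Point-Houcine2013}; if ${\rm Rad}(G)=1$, each simple component $S$ of ${\rm Soc}_d(G)$ is normal in a finite-index (hence finitely generated) subgroup $G_0 \leq G$, so $S$ is finitely generated by \cite[Lemma 2.2]{Palacin2018}, and a finitely generated simple pseudo-finite group is finite by \cite[Proposition 3.14]{Point-Houcine2013}. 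Both cases contradict infinitude of $G$. Without these inputs your contrapositive does not close.

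Your forward direction has the right skeleton --- a block gives $G_x < K < G$, and one ``definabilizes'' via indecomposability, which is exactly the paper's Lemma~\ref{lemma:primitive} --- but the step you defer as ``the hard part'' is precisely where the argument can fail, and resolving it is the real content of the paper's proof. If $G_x^k$ is commensurable with $G_x$ for all $k \in K$, then the definable subgroup produced from $\langle G_x^k : k \in K\rangle$ by Theorem~\ref{th:indec} may collapse into $G_x$, yielding no intermediate definable subgroup at all. The paper handles this by proving $\widetilde{N}_G(G_x) = G_x$: it introduces the relation $x \sim y \Leftrightarrow [G_x : G_x \cap G_y] < \infty$, definable thanks to a uniform commensurability bound coming from supersimplicity (Lemma~\ref{lem:congruencefact}); definable primitivity forces $\widetilde{N}_G(G_x) \in \{G_x, G\}$; in the case $\widetilde{N}_G(G_x) = G$, Schlichting's Theorem (Theorem~\ref{th:commensurable}) converts the uniformly commensurable family of stabilisers into a definable normal subgroup $N$ with finite orbits, contradicting the transitivity forced on $N$ by definable primitivity; and in the trivial-classes case, $N_G(G_x) = G$ makes $G_x$ a nontrivial definable normal subgroup, which would have to act transitively --- impossible for a stabiliser. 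Note where infiniteness of $G_x$ actually enters: it makes $N$ infinite and $G_x$ nontrivial in these contradictions; your sketch invokes infiniteness only to note the block stabiliser is infinite, which is not the operative point. So your plan names the correct tools, but the unexecuted dichotomy (commensurable versus non-commensurable conjugates of $G_x$) is the theorem.
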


The second part consists of a case analysis, leveraging supersimplicity and classification results for primitive pseudo-finite permutation groups to divide into a small number of situations of a reasonably transparent form. The classical O'Nan-Scott theorem gives a rough classification of finite primitive permutation groups $(G,X)$ into six categories, based both on the form of the socle of $G$ and on the shape of the action.  More recently, a remarkable classification theorem of Liebeck, Macpherson, and Tent \cite{LMT} characterises the families of finite permutation groups, in each of the O'Nan-Scott categories, whose ultraproducts are primitive. Supersimplicity winnows down the Liebeck-Macpherson-Tent catalogue even further. For example, in a supersimple structure, one cannot interpret a group which is an ultraproduct of finite classical groups of unbounded Lie ranks. A case-by-case analysis provided by the Liebeck-Macpherson-Tent result in our set-up allows us to get the bounds in Theorem~\ref{th: main}.

This paper is organised as follows. In Section~\ref{sec:prelim} we give all necessary background results. In particular, we give a version of the Liebeck-Macpherson-Tent classification which is suitable for our purposes (Theorem~\ref{thm:LMT}). Theorem~\ref{th:primitive} is proven in Section~\ref{sec:prim}, where we also give an example of a definably primitive non-primitive pseudo-finite permutation group of finite $SU$-rank, which has nontrivial (finite) point stabilisers. Then, in Section~\ref{sec:bounds}, we prove Theorem~\ref{th: main} via a case-by-case analysis provided by Theorems~\ref{thm:LMT} and~\ref{th:primitive}. Finally, in Section~\ref{sec:trank1} we note that we recover the classification of definably primitive pseudo-finite permutation groups of finite $SU$-rank acting on rank 1 sets (Theorem~\ref{th:rank1}), which is known by the main results in \cite{Elwesetal, Zou2020}.

\section{Preliminaries}\label{sec:prelim}
Throughout the paper, unless specified otherwise, definable (resp. interpretable) means with parameters. Also, when we say `a simple group' we always mean in terms of group theory (no proper normal nontrivial subgroups).

\subsection{Supersimple and finite-dimensional groups}\label{sec:findim} A group $G$ whose theory ${\rm Th}(G)$ is \emph{supersimple} is a group whose definable sets are equipped with a notion of dimension, called the \emph{$SU$-rank}, taking ordinal values. If the $SU$-ranks of the definable sets of $G$ take values in natural numbers then we say that ${\rm Th}(G)$ is of \emph{finite $SU$-rank}. We say that $G$ is of finite $SU$-rank if its theory ${\rm Th}(G)$ is. A general reference for such groups is \cite{Wagner2000}. In this paper, we mostly consider permutation groups of finite $SU$-rank but, from time to time, we also consider the wider class of \emph{finite-dimensional permutation groups with fine and additive dimension} (\cite{Wagner2020}):

A structure $M$ is called \emph{finite-dimensional} if there is a dimension function ${\rm dim}$ from the collection of all interpretable sets in models of ${\rm Th}(M)$ to $\mathbb{N} \cup \{-\infty\}$ such that, for any formula $\phi(x,y)$ and interpretable sets $X$ and $Y$, the following hold: \begin{enumerate}
\item Invariance: If $a\equiv a'$ then ${\rm dim}(\phi(x,a))={\rm dim}(\phi(x,a'))$.
\item Algebraicity: If $X \neq \emptyset$ is finite then ${\rm dim}(X)=0$, and ${\rm dim}(\emptyset)=-\infty$.
\item Union: ${\rm dim}(X \cup Y)={\rm max}\{{\rm dim}(X), {\rm dim}(Y)\}$.
\item Fibration: If $f: X \rightarrow Y$ is an interpretable map such that ${\rm dim}(f^{-1}(y)) \geqslant d$ for all $y\in Y$ then ${\rm dim}(X)\geqslant {\rm dim}(Y)+d$.
\end{enumerate} The dimension of a tuple $a$ of elements over a set $B$ is defined as
$${\rm dim}(a/B) := {\rm inf}\{ {\rm dim}(\phi(x)) : \phi \in {\rm tp}(a/B) \}.$$ We say that the dimension is \begin{itemize}
\item \emph{additive} if ${\rm dim}(a,b/C) = {\rm dim}(a/b,C) + {\rm dim}(b/C)$ holds for any
tuples $a$ and $b$ and for any set $C$; and
\item \emph{fine} if ${\rm dim}(X)=0$ implies that $X$ is finite. 
\end{itemize}

\begin{fact}[Lascar equality, see {\cite[Fact 2.7]{Karhumaki-Wagner2024}}]\label{fact:Lascar-eq}Let $G$ be a finite-dimensional group with additive and fine dimension and $H \leqslant G$ be a definable subgroup. Then ${\rm dim}(G)={\rm dim}(H)+{\rm dim}(G/H)$.\end{fact}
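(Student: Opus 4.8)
The plan is to prove the two inequalities ${\rm dim}(G)\geqslant {\rm dim}(H)+{\rm dim}(G/H)$ and ${\rm dim}(G)\leqslant {\rm dim}(H)+{\rm dim}(G/H)$ separately, using the Fibration axiom for the first and additivity of the dimension for the second; these two defining properties are exactly matched to the two bounds. The common ingredient is that every coset $gH$ has dimension ${\rm dim}(H)$: left translation $x\mapsto gx$ is an interpretable bijection of $G$ carrying $H$ onto $gH$, and any interpretable bijection preserves dimension, since applying Fibration to such a bijection and to its inverse (whose fibres are singletons, hence of dimension $0$ by Algebraicity) gives inequalities in both directions.

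For the lower bound I would consider the interpretable quotient map $\pi\colon G\to G/H$, whose fibre over a coset is exactly that coset. By the previous observation each fibre has dimension ${\rm dim}(H)$, so Fibration applied with $d={\rm dim}(H)$ yields ${\rm dim}(G)\geqslant {\rm dim}(G/H)+{\rm dim}(H)$ at once.

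For the upper bound I would pass to a generic element. Fixing a set $B$ of parameters over which $G$ and $H$ are defined, choose $g\in G$ with ${\rm dim}(g/B)={\rm dim}(G)$ and set $c=\pi(g)\in G/H$. Since $c$ is interpretable from $g$ we have ${\rm dim}(g,c/B)={\rm dim}(g/B)$, so additivity gives
\[
{\rm dim}(G)={\rm dim}(g/B)={\rm dim}(g/c,B)+{\rm dim}(c/B).
\]
Here ${\rm dim}(c/B)\leqslant {\rm dim}(G/H)$ because $c\in G/H$, while ${\rm dim}(g/c,B)\leqslant {\rm dim}(H)$ because $g$ lies in the coset $\pi^{-1}(c)$, a set interpretable over $c,B$ of dimension ${\rm dim}(H)$. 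Combining these gives ${\rm dim}(G)\leqslant {\rm dim}(H)+{\rm dim}(G/H)$, and together with the lower bound this yields the equality.

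I expect the only delicate points to be bookkeeping: checking that an interpretable bijection preserves dimension (so that cosets behave as expected) and confirming that the infimum defining ${\rm dim}(g/B)$ is genuinely attained by a generic $g$ in a sufficiently saturated model. No heavier machinery appears to be needed, as additivity and Fibration do all the real work; fineness of the dimension is used only to guarantee the good behaviour (finiteness of dimension-zero sets) underlying these translation and genericity arguments.
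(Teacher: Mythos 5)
The paper does not actually prove this statement---it is quoted verbatim from Karhum\"aki--Wagner \cite[Fact 2.7]{Karhumaki-Wagner2024}---but your argument is correct and is the standard proof of Lascar's equality: translation-invariance of the dimension (interpretable bijections preserve dimension, by Fibration applied to the bijection and its inverse) plus Fibration for the quotient map $\pi\colon G\to G/H$ give ${\rm dim}(G)\geqslant {\rm dim}(H)+{\rm dim}(G/H)$, while additivity evaluated at a generic $g$ (which exists in a sufficiently saturated model: the partial type avoiding all $B$-formulas of dimension $<{\rm dim}(G)$ is finitely satisfiable by the Union axiom, and invariance lets you transfer the conclusion back) gives the reverse inequality, with ${\rm dim}(c/gB)=0$ since $c=\pi(g)$ is interpretable over $g,B$. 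One small correction to your closing remark: fineness of the dimension is not needed anywhere in this argument---invariance, Algebraicity, Union, Fibration and additivity suffice; fineness enters elsewhere in the paper (e.g.\ to pass between dimension-zero and finite), not here.
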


\subsubsection{Commensurability and almost normalisers} Let $G$ be a group, and $H, K \leqslant G $ be subgroups. The group $H$ is said to be \emph{almost contained} in $K$, written $H \lesssim K$, if $H \cap K$ has finite index in $H$. The subgroups $H,K \leqslant G$ are \emph{commensurable} if both $H \lesssim K$ and $K \lesssim H$. A family $\mathcal{H}$ of subgroups of $G$ is \emph{uniformly commensurable} if there is $n\in \mathbb{N}$ so that $|H_1 : H_1\cap H_2| < n$ for all $H_1,H_2 \in \mathcal{H}$. Likewise, $K \leqslant G$ is uniformly commensurable to $\mathcal{H}$ if and only if $\mathcal{H}$ is uniformly commensurable and $K$ is commensurable to some (equiv.\ any) group in $\mathcal{H}$. The following result is due to Schlichting but the formulation we give here can be found in \cite[Theorem 4.2.4]{Wagner2000}.

\begin{thm}[Schlichting's Theorem]\label{th:commensurable}Let $G$ be a group and $\mathcal{H}$ be a uniformly commensurable family of subgroups of $G$. Then there is a subgroup $N$ of $G$ which is uniformly commensurable with all members of $\mathcal{H}$ and is invariant under all automorphisms of $G$ which fix $\mathcal{H}$ setwise. In fact, $N$ is a finite extension of a finite intersection of elements of $\mathcal{H}$. In particular, if $\mathcal{H}$ consists of definable subgroups, then $N$ is definable. \end{thm}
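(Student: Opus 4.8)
The plan is to prove this as a statement of pure group theory, with definability falling out at the end from the finitary nature of the construction. Let $n$ witness uniform commensurability, so $|H_1 : H_1 \cap H_2| < n$ for all $H_1, H_2 \in \mathcal{H}$. First I would record the easy structural facts: any two members are commensurable, so all of $\mathcal{H}$ lies in a single commensurability class, and any finite intersection $D = H_1 \cap \dots \cap H_k$ of members is again commensurable with every member (by transitivity of commensurability), with $|D : D \cap H| < n$ for every $H \in \mathcal{H}$. The last point holds because $D \leq H_1$, so the coset map $D/(D \cap H) \hookrightarrow H_1/(H_1 \cap H)$ is injective and hence $|D : D \cap H| \leq |H_1 : H_1 \cap H| < n$. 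Thus the finite intersections form a downward-directed, uniformly commensurable family on which any automorphism $\sigma$ of $G$ with $\sigma(\mathcal{H}) = \mathcal{H}$ acts, since $\sigma$ permutes $\mathcal{H}$ and hence permutes finite intersections of its members.

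The heart of the argument is to extract from this data a single canonical subgroup $N$. The naive candidates both fail: the full intersection $\bigcap_{H \in \mathcal{H}} H$ can have infinite index in the members (the intersection of all hyperplanes of an infinite-dimensional $\mathbb{F}_2$-space is trivial), while the join $\langle \mathcal{H} \rangle$ can be far too large to be commensurable with them (think of the normal closure of a self-commensurating subgroup). The correct object sits in between and is produced by a Schlichting/Bergman--Lenstra-style balancing argument driven by the bound $n$. Concretely, I would study the integer $|D : D \cap H|$ as $D$ ranges over finite intersections and $H$ over $\mathcal{H}$, and single out the finite intersections that are extremal, minimising the worst-case index $\max_{H \in \mathcal{H}} |D : D \cap H|$; the uniform bound forces these indices into the finite set $\{1, \dots, n-1\}$, and a counting lemma --- only finitely many subgroups of bounded index lie above a fixed finite-index subgroup --- forces the collection of extremal pieces to be finite, of size bounded by a function of $n$. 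Taking $N$ to be the subgroup generated by (equivalently, a suitable meet/join over) this finite canonical collection, one checks that $N$ is commensurable with every member and, being assembled from finitely many members by finitely many group operations, contains a finite intersection $D_0 = H_{i_1} \cap \dots \cap H_{i_m}$ of finite index. That is, $N$ is a finite extension of a finite intersection of elements of $\mathcal{H}$, exactly as claimed.

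Invariance is then essentially formal: since the whole construction of $N$ is phrased intrinsically in terms of $\mathcal{H}$ (commensurability, the extremal-index condition, and generation are all preserved by group automorphisms), any $\sigma \in \mathrm{Aut}(G)$ with $\sigma(\mathcal{H}) = \mathcal{H}$ carries the canonical finite collection to itself and hence fixes $N$ setwise. Definability follows immediately: a finite intersection of definable subgroups is definable, and a finite-index extension of a definable subgroup --- generated by finitely many cosets --- is again definable, so if the members of $\mathcal{H}$ are definable then so is $N$. The main obstacle, and the only genuinely non-formal step, is the finiteness established in the middle paragraph: arranging a canonical subgroup that is simultaneously invariant (which pushes $N$ to be large and symmetric) and a finite extension of a finite intersection (which pushes $N$ to be tightly controlled). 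Reconciling these two pressures --- so that the total intersection's possibly infinite index and the join's possible excessive size are both avoided --- is where the uniform bound $n$ must be used in an essential, quantitative way, and is the technical core of Schlichting's theorem.
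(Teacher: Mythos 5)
A preliminary remark: the paper does not prove this statement at all --- it is quoted from the literature (the result is due to Schlichting; the formulation is \cite[Theorem 4.2.4]{Wagner2000}, and the modern proof goes back to Bergman--Lenstra) --- so your proposal can only be compared against the standard proof, and it contains a genuine gap there.

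The gap is exactly in your middle paragraph, and it is not merely an omission: the two claims it rests on are false as stated. Take $G = V$ an infinite-dimensional vector space over $\mathbb{F}_2$ and $\mathcal{H}$ the family of all hyperplanes (kernels of nonzero linear functionals). Then $|H_1 : H_1 \cap H_2| \leq 2$ for all $H_1, H_2 \in \mathcal{H}$, so $\mathcal{H}$ is uniformly commensurable with $n = 3$; but $|H_1 : H_1 \cap \cdots \cap H_k| = 2^{k-1}$ for members in general position, so the finite intersections do \emph{not} form a uniformly commensurable family. Your one-sided estimate $|D : D \cap H| < n$ is correct, but the opposite index $|H : H \cap D|$ grows with the number of intersectands, and bounding it is essentially the content of the theorem, not a recordable easy fact. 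Worse, in this example every finite intersection $D$ is extremal for your minimisation: $D$ has finite codimension, hence $D \neq 0$, hence some hyperplane in the family fails to contain $D$ and $\max_{H \in \mathcal{H}} |D : D \cap H| = 2$, while the value $1$ is never attained since it would force $D \subseteq \bigcap \mathcal{H} = 0$. So the extremal collection is the set of \emph{all} finite intersections --- infinite, with trivial total intersection --- and your counting lemma (``only finitely many subgroups of bounded index lie above a fixed finite-index subgroup'') has no fixed finite-index subgroup to stand on; its hypothesis is precisely what fails. The claimed finiteness of the canonical collection is therefore refuted, and with it your derivation that $N$ is a finite extension of a finite intersection. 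A secondary hole: even granting a finite extremal collection, the subgroup generated by finitely many pairwise commensurable subgroups need not be commensurable with them (e.g.\ $\mathrm{SL}_2(\mathbb{Z})$ and a suitable conjugate inside $\mathrm{SL}_2(\mathbb{Q})$ are commensurable but generate a group in which each has infinite index), so ``assembled from finitely many members by finitely many group operations'' does not by itself yield finite index; a quantitative join lemma using the bound $n$ is needed even there.

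For comparison, the genuine proofs do not extract a finite canonical subcollection at all. The Schlichting/Bergman--Lenstra argument makes an extremal choice to \emph{stabilise} the relevant indices and then takes the join over a possibly infinite, canonically determined family, with the real labour going into a quantitative lemma showing that this join remains uniformly commensurable with the members; the ``finite extension of a finite intersection'' conclusion emerges from that analysis, not from finiteness of the generating family. Instructively, in the hyperplane example the correct $N$ is the join $V$ of your (infinite) extremal collection: it is invariant and is an index-$2$ extension of the single member $H_1$. Your outer layers --- the injection $D/(D \cap H) \hookrightarrow H_1/(H_1 \cap H)$, invariance-by-canonicity, and definability of $N$ as a finite union of cosets of a definable finite intersection --- are all fine; but as written the proposal assumes the theorem's technical core rather than proving it, and the specific mechanism you propose for that core is contradicted by the example above.
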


\begin{defn}Let $G$ be a group, and $H, K \leqslant G $. Then the subgroup $\widetilde{N}_K(H)=\{k\in K : H \text{ is commensurable with }H^k\}$ is the \emph{almost normaliser} of $H$ in $K$. 

The commensurability is \emph{uniform} in $\widetilde{N}_K(H)$ if there is some $m\in \mathbb{N}$ so that if $H \text{ is commensurable with } H^k$ then $|H : H\cap H^k| < m$.
\end{defn}

It is easy to see (\cite[Lemma 2.8]{Karhumaki-Wagner2024}) that any finite-dimensional group with additive and fine dimension (so, in particular, any group which is of finite $SU$-rank) satisfies the following chain condition:

\begin{enumerate}[icc$^0$:]
\item Given a family $\mathcal{H}$ of uniformly definable subgroups of $G$, there is $m < \omega$ so
that there is no sequence $\{H_i : i \leqslant m\} \subset \mathcal{H}$ with $|\bigcap_{i < j}H_i : \bigcap_{i \leqslant j}H_i |\geqslant m$ for all $j \leqslant m$.
\end{enumerate} 

If $G$ is a group satisfying the icc$^0$-condition and $H,K \leqslant G$ are definable, then the commensurability in $\widetilde{N}_K(H)$ is uniform (if necessary, see \cite[Corollary 2.9]{Karhumaki-Wagner2024}); thus $\widetilde{N}_K(H)$ is a definable subgroup of $G$. In particular, such $G$ satisfies the icc$^0$-condition for centralisers and is therefore an \emph{$\widetilde{\mathfrak{M}}_c$-group}:
\begin{defn}A group $G$ satisfies the \emph{$\widetilde{\mathfrak{M}}_c$-condition} if there is $m< \omega$ such that there are no $(g_i: i \leqslant m)$ in $G$ so that $|C_G(g_j: j < i): C_G(g_j: j \leqslant i)|\geqslant m$ for all $i \leqslant m$. If $G$ satisfies the $\widetilde{\mathfrak{M}}_c$-condition then we say that $G$ is an \emph{$\widetilde{\mathfrak{M}}_c$-group}.\end{defn}

\subsection{Simple and semi-simple pseudo-finite groups}\label{sec:semi-simple}

 We denote by $\mathcal{L}_{gp}$ the language of groups.

\begin{defn}A \emph{pseudo-finite} group is an infinite group which satisfies every first-order sentence of $\mathcal{L}_{gp}$ that is true of all finite groups. More generally, an infinite $\mathcal{L}$-structure $M$ is pseudo-finite if whenever $M \models \phi$ for an $\mathcal{L}$-sentence $\phi$, there is some finite $\mathcal{L}$-structure $A$ with $A \models \phi$.  \end{defn}

Note that by \L o\'s' Theorem, an infinite group (resp.\ $\mathcal{L}$-structure) is pseudo-finite if and only if it is elementarily equivalent to a nonprincipal ultraproduct of finite groups ($\mathcal{L}$-structures) of increasing orders.

\begin{rem}Some authors (although virtually none of the sources referenced in this paper) allow pseudo-finite groups to be finite. We require that a pseudo-finite group is infinite. When we write `a (pseudo-)finite group' we mean a group that is either finite or pseudo-finite.\end{rem}

Let $\mathcal{L}_{\mathrm{perm}}$ be the language containing two sorts $G$ and $X$ together with the language of groups on the sort $G$ and a binary function $\cdot : G \times X \to X$. Permutation groups $(G,X)$ may be viewed naturally as $\mathcal{L}_{\mathrm{perm}}$-structures. In what follows, we will assume that all of our structures are definable in some $\mathcal{L}$-structure, where $\mathcal{L}$ is some fixed language extending $\mathcal{L}_{\mathrm{perm}}$. If a permutation group $(G,X)$ is definable in any structure, then the ambient structure has a definitional expansion to a language containing $\mathcal{L}_{\mathrm{perm}}$, so this places no restriction on the scope of our results. Unless otherwise specified, \emph{definable} means definable (possibly with parameters) in the language $\mathcal{L}$.


Using the classification of finite simple groups, the simple pseudo-finite groups have been classified. The following structure theorem mostly follows from the work in \cite{Wilson1995} (Wilson \cite{Wilson1995} proved that a simple pseudo-finite group is elementarily equivalent to a (twisted) Chevalley group $X(F)$ over a pseudo-finite field $F$ and results of Point \cite{Point1999} allow one to conclude that a simple group is pseudo-finite if and only if it is elementarily equivalent to $X(F)$. Ryten \cite[Chapter 5]{Ryten2007} has generalised this by showing that `elementarily equivalent' can be strengthened to `isomorphic'. In particular, he proved the moreover part of the following result.).

\begin{thm}[Wilson \cite{Wilson1995}, Ryten \cite{Ryten2007}]\label{th:wilson} A simple group is
pseudo-finite if and only if it is isomorphic to a (twisted)
Chevalley group over a pseudo-finite field. 

Moreover, the field (in the language of rings) and the group (in the language of groups) are bi-interpretable, and this bi-interpretation is uniform in the Lie type of the group. 
\end{thm}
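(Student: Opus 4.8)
The plan is to prove the two directions separately and then address the bi-interpretability, treating the forward implication as the substantial one. For the easy direction, suppose $X(F)$ is a (twisted) Chevalley group over a pseudo-finite field $F$. Writing $F$ as elementarily equivalent to a nonprincipal ultraproduct $\prod_i \mathbb{F}_{q_i}/\mathcal{U}$ of finite fields, and using that the root-datum equations cutting out $X$ are a uniform interpretation in the field (so the interpretation commutes with the ultraproduct), one gets $X(F) \equiv \prod_i X(\mathbb{F}_{q_i})/\mathcal{U}$; hence $X(F)$ is pseudo-finite. Its simplicity follows from the classical simplicity of Chevalley and twisted Chevalley groups over infinite fields, the small finite-field exceptions being irrelevant since $F$ is infinite.

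For the forward direction, let $G$ be a simple pseudo-finite group. By \L o\'s' Theorem, $G \equiv \prod_i K_i/\mathcal{U}$ for finite groups $K_i$. The first step is to arrange that the $K_i$ may be taken finite simple and that $G$ is \emph{uniformly} simple, i.e.\ that there is a fixed $n$ such that every element is a product of at most $n$ conjugates of any nontrivial element; this last property, unlike plain simplicity, is a first-order schema, and one extracts it from the simplicity of $G$ by a compactness argument, after which it reflects into the factors $K_i$. The second step applies CFSG to the $K_i$, splitting them into cyclic of prime order, alternating, sporadic, and of Lie type. The abelian and sporadic families are discarded at once (the former is incompatible with $G$ being nonabelian simple, the latter would force $G$ finite). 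The crucial step is to eliminate the alternating groups and the groups of Lie type of unbounded Lie rank. Here one invokes that uniform simplicity is equivalent to a uniform bound on covering numbers, together with covering-number estimates (themselves resting on CFSG) showing that no such bound exists for alternating groups of unbounded degree or for groups of Lie type of unbounded rank. What remains is a family of bounded Lie rank, so only finitely many Lie types occur; a set in $\mathcal{U}$ then carries a single fixed type $X$, and $\prod_i X(\mathbb{F}_{q_i})/\mathcal{U} \cong X(F)$ for the pseudo-finite field $F = \prod_i \mathbb{F}_{q_i}/\mathcal{U}$, giving $G \equiv X(F)$.

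I expect the elimination step to be the main obstacle: controlling covering numbers uniformly across the CFSG families is where the genuine group-theoretic content sits, and it is precisely what ties the theorem to the classification. The reduction to finite simple factors and the passage from bounded rank to a single type over a pseudo-finite field are comparatively routine once uniform simplicity is in hand.

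For the moreover part, I would recover the field interpretably from the group, uniformly in the Lie type, via the standard reconstruction of $F$ from the action of a maximal torus on the root subgroups (equivalently, from a Borel or minimal parabolic), and I would recover the group from the field by its defining equations; together these yield the claimed bi-interpretation, uniform in Lie type. To upgrade elementary equivalence to isomorphism following Ryten, one combines this bi-interpretation with the fact that the associated pseudo-finite field and the Lie-type data are themselves definably recovered, so that a simple pseudo-finite group both determines, and is determined up to isomorphism by, the pair consisting of its Lie type and its field.
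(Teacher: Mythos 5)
Your overall architecture (reduce to an ultraproduct of finite simple groups, split by CFSG, eliminate the alternating and unbounded-Lie-rank families via covering numbers, then invoke Point-style commuting of the fixed-type interpretation with ultraproducts to land on $X(F)$) is the right shape, but there is a genuine gap at the crux: the claim that uniform (bounded) simplicity can be ``extracted from the simplicity of $G$ by a compactness argument.'' Simplicity is not first-order and is not reflected by elementary equivalence, and compactness cannot upgrade it: if for each $n$ there are $a_n \neq 1$ and $b_n$ with $b_n$ not a product of $n$ conjugates of $a_n^{\pm 1}$, then the type asserting a common witness is realised only in a proper elementary \emph{extension} of $G$; this shows some elementary extension fails to be simple, but contradicts nothing about $G$ itself. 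Indeed ``simple $\Rightarrow$ uniformly simple'' is false in general --- the finitary alternating group on $\mathbb{N}$ is simple, yet elements of large support are not bounded products of conjugates of a fixed $3$-cycle --- and for pseudo-finite simple groups, bounded simplicity is a \emph{consequence} of Wilson's theorem (via width bounds for groups of fixed Lie type), not an input available in advance. Assuming it at the outset is circular: it is exactly the step that carries the theorem's content, since once the schema ``every element is a product of at most $n$ conjugates of any nontrivial element'' holds in $G$, \L{}o\'s' Theorem instantly forces the finite factors to be simple with covering number at most $n$, killing the alternating and unbounded-rank families for free.

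This is precisely the subtlety the paper flags in Remark~\ref{rem:alternating}: a nonprincipal ultraproduct of alternating groups is \emph{definably} simple but not simple, so no transfer of first-order sentences rules out, a priori, a simple group elementarily equivalent to it; in the paper's setting this is excluded only by invoking finite $SU$-rank, where definably simple non-abelian groups are simple via the Indecomposability Theorem (Theorem~\ref{th:indec}, used in the proof of Lemma~\ref{lem:socle}). Accordingly, the paper does not reprove the statement at all: it quotes Wilson (whose elimination of the bad CFSG families does not pass through bounded simplicity of $G$), Point (needed for the converse direction and for the identification $\prod_i X(\mathbb{F}_{q_i})/\mathcal{U} \cong X(F)$, which you use tacitly in your last step), and Ryten (isomorphism rather than elementary equivalence, and the uniform bi-interpretation; your sketch of recovering $F$ from the action on a root subgroup is consistent with his method as described in Lemma~\ref{lem: field aut}). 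Your easy direction is essentially fine, modulo the standard caveat that for the Suzuki and Ree groups the relevant structure is a difference field $(F,\sigma)$ rather than the bare field.
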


\begin{rem}\label{rem:alternating}In order to prove the classification above, Wilson showed that a simple pseudo-finite group is elementarily equivalent to a nonprincipal ultraproduct of finite simple groups, and then applied the classification of finite simple groups. In such treatment, the finitely many sporadic groups can be ignored as pseudo-finite groups are infinite. An important fact is that a nonprincipal ultraproduct of alternating groups, say $H$, which is of course definably simple (i.e. has no proper nontrivial definable normal subgroups), is not outright simple since finite alternating groups contain 3-cycles, and
elements of increasingly large support, when written as products of 3-cycles, require
increasingly many 3-cycles. Since in the finite $SU$-rank context non-abelian definably simple groups are actually simple (see the proof of Lemma~\ref{lem:socle} below) we know that a group elementarily equivalent to $H$ cannot be of finite $SU$-rank. This plays a key role in our paper.
\end{rem}

If the largest soluble normal subgroup of a group $G$ exists, then it is called the \emph{soluble radical of }$G$, which is denoted by ${\rm Rad}(G)$. We say that $G$ is \emph{semi-simple} if it has no nontrivial abelian normal subgroup. 
Note that if $A$ is a nontrivial normal abelian subgroup of $G$ and $1\not=a\in A$, then, $Z(C_G(a^G))$ is a definable nontrivial normal abelian subgroup. Thus, semi-simplicity is the same as having no definable nontrivial normal abelian subgroup. By Wilson \cite{Wilson2009}, within the class of finite groups, the soluble radical is \emph{uniformly definable}; this means that there is a parameter free $\mathcal{L}_{gp}$-formula $\phi_R(\bar{x})$ so that, given a pseudo-finite group $G \equiv \prod_{i\in I}G_i/\mathcal{U}$, there is a definable normal subgroup $\phi_R(G) \equiv \prod_{i\in I}\phi_R(G_i)/\mathcal{U}=\prod_{i\in I}{\rm Rad }(G_i)/\mathcal{U}$. Clearly $G$ is semi-simple if and only if $\phi_R(G)=1$. Note however that Wilsons' result depends on the classification of finite simple groups. The following result is independent from CFSG and replaces the use of $\phi_R(\bar{x})$ in this paper:

\begin{thm}[{\cite[Theorem 2.12]{Karhumaki-Wagner2024}}]\label{thm:K-W}If $G$ is finite-dimensional with fine and additive dimension then the soluble radical ${\rm Rad}(G)$ exists and is definable.
\end{thm}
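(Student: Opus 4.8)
The plan is to build $\mathrm{Rad}(G)$ in two stages: first isolate its connected part by a dimension-maximality argument, and then show that only a finite piece survives once that connected part is factored out. Throughout I use that $G$, being finite-dimensional with fine and additive dimension, satisfies Lascar's equality (Fact~\ref{fact:Lascar-eq}), the $\mathrm{icc}^{0}$-condition, and is therefore an $\widetilde{\mathfrak{M}}_c$-group, and that such groups admit a definable connected component $G^{0}$ and, more generally, every definable subgroup $H$ has a definable connected component $H^{0}$ \cite{Wagner2020}; here a definable group is \emph{connected} if it has no proper definable subgroup of finite index, and by fineness $\dim(K)=0$ forces $K$ finite. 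For the first stage I consider the family of definable connected soluble normal subgroups of $G$; it contains $1$, and the dimensions of its members are integers bounded by $\dim(G)$, so a member $R$ of maximal dimension exists. This $R$ contains every definable connected soluble normal subgroup $S$: the set $RS$ is again definable (the image of $R\times S$), connected (it contains the connected subgroups $R,S$, so $RS=\langle R,S\rangle\leq (RS)^{0}$), normal, and soluble (a product of two normal soluble subgroups is soluble), so maximality gives $\dim(RS)=\dim(R)$; then Fact~\ref{fact:Lascar-eq} and fineness make $R$ a finite-index subgroup of the connected group $RS$, whence $RS=R$ and $S\leq R$. The same computation shows $R$ is unique.

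\textbf{Passing to the quotient.} Set $\bar G=G/R$, again finite-dimensional with fine additive dimension. A preimage argument shows $\bar G$ has \emph{no} nontrivial definable connected soluble normal subgroup: such a $\bar T$ lifts to a definable soluble normal subgroup $T^{\ast}\geq R$ with $(T^{\ast})^{0}$ definable, connected, soluble and normal in $G$; since $R\leq (T^{\ast})^{0}$ while $\dim (T^{\ast})^{0}\leq\dim R$ by the first stage, we get $R=(T^{\ast})^{0}$, so $\bar T=T^{\ast}/R$ is finite and connected, hence trivial. I then show every soluble normal subgroup $\bar S\trianglelefteq\bar G$ is finite, by induction on derived length. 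In the abelian base case the double centraliser $H=C_{\bar G}(C_{\bar G}(\bar S))$ is a definable abelian normal subgroup containing $\bar S$ — definability of these centralisers of a possibly non-definable set is precisely what the $\widetilde{\mathfrak{M}}_c$-condition provides — so $H^{0}$ is a definable connected abelian normal subgroup, hence trivial, and $H$ is finite. For the inductive step, the last nontrivial term $A$ of the derived series of $\bar S$ is abelian and characteristic in $\bar S$, hence normal in $\bar G$ and finite by the base case; quotienting by the finite normal subgroup $A$ preserves the absence of a connected soluble radical, so $\bar S/A$ is finite by induction and therefore so is $\bar S$.

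\textbf{The finite part and descent.} To prevent the soluble normal subgroups of $\bar G$ from accumulating, I bound them all inside a single finite subgroup. The connected component $\bar G^{0}$ acts by conjugation on any finite normal subgroup $F$ through a homomorphism with finite image and definable finite-index kernel; connectedness forces the kernel to be all of $\bar G^{0}$, so $\bar G^{0}$ centralises $F$ and $F\leq C:=C_{\bar G}(\bar G^{0})$. Now $C\cap\bar G^{0}=Z(\bar G^{0})$ is definable, its connected component is a definable connected abelian normal subgroup (hence trivial), so $Z(\bar G^{0})$ is finite; since $[\bar G:\bar G^{0}]$ is finite, $C/(C\cap\bar G^{0})$ embeds in $\bar G/\bar G^{0}$ and $C$ is finite. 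By the previous paragraph every soluble normal subgroup of $\bar G$ is finite, hence lies in $C$, so $W:=\langle\,\text{soluble normal subgroups of }\bar G\,\rangle\leq C$ is finite, is a finite product of normal soluble subgroups (so soluble), and is the largest soluble normal subgroup of $\bar G$; thus $W=\mathrm{Rad}(\bar G)$ is finite and definable. Finally $\mathrm{Rad}(G)$ is the preimage of $W$ under $G\to\bar G$: it is definable, normal, and an extension of the soluble group $R$ by the soluble group $W$, hence soluble, and any soluble normal subgroup of $G$ maps into $W$, so it is contained in $\mathrm{Rad}(G)$. This yields both existence and definability.

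\textbf{Main obstacle.} The genuinely delicate point is the finite part: a priori the finite soluble normal subgroups of $\bar G$ could have unbounded order, and then their join need not be soluble. What rescues the argument is that, once the connected soluble radical has been removed, the centraliser $C_{\bar G}(\bar G^{0})$ of the connected component is \emph{finite}, which simultaneously bounds all finite normal subgroups and makes their join manifestly soluble. Turning every definability assertion into a rigorous one — above all the definability of centralisers of infinite, possibly non-definable, subgroups appearing in the double-centraliser envelopes — is exactly where the $\widetilde{\mathfrak{M}}_c$-condition (equivalently the $\mathrm{icc}^{0}$-condition, via Schlichting's Theorem~\ref{th:commensurable}) is indispensable.
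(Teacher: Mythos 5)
The paper itself does not prove this statement --- it is quoted from Karhum\"aki--Wagner --- so your attempt can only be judged on its own terms, and unfortunately it has a fatal gap: the whole architecture rests on the assumption that $G$ and its definable subgroups admit \emph{definable} connected components of finite index ($G^{0}$, $H^{0}$, $(T^{\ast})^{0}$, $\bar{G}^{0}$). That is a DCC phenomenon, available in the finite Morley rank setting, but it fails for finite-dimensional groups with fine and additive dimension. The group $(\mathbb{Z},+)$, with $\dim$ equal to $U$-rank, satisfies all four axioms (fine and additive), yet the chain $\mathbb{Z} > 2\mathbb{Z} > 4\mathbb{Z} > \cdots$ shows it has no definable connected component; note that ${\rm icc}^{0}$ is not violated, since $\{n\mathbb{Z} : n \in \mathbb{N}\}$ is not a \emph{uniformly} definable family. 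Running your argument on $G = \mathbb{Z}$ exposes the break concretely: the only definable connected soluble normal subgroup is trivial, so your stage one gives $R = 1$ and $\bar{G} = \mathbb{Z}$; your abelian base case then takes $H = C_{\bar{G}}(C_{\bar{G}}(\bar{S})) = \mathbb{Z}$ and concludes ``$H^{0}$ is a definable connected abelian normal subgroup, hence trivial, so $H$ is finite'' --- i.e.\ that ${\rm Rad}(\bar{G})$ is finite --- whereas ${\rm Rad}(\mathbb{Z}) = \mathbb{Z}$. The inference ``connected component trivial $\Rightarrow$ finite'' needs $[H : H^{0}] < \infty$ with $H^{0}$ definable, and that is precisely what this setting does not supply. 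The same unjustified input reappears in the quotient step ($(T^{\ast})^{0}$) and throughout the descent ($\bar{G}^{0}$ centralising finite normal subgroups, $C/(C \cap \bar{G}^{0}) \hookrightarrow \bar{G}/\bar{G}^{0}$), so the collapse is not local but structural. The theorem as stated carries no pseudo-finiteness hypothesis, so this counterexample is squarely in scope.

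A secondary inaccuracy: the $\widetilde{\mathfrak{M}}_{c}$-condition does not make centralisers of arbitrary (possibly non-definable) subsets definable; it makes them commensurable with centralisers of finite subsets, i.e.\ definable only up to finite index, and turning that into an actual definable envelope requires an extra application of Schlichting's Theorem~\ref{th:commensurable}. That part is repairable, but it signals the right direction: the known proofs in this generality (Milliet for finite $SU$-rank, Karhum\"aki--Wagner in the finite-dimensional setting cited here) work with uniform commensurability, almost normalisers, and Schlichting-type definable envelopes of soluble subgroups precisely \emph{because} connected components are unavailable. Any successful argument for this theorem has to replace your connected-component dichotomy (``connected part of maximal dimension, plus a finite remainder'') with machinery of that kind.
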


Let $G$ be a semi-simple pseudo-finite group of finite $SU$-rank. Below, using standard methods, we observe that a $G$-minimal definable normal subgroup $N$ of $G$ exists (i.e. an infinite definable normal subgroup $N$ of $G$ which is minimal with respect to these properties). We may then coherently define the \emph{$d$-socle} ${\rm Soc}_d(G)$ of $G$ as the subgroup generated by all definably minimal (i.e. finite or $G$-minimal) normal subgroups of $G$.

Recall that the \emph{socle} ${\rm Soc}(H)$ of a \emph{finite} group $H$ is the subgroup generated by all minimal normal nontrivial subgroups.

\begin{lem}\label{lem:socle}Suppose $G\equiv \prod_{i\in I}G_i/\mathcal{U}$ is a pseudo-finite group of finite $SU$-rank. If ${\rm Rad}(G)=1$ then ${\rm Soc}_d(G) \neq 1$ exists and is elementarily equivalent to $\prod_{i\in I}{\rm Soc}(G_i)/\mathcal{U}$. Further, ${\rm Soc}_d(G)$ is a finite direct product $$M_1 \times \cdots \times M_j$$ of minimal definable normal subgroups of $G$, and, for each $r\in \{1,\ldots, j\}$, we have $$
        M_{r} = S_{r,1} \times \cdots  \times S_{r,\ell_{r}}
        $$ where $S_{r,k}$ are all isomorphic non-abelian simple (pseudo-)finite groups as $r$ varies and $k\in\{1, \ldots , \ell_r\}$, and, at least one of the groups $M_{r}$ is infinite. 
\end{lem}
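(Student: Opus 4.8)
The plan is to build \({\rm Soc}_d(G)\) out of finitely many \(G\)-minimal definable normal subgroups, to read off their structure by descending to the finite factors \(G_i\), and to locate an infinite piece at the end. First I would produce a \(G\)-minimal definable normal subgroup. As \(G\) is infinite and semi-simple it is itself an infinite definable normal subgroup, so this family is nonempty; and it satisfies a chain condition because \(G\) has finite \(SU\)-rank (the \(SU\)-rank of a definable subgroup is a natural number that drops strictly along infinite-index inclusions, while the \(\widetilde{\mathfrak{M}}_c\)/icc\(^{0}\) condition bounds finite-index descending chains). Choosing a member of least \(SU\)-rank and then minimal under almost-containment yields a \(G\)-minimal \(N\). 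To analyse such \(N\), I would descend to the factors: since \({\rm Rad}(G)=1\), Wilson's uniformly definable radical — or, avoiding CFSG, Theorem~\ref{thm:K-W} — gives \({\rm Rad}(G_i)=1\) for \(\mathcal U\)-almost every \(i\), so I may assume each \(G_i\) is finite semisimple.

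\emph{Structure of the minimal normal subgroups (the crux).} For finite semisimple \(G_i\), \({\rm Soc}(G_i)\) is the direct product of its minimal normal subgroups, each a product of \(G_i\)-conjugate, pairwise isomorphic nonabelian simple factors. By \L o\'s' theorem every \(G\)-minimal definable normal subgroup is, in the limit, a product \(S_{1} \times \cdots \times S_{\ell}\) of groups \(S_{k} = \prod_i T_{i,k}/\mathcal U\) that are ultraproducts of finite simple groups. The main obstacle is to show each \(S_k\) is \emph{genuinely} simple and that \(\ell\) and the number of minimal normals are finite. For simplicity I would use CFSG together with Wilson--Ryten (Theorem~\ref{th:wilson}): the \(T_{i,k}\) are of (twisted) Chevalley or alternating type, and both the alternating case and the unbounded-Lie-rank Chevalley case fail to have finite \(SU\)-rank — the alternating exclusion being exactly Remark~\ref{rem:alternating} — leaving bounded-rank Chevalley groups, which are simple. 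The same finite-\(SU\)-rank constraint, via additivity of dimension (Fact~\ref{fact:Lascar-eq}), bounds the number of infinite factors and rules out unbounded direct powers of a fixed finite simple group, giving finiteness of \(\ell\) and of the number of minimal normal subgroups.

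\emph{Assembly.} Distinct definably minimal normal subgroups meet in a subgroup too small to be minimal and hence trivial, and a short argument using \(G\)-minimality (if the intersection is finite then the pointwise centraliser has finite index and so, by minimality, equals the whole factor) shows they centralise one another; so the subgroup they generate is their direct product. By the finiteness just obtained this is a finite product \(M_1 \times \cdots \times M_j\), which is therefore definable and equals \({\rm Soc}_d(G)\); matching it factor-by-factor against the finite socles gives \({\rm Soc}_d(G) \equiv \prod_i {\rm Soc}(G_i)/\mathcal U\), and each \(M_r\) has the stated form \(S_{r,1}\times\cdots\times S_{r,\ell_r}\).

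\emph{An infinite factor.} Finally, in a semisimple group \(C_G({\rm Soc}_d(G)) = 1\): it is a definable normal subgroup, and any nontrivial (finite or infinite) part would produce, via minimality, an abelian normal subgroup, contradicting \({\rm Rad}(G)=1\). Hence conjugation embeds \(G\) into \({\rm Aut}({\rm Soc}_d(G))\). If every \(M_r\) were finite then \({\rm Soc}_d(G)\) and its automorphism group would be finite, forcing \(G\) finite and contradicting pseudo-finiteness; so at least one \(M_r\) is infinite. The delicate part throughout is the second step: extracting genuine simplicity of the factors from definable simplicity and bounding their number, both of which rest on excluding alternating and unbounded-rank phenomena in finite \(SU\)-rank.
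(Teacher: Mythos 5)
Your overall architecture matches the paper's (transfer ${\rm Rad}(G_i)=1$ to the finite factors, decompose the finite socles, exclude alternating and unbounded-Lie-rank phenomena by finite $SU$-rank, and get the infinite factor from $C_{G_i}({\rm Soc}(G_i))=1$ and the embedding $G_i \hookrightarrow {\rm Aut}({\rm Soc}(G_i))$ --- this last step is exactly the paper's). But there is a genuine gap at the counting step, which you yourself flag as the crux. You claim that additivity of dimension (Fact~\ref{fact:Lascar-eq}) ``bounds the number of infinite factors and rules out unbounded direct powers of a fixed finite simple group.'' Additivity does the first but cannot do the second: finite subgroups have rank $0$ and are invisible to any rank computation, so if $M_{r,i}=T^{m_i}$ for a fixed finite simple $T$ with $m_i \to \infty$ (or if the number $j_i$ of finite minimal normal subgroups of $G_i$ is unbounded), no contradiction with $SU(G)<\omega$ arises from Lascar equality. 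Note that the lemma explicitly allows finite factors $M_r$, asserting only that at least one is infinite, so this case cannot be waved away. The paper bounds $j_i$ and $\ell_i$ by an entirely different mechanism: the $\widetilde{\mathfrak{M}}_c$-condition (which finite $SU$-rank guarantees) applied via \cite[Lemma 4.1]{Zou2020} to the pairwise commuting, centerless factors --- a centraliser chain-condition argument that handles finite and infinite factors uniformly. Your proof needs this (or an equivalent icc$^0$ argument) and cannot be completed with additivity alone.

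Two secondary problems. First, your descent step ``by \L o\'s' theorem every $G$-minimal definable normal subgroup is, in the limit, a product $S_1\times\cdots\times S_\ell$ of ultraproducts of finite simple groups'' is unjustified as stated: $G$-minimality is not expressed by a single sentence, so it does not transfer to the coordinates, and a normal subgroup of a finite semisimple group need not be a product of simple groups (it can sit strictly between a product of socle factors and its normaliser, inducing outer automorphisms). The paper avoids this by working upward from the finite socles --- whose decomposition into characteristically simple pieces is given by finite group theory --- and only then invoking the Indecomposability Theorem (Theorem~\ref{th:indec}) to obtain definability of the $M_r$ and $S_{r,k}$ and to upgrade non-abelian definable simplicity to genuine simplicity. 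Second, your citation of Remark~\ref{rem:alternating} for the alternating exclusion is circular: that remark itself defers to the proof of this very lemma (i.e.\ to the Indecomposability-based fact that definably simple non-abelian groups of finite $SU$-rank are simple). The repair is to invoke Theorem~\ref{th:indec} directly --- which your proposal never does, even though it is the engine behind both the simplicity and the definability claims in the paper's proof.
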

\begin{proof}Below we argue modulo $\mathcal{U}$ even if implicitly.

The socle ${\rm Soc}(G_i)$ of the finite group $G_i$ is a direct product of $j_i$ many characteristically simple groups $M_{r,i}$ for $r = 1, \ldots, j_{i}$. Since ${\rm Rad}(G)=1$ (and hence ${\rm Rad}(G_i)=1$ by \L o\'s' Theorem and the uniform definability of the soluble radical) the characteristically simple groups $M_{r,i}$ are products of simple non-abelian groups. As $G$ is an $\widetilde{\mathfrak{M}}_c$-group, and the groups $M_{r,i}$ are centerless and commute pairwise, we have by \cite[Lemma 4.1]{Zou2020} that the numbers $j_i$ are bounded and hence constant modulo $\mathcal{U}$. Similarly, each $M_{r,i}$ is a product of $\ell_i$ many isomorphic simple non-abelian groups $S_{r, k,i}$ where $\ell_i$ does not depend on $i$ modulo $\mathcal{U}$. Write $\ell$ for the constant value of $\ell_{i}$ (on a $\mathcal{U}$-large set). By semi-simplicity, $C_{{\rm Soc}(G_i)}(G_i)=1$, so $G_i \hookrightarrow {\rm Aut}({\rm Soc}(G_i))$ and the sizes $|{\rm Soc}(G_i)|$ grow without a bound when $i$ varies. This means that there are characteristically simple groups $ M_{r,i} \leqslant {\rm Soc}(G_i)$ so that the sizes $|M_{r,i} |$ grow without a bound when $i$ varies (in particular, the sizes of the simple components $S_{r, k,i}$ of $M_{r,i}$ grow without a bound).

Let $S_{r, 1,i}\times \cdots \times S_{r, \ell,i} = M_{r,i} \unlhd  {\rm Soc}(G_i)$, and $|S_{r, k,i}| \rightarrow \infty$ as $i$ varies, for all $k$. By a standard application of the Indecomposability Theorem (Theorem~\ref{th:indec} below) for groups of finite $SU$-rank, simple normal subgroups are definable and non-abelian definably simple groups are simple. So we get that \begin{itemize}
\item $M_r \equiv \prod_{i\in I} M_{r,i}/\mathcal{U}$ is definable, and
\item $M_{r} = S_{r,1} \times \cdots \times S_{r,\ell}$, with $S_{r,k}$ all definable isomorphic simple pseudo-finite groups with $ S_{r,k} \equiv \prod_{i\in I} S_{r,k,i}/\mathcal{U}$ for each $k\in \{1,\ldots, \ell\}$.
\end{itemize} 
The claim easily follows.\end{proof}

\subsection{Indecomposability and field interpretation results}

\begin{thm}[Indecomposability Theorem, Wagner {\cite[Theorem 5.4.5]{Wagner2000}}]\label{th:indec}Let $G$ be a supersimple group of finite $SU$-rank and $\{X_i : i\in I\}$ be a (possibly infinite) collection of definable subsets of $G$. Then there exists a definable $H \leqslant G$ so that \begin{enumerate}
\item $H \leqslant \langle X_i : i \in I \rangle$ and there are $i_0,\ldots, i_n\in I$ so that $H \leqslant X^{\pm 1}_{i_0}\cdots X^{\pm 1}_{i_n}$;
\item $X_i/H$ is finite for each $i\in I$.
\end{enumerate}
Moreover, if the collection $\{X_i : i\in I\}$ is setwise invariant under some group $\Sigma$ of definable automorphisms of $G$, then $H$ can be chosen to be $\Sigma$-invariant.\end{thm}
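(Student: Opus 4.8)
The plan is to run the classical stabiliser argument behind Zilber's Indecomposability Theorem, adapted to the supersimple finite-rank setting, where the absence of Morley degree is compensated by the fact that bounded index means finite index. First I would close the family off under the group operations: let $\mathcal{F}$ be the collection of all definable sets $X_{i_0}^{\varepsilon_0}\cdots X_{i_n}^{\varepsilon_n}$, which is closed under products and inverses. Since every member of $\mathcal{F}$ has $SU$-rank at most $SU(G)<\omega$, I may choose $Y\in\mathcal{F}$ of maximal $SU$-rank $d$. Replacing $Y$ by $YY^{-1}\in\mathcal{F}$ (still of rank $d$ by maximality, since it contains a translate of $Y$), I arrange that $1\in Y=Y^{-1}$; as $1\in Y$, the set $YY\in\mathcal{F}$ contains $Y$, so $SU(YY)=d$ as well.

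The core object is the stabiliser of a generic type of $Y$. Fix a small model $M$ over which the family is defined and let $p$ be a generic type of $Y$ over $M$, so $SU(p)=d$. For independent realisations $a\ind_M b$ of $p$, additivity of $SU$-rank (valid in finite rank, see \cite{Wagner2000}) gives $SU(ab/b)=SU(a/b)=SU(a)=d$, whence $SU(ab)=d=SU(YY)$ and $ab\ind_M b$; this is precisely the condition guaranteeing that $p$ is a translate of the principal generic of its stabiliser $H:=\mathrm{Stab}(p)$. Thus $H$ is a subgroup with $SU(H)=d$ which, in finite $SU$-rank, is definable because the a priori bounded index data collapses to finite index. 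Moreover $\mathrm{Stab}(p)$ is contained in a bounded — hence, in finite rank, finite — product of difference sets $\{ba^{-1}:a,b\models p,\ a\ind_M b\}\subseteq YY^{-1}=YY\in\mathcal{F}$, so $H\leqslant X_{i_0}^{\varepsilon_0}\cdots X_{i_m}^{\varepsilon_m}$ for some finite subfamily, and in particular $H\leqslant\langle X_i:i\in I\rangle$. This establishes clause (1).

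For clause (2) I would argue directly by a rank count, which is the heart of the proof. Fix $i$; since $H$ lies in a finite product of copies of $Y$, the set $HX_i$ again belongs to $\mathcal{F}$, so $SU(HX_i)\leqslant d$. Let $x$ be a generic of $X_i$ over $M$ with $SU(x)=e$, and let $a$ be a generic of $H$ with $a\ind_M x$. Then $ax\in HX_i$ gives $SU(ax)\leqslant d$, while $SU(ax/x)=SU(a/x)=d$ forces $SU(ax)=d$ and $ax\ind_M x$. Computing $SU(x,ax)$ in two ways via additivity yields $SU(x/ax)=e$, i.e. $x\ind_M ax$. Now the right coset $Hx=H(ax)$ lies in $\mathrm{acl}^{eq}(Mx)\cap\mathrm{acl}^{eq}(M(ax))$, and since $x\ind_M ax$ this intersection equals $\mathrm{acl}^{eq}(M)$, so $Hx\in\mathrm{acl}^{eq}(M)$ takes only boundedly — hence finitely — many values. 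Thus the generic part of $X_i$ meets only finitely many cosets of $H$, and the remainder of $X_i$ has $SU$-rank $<e$, so a routine induction on $SU(X_i)$ shows that $X_i$ meets finitely many cosets of $H$ altogether, giving clause (2).

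Finally, for the $\Sigma$-invariance, the subgroup $H$ is a priori only defined over the parameters of $Y$, which need not be fixed by $\Sigma$. However, if $\{X_i\}$ is $\Sigma$-invariant then so is $\mathcal{F}$, and the $\Sigma$-translates of $H$ are all commensurable with $Y$ with uniformly bounded index, forming a uniformly commensurable family. Applying Schlichting's Theorem (Theorem~\ref{th:commensurable}) produces a definable $\Sigma$-invariant subgroup $N$ commensurable with $H$; being a finite extension of a finite intersection of $\Sigma$-translates of $H$, it still lies in a finite product of the $X_i^{\pm1}$ inside $\langle X_i\rangle$, and commensurability with $H$ preserves the property that each $X_i$ meets finitely many of its cosets. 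The main obstacle is packaged in the second paragraph: verifying that the generic type of the maximal product is a coset-generic of its stabiliser, so that $H$ is a genuine \emph{definable} subgroup of full rank $d$ sitting inside a finite product of the $X_i^{\pm1}$. Once this piece of stabiliser theory is in place, the rank computation for clause (2) and the Schlichting step are comparatively mechanical.
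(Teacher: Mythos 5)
You should first note that the paper contains no proof of this statement at all: it is imported verbatim from Wagner's book (Theorem 5.4.5 there), so the only meaningful comparison is with Wagner's proof, and your outline does follow that same standard stabiliser strategy (maximal-rank product $Y$, stabiliser of a maximal-rank type, rank bookkeeping for the finite coset covering, Schlichting for invariance). The genuine gap is exactly the step you yourself flag as ``the main obstacle'' and then dispatch with one clause: the \emph{definability} of $H=\mathrm{Stab}(p)$. In a simple theory the stabiliser is a priori only type-definable --- an intersection of relatively definable sets that individually need not be groups --- and finite $SU$-rank does not make type-definable subgroups definable: in a pseudo-finite field $F$ (supersimple of rank $1$) the intersection $\bigcap_{n}(F^{\times})^{n}$ is a type-definable subgroup of $F^{\times}$ of full rank $1$ in $F^{\times}$ which is not definable. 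So ``the a priori bounded index data collapses to finite index'' is not an argument. To close the gap one needs a real input, e.g.\ the theorem that in a supersimple theory every type-definable group is an intersection of definable groups; since $\mathrm{Stab}(p)\subseteq Y^{4}\in\mathcal{F}$, compactness then yields a definable group $H'$ with $\mathrm{Stab}(p)\leq H'\subseteq Y^{4}$ and $SU(H')=d$, and your rank computation for clause (2) runs verbatim for $H'$. (Incidentally, your genericity-plus-induction in clause (2) is unnecessary: the same computation with an \emph{arbitrary} $x\in X_i$ gives $ax\ind_M x$ and $x\ind_M ax$, so every coset $H'x$ is algebraic over $M$, and the interpretable set $X_i/H'$ is then finite by compactness, since an infinite $M$-interpretable set carries a nonalgebraic type.)

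The invariance step also has holes as written. ``The $\Sigma$-translates of $H$ are all commensurable with $Y$'' is a category error ($Y$ is a definable set, not a group), and the pairwise commensurability of the translates $H^{\sigma}$ is asserted rather than proven; it does not follow merely from all of them having rank $d$ (two rank-$1$ subgroups of a rank-$2$ group can intersect trivially). The correct argument uses maximality again: $HH^{\sigma}$ lies inside a product from the $\Sigma$-invariant family $\mathcal{F}$, so $SU(HH^{\sigma})\leq d$, while the Lascar inequalities give $SU(HH^{\sigma})\geq SU(H)+SU(H^{\sigma})-SU(H\cap H^{\sigma})$, forcing $SU(H\cap H^{\sigma})=d$ and hence finite index on both sides by the Lascar equality; uniformity of the commensurability then comes from the chain condition icc$^{0}$ for the uniformly definable family of translates, after which Schlichting applies. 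Finally, you still owe clause (1) for the Schlichting group $N$: it is a finite union of translates of a finite intersection of the $H^{\sigma}$, and a finite union of finite products of the $X_i^{\pm 1}$ is not itself such a product --- one must absorb the union into a single product, e.g.\ using $1\in QQ^{-1}$ for each nonempty product $Q$. All of this is repairable and in the spirit of the cited proof, but as submitted the two key claims (definability of the stabiliser, commensurability of its translates) are stated, not proved.
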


There are several field interpretation results in the broader context of finite-dimensional groups (see in particular \cite{Wagner2020, Elwesetal, deloro2024zilber}). The following version is slightly stronger than what we need, but worth stating as it seems that such a formulation does not appear in the literature.

We say that a subset $X$ of a group $A$ is \emph{almost $H$-invariant} for some group $H \leqslant {\rm Aut}(A)$ if there is some finite subset $H_0\subseteq H$ so that for any $h\in H$ we have $X^h\subseteq \bigcup_{h_0\in H_0}X^{h_0}$.

\begin{thm}[Field Interpretation]\label{th:field}Let $G$ be a pseudo-finite finite-dimensional group with fine and additive dimension and $A,H \leqslant G$ be infinite definable subgroups with $C_H(A)=1$, $A$ abelian and $H$ finite-by-abelian. Assume that $A$ has no proper definable almost $H$-invariant subgroup of infinite index. Then there is an interpretable pseudo-finite field $F$ so that $A\cong F^+$ and $H \hookrightarrow F^\times$.

If $H$ is assumed to be abelian then we may relax the latter assumption by removing `almost' from it.
\end{thm}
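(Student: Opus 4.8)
The plan is to run the classical Zilber-style field construction, building $F$ from the ring of additive endomorphisms of $A$ generated by the action of $H$, and to use pseudo-finiteness together with finite-dimensionality to guarantee definability and to upgrade injective maps to bijective ones. Since $C_H(A)=1$, conjugation embeds $H$ faithfully into the endomorphism ring $\mathrm{End}(A)$ of the abelian group $A$. Let $R$ be the unital subring of $\mathrm{End}(A)$ generated by $H$, equivalently the image of the integral group ring $\mathbb{Z}H$. For $r\in R$, written as a finite $\mathbb{Z}$-combination of elements of $H$, both $\ker r$ and $rA$ are definable subgroups of $A$. The first point I would check is that they are \emph{almost} $H$-invariant: because $H$ is finite-by-abelian, $H'=[H,H]$ is finite, so every conjugacy class $\{g^{h}:h\in H\}$ lies in the coset $gH'$ and is finite; hence the conjugation orbit $\{r^{h}:h\in H\}$ of a fixed $r$ is finite, and $h\cdot\ker r=\ker(r^{h})$ (resp. $h\cdot rA=r^{h}A$) takes only finitely many values. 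This is exactly where the finite-by-abelian hypothesis and the word ``almost'' in the minimality assumption are consumed; when $H$ is abelian these orbits are trivial and the subgroups are genuinely $H$-invariant, which is why ``almost'' may then be dropped.

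Next I would exploit minimality. For $0\ne r\in R$ the image $rA$ is a nonzero definable almost-$H$-invariant subgroup, so by hypothesis it is not a proper subgroup of infinite index; hence $[A:rA]<\infty$, and additivity of dimension (Fact~\ref{fact:Lascar-eq}) applied to $r\colon A\to rA$ forces $\dim\ker r=0$, so $\ker r$ is finite by fineness. Passing, if necessary, to the quotient of $A$ by a maximal finite definable almost-$H$-invariant subgroup (whose existence I would extract from the chain condition icc$^{0}$ via Schlichting's Theorem~\ref{th:commensurable}, checking that faithfulness of the $H$-action survives the quotient), I may assume every nonzero $r\in R$ is injective. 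At this point pseudo-finiteness does the essential work: writing $A\equiv\prod_{i}A_{i}/\mathcal{U}$, an injective endomorphism of the finite group $A_{i}$ is bijective, so by \L o\'s every nonzero $r\in R$ acts as an automorphism of $A$. Thus $R$ is a subring of $\mathrm{Aut}(A)$ all of whose nonzero elements are invertible.

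To interpret the field I would fix $0\ne a_{0}\in A$ and transport the ring structure of $R$ onto $A$. Minimality gives $\langle Ha_{0}\rangle=A$, and the Indecomposability Theorem~\ref{th:indec} together with the chain conditions show that $A$ is already a sum of boundedly many translates of the definable orbit $Ha_{0}$; hence the evaluation map $r\mapsto ra_{0}$, restricted to a single definable bounded piece $R_{n}\subseteq R$, is onto $A$, and---since nonzero elements of $R$ are injective---it is a definable bijection $R_{n}\leftrightarrow A$. Transporting addition (the group law on $A$) and multiplication ($a\cdot b:=r_{a}(b)$, where $r_{a}a_{0}=a$) through this bijection yields an interpretable ring structure $F$ on $A$ with $A\cong F^{+}$, in which every nonzero element is invertible. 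Because the whole construction is uniform across the finite groups $G_{i}$, $F$ is elementarily equivalent to an ultraproduct of finite division rings, which are finite fields by Wedderburn's theorem; hence $F$ is a commutative pseudo-finite field. Finally, each $h\in H$ is a nonzero element of $R=F$, so lies in $F^{\times}$, and $h\mapsto h$ is a group homomorphism $H\to F^{\times}$ (multiplication in $H$ is composition of endomorphisms, i.e.\ multiplication in $F$) which is injective by $C_{H}(A)=1$; this gives $H\hookrightarrow F^{\times}$, and in particular forces $H$ to be abelian, consistent with the hypothesis being a genuine weakening.

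The step I expect to be the main obstacle is the passage in the second and third paragraphs: converting the merely ``almost invertible'' behaviour of nonzero endomorphisms into honest automorphisms, and ensuring the resulting multiplication is genuinely \emph{definable} rather than only $\bigvee$-definable. Both are resolved by the finite-dimensional machinery---the chain condition icc$^{0}$ to produce and bound the finite invariant subgroup and to bound the number of translates, the Indecomposability Theorem for definability of $\langle Ha_{0}\rangle$, and pseudo-finiteness to upgrade injectivity to bijectivity---so the real work is the careful bookkeeping of these uniformities, so that the interpreted structures $F_{i}$ over the finite $G_{i}$ are genuinely finite fields and the faithfulness of $H$ is preserved after quotienting.
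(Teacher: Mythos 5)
Your first half (defining $R$ as the ring of endomorphisms of $A$ generated by $H$, almost $H$-invariance of $\ker r$ and $rA$ via finiteness of $H'$, injectivity of nonzero $r$ from the minimality hypothesis, and the pseudo-finiteness upgrade from injective to bijective) is exactly the paper's route. But your detour through a quotient by a maximal finite definable almost $H$-invariant subgroup is both unnecessary and a sign of a misreading: the hypothesis excludes every proper nontrivial definable almost $H$-invariant subgroup of \emph{infinite} index, and a nontrivial finite subgroup of the infinite group $A$ has infinite index, so once $\ker r$ is finite it is already trivial --- no quotient is needed. (The paper is even more direct and dimension-free: if $\ker r$ had finite index, then $\mathrm{im}(r)$ would be finite, hence trivial, hence $r=0$.) Had the quotient genuinely been needed, the worry you flag yourself would be fatal, since faithfulness $C_H(A/N)=1$ need not survive the quotient.

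The real gaps are in your second half, where you diverge from the paper. First, your appeal to the Indecomposability Theorem~\ref{th:indec} is illegitimate at the stated level of generality: that theorem is about supersimple groups of finite $SU$-rank, whereas Theorem~\ref{th:field} is asserted for the wider class of pseudo-finite finite-dimensional groups with fine and additive dimension, and the paper explicitly notes (in the discussion before Question~\ref{q:prim}) that Theorem~\ref{th:indec} does \emph{not} generalise to this context. The paper avoids this entirely by citing \cite[Proposition 3.6 or Corollary 3.10]{Wagner2020}, which works in the finite-dimensional setting and directly produces the interpretable skew field of fractions $F$ of $R$ with $A \cong F^{+}$; Wedderburn then gives commutativity, as in your final step. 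Second, even within the finite $SU$-rank case your step ``minimality gives $\langle Ha_{0}\rangle = A$'' is unjustified: the hypothesis forbids only proper invariant subgroups of \emph{infinite} index, so $\langle Ha_{0}\rangle$ (or the definable finite-index subgroup that indecomposability produces inside it) could a priori be a proper subgroup of finite index in $A$, in which case your evaluation bijection lands in a proper subgroup and $A \cong F^{+}$ does not follow without a further argument (e.g.\ showing $A$ is a one-dimensional $F$-vector space, which is essentially the content of Wagner's cited results). Finally, note that $R$ need not be commutative when $H$ is merely finite-by-abelian, so forming a ``field of fractions'' requires an Ore-type localization --- again part of what the citation to \cite{Wagner2020} is carrying, and something your sketch silently assumes.
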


\begin{proof}Set $R:={\rm End}_H(A)$ to be the ring of endomorphisms of $A$ generated by $H$. Let $ r\in R \setminus \{0\}$. Since $H$ is (finite-by-)abelian, ${\rm ker}(r)$ and ${\rm im}(r)$ are (almost) $H$-invariant definable subgroups of $A$ and hence, by our assumptions, each of them is either trivial or a finite index subgroup of $A$. Now ${\ker}(r)$ cannot be a finite index subgroup of $A$ as otherwise ${\rm im}(r)$ is a finite (almost) $H$-invariant definable subgroup of $A$, thus trivial, whence we would get $r=0$. So any $r\in R \setminus \{0\}$ is injective; being a definable injective map from the pseudo-finite group $A$ to itself, $r$ is also surjective. So $R$ acts on $A$ by automorphisms. In particular it is invertible. By \cite[Proposition 3.6 or Corollary 3.10]{Wagner2020} the field of fractions $F$ of $R$ is an interpretable skew field; by Wedderburn’s Little Theorem $F$ is an intepretable pseudo-finite field. So $A \cong F^+$ and $H \hookrightarrow F^\times$.
\end{proof}

We say that a group $G$ is \emph{almost simple} if it has a normal subgroup $S$ such that $G \leq \mathrm{Aut}(S)$. Further, say that $G$ is \emph{definably almost simple} if the simple group $S$ is a definable subgroup of $G$.

\begin{lem} \label{lem: field aut} Suppose $G$ is a pseudo-finite definably almost simple group (in a language containing $\mathcal{L}_{gp}$) and let $S$ denote the definable simple subgroup, necessarily a (twisted) Chevalley group over a pseudo-finite field $\mathbb{F}$, such that 
    $$
    S \leq G \leq \mathrm{Aut}(S).
    $$
    Then the subgroup $G \cap \mathrm{Aut}(\mathbb{F})$ consisting of field automorphisms of $S$ is a definable subgroup of $G$ and the permutation group $(G \cap \mathrm{Aut}(\mathbb{F}),\mathbb{F})$ is interpretable in $G$.
\end{lem}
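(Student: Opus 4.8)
The plan is to recover the field $\mathbb{F}$ inside $G$ and then read the field automorphisms off the conjugation action of $G$ on its simple normal subgroup. Since $S \leq G$ is definable and, by Theorem~\ref{th:wilson}, $S$ and $\mathbb{F}$ are bi-interpretable uniformly in the Lie type of $S$, I would first fix such a bi-interpretation and thereby obtain an interpretable (with parameters) copy $\mathbb{F}^{*} \cong \mathbb{F}$ of the field inside $G$, whose ring operations are $\mathcal{L}$-definable. This uniformity is what lets me treat all the Lie types at once.

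Next I would transport the action. As $S \trianglelefteq G$, conjugation gives a definable map $G \times S \to S$ under which each $g \in G$ acts as a group automorphism of $S$. Because $\mathbb{F}^{*}$ together with its addition and multiplication is defined from $S$ by $\mathcal{L}_{gp}$-formulas, every automorphism of $S$ preserves $\mathbb{F}^{*}$ setwise along with its field structure; hence conjugation induces a definable action $\beta \colon G \times \mathbb{F}^{*} \to \mathbb{F}^{*}$ of $G$ on $\mathbb{F}^{*}$ by field automorphisms, and its kernel $K = \{g \in G : \beta(g) = \mathrm{id}\}$ is a definable normal subgroup of $G$ containing $S$.

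The central step is to pin down the field automorphisms $G \cap \mathrm{Aut}(\mathbb{F})$ inside $G$ using $\beta$. Here I would appeal to the description of $\mathrm{Aut}(S)$ for simple groups of Lie type, by which each automorphism factors as an inner-diagonal automorphism, a graph automorphism, and a field automorphism, with the inner-diagonal and graph parts inducing the identity on the canonical field $\mathbb{F}^{*}$ and the field part acting as the corresponding element of $\mathrm{Aut}(\mathbb{F}^{*})$. Thus $\beta$ sees exactly the field part of each $g$, and $\beta$ restricts to an embedding of $G \cap \mathrm{Aut}(\mathbb{F})$ into $\mathrm{Aut}(\mathbb{F}^{*})$. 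To obtain definability of $G \cap \mathrm{Aut}(\mathbb{F})$ as a subgroup of $G$, I would single out the prescribed field-automorphism complement by a definable condition: fix definable standard data in $S$ (a maximal torus and root subgroups built from $\mathbb{F}^{*}$) and define $H := G \cap \mathrm{Aut}(\mathbb{F})$ as those $g \in G$ that stabilise this data positionally and act on each root subgroup through a single field automorphism of $\mathbb{F}^{*}$. Once $H$ is definable, the permutation group $(H,\mathbb{F})$ is interpretable: $H$ is a definable subgroup of $G$, $\mathbb{F} \cong \mathbb{F}^{*}$ is interpretable in $G$, and the restriction of $\beta$ to $H \times \mathbb{F}^{*}$ is a definable faithful action.

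The main obstacle I expect is exactly this isolation of the field automorphisms. Since the inner-diagonal and graph automorphisms all lie in $K$ and act trivially on $\mathbb{F}^{*}$, the action $\beta$ by itself only recovers $G/K$, whereas the lemma asks for the genuine subgroup $G \cap \mathrm{Aut}(\mathbb{F})$, and complements to $K$ are not canonical. Making the standard data uniformly definable across the Lie types, and handling the twisted types where the graph and field symmetries interact, is where I expect the real work to lie; the bounded order of the diagonal and graph contributions should, however, keep this a finite, type-by-type verification rather than a source of genuine $SU$-rank.
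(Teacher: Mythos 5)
Your endgame coincides with the paper's proof: the paper also invokes Ryten's interpretation of $\mathbb{F}$ in $S$ via a definable multiplication on a root subgroup $U_r$, and then defines the field automorphisms as those $g \in G$ that (a) preserve $U_r$ and (b) act on the coordinates of a matrix copy $S' \subseteq \mathbb{F}^{n^2}$ (coming from the adjoint module $\mathbb{L}_{\mathbb{F}}$) by applying the field automorphism induced on $U_r \cong \mathbb{F}$ coordinate-wise; like you, it defers to Ryten for the extra care needed in the twisted types. However, the middle of your argument contains a step that is simply false and should be excised. The interpreted copy $\mathbb{F}^*$ is built from a root subgroup chosen with parameters, so it is \emph{not} preserved by arbitrary automorphisms of $S$: an automorphism $\sigma$ carries the set defined by $\phi(x,a)$ to the one defined by $\phi(x,\sigma(a))$, and indeed inner automorphisms of $S$ move $U_r$ to other conjugates. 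Hence there is no well-defined action $\beta : G \times \mathbb{F}^* \to \mathbb{F}^*$ of all of $G$, and the claim that the inner-diagonal and graph parts ``induce the identity on the canonical field'' fails on two counts: generic inner automorphisms do not stabilise $U_r$ at all, while the torus-inner and diagonal automorphisms that do stabilise it act by nontrivial scalings $x_\alpha(t) \mapsto x_\alpha(c_\alpha t)$, not trivially. So your $K$ is not a definable normal subgroup containing $S$, and $\beta$ does not ``see exactly the field part'' of an element of $G$.

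Fortunately, your final definable condition does not need $\beta$ on all of $G$. Elements of your $H$ stabilise the standard data by definition, so conjugation restricted to $H \times \mathbb{F}^*$ is a definable faithful action, which gives interpretability of $(H,\mathbb{F})$; and the identification $H = G \cap \mathrm{Aut}(\mathbb{F})$ follows from Steinberg's factorisation together with two observations you should make explicit: a map $t \mapsto c\,\theta(t)$ with $\theta$ a field automorphism is multiplicative only when $c = 1$ (this is what actually rules out diagonal contamination --- not triviality of the diagonal action, which is false), and an automorphism acting as $x_\alpha(t) \mapsto x_\alpha(\theta(t))$ on every root subgroup is determined on generators of $S$, hence equals the standard field automorphism attached to $\theta$. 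With the faulty paragraph replaced by this, your proof is essentially the paper's, differing only in that you verify the field action root-subgroup-by-root-subgroup where the paper checks it coordinate-wise on the matrix form $S'$.
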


\begin{proof}As $S$ is a Chevalley group there is a Lie algebra $\mathbb{L}$ over $\mathbb{Z}$ and $S$ is isomorphic to a subgroup of $\mathrm{GL}(\mathbb{L}_{\mathbb{F}})$, where $\mathbb{L}_{\mathbb{F}} = \mathbb{F} \otimes_{\mathbb{Z}} \mathbb{L}$. By Ryten \cite[Chapter 5]{Ryten2007}, the field $\mathbb{F}$ is interpretable in $S$, hence in $G$. Also, the group $\mathrm{GL}(\mathbb{L}_{\mathbb{F}}) \subseteq \mathbb{F}^{n^{2}}$ where $n$ is the dimension of $\mathbb{L}_{\mathbb{F}}$, is interpretable in $\mathbb{F}$.  Moreover, there is an $\mathbb{F}$-definable subgroup $S'$ of $\mathrm{GL}(\mathbb{L}_{\mathbb{F}})$ and an $S$-definable isomorphism $\iota : S \to S'$. 

Ryten shows how to define a multiplication on a root subgroup $U_{r}(\mathbb{F}) \subseteq S$ such that it is isomorphic to $\mathbb{F}$.  So the field automorphisms are defined by the elements of $G$ that (a) preserve the (definable) root subgroup $U_{r}$ and (b) the associated automorphism on $S'$ is obtained by applying the induced automorphism on $\mathbb{F}$ coming from the action on $U_{r}$ coordinate-wise to the $n^{2}$ coordinates of the elements of $S'$. 

The argument for the case when $S$ is a twisted Chevalley group is the same, with the only difference being that Ryten shows that, in this case, there is some care in selecting the root subgroup. 
\end{proof}

\begin{fact}[{\cite[Corollary 5.5]{Zou2020}}]\label{fact:field-autom}Suppose $(F,B)= \prod_{i\in I} (\mathbb{F}_{p_i^{n_i}}, B_i)/\mathcal{U}$ is a pseudo-finite structure with $F$ a field and $B$ an infinite set of automorphisms of $F$. Then the theory of $(F,B)$ is not supersimple.\end{fact}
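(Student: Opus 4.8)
The plan is to argue by contradiction: I assume $T = \mathrm{Th}(F,B)$ is supersimple and derive a violation of the chain condition that supersimple groups must satisfy. The general fact I would use is that the additive group $(F,+)$, being interpretable, is then a supersimple group, and hence admits no infinite descending chain $(F,+) = H_0 \supsetneq H_1 \supsetneq \cdots$ of definable subgroups in which every index $[H_j : H_{j+1}]$ is infinite. This is immediate from the standard Lascar inequalities for $SU$-rank (cf.\ the reference to Wagner's book): if $[H_j : H_{j+1}]$ is infinite then the coset space $H_j/H_{j+1}$ is an infinite definable set, so $SU(H_j/H_{j+1}) \geq 1$, whence $SU(H_{j+1}) + 1 \leq SU(H_j)$ and $SU(H_{j+1}) < SU(H_j)$; an infinite strictly decreasing sequence of ordinals is impossible. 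So it suffices to manufacture, inside $(F,+)$, such an $\omega$-chain of definable subgroups of infinite index.

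The raw material is definable subfields coming from the action. For $b, b' \in B$ the set $E(b,b') = \{x \in F : \mathrm{app}(b,x) = \mathrm{app}(b',x)\}$ is a definable subfield of $F$ (it is the fixed field of $b^{-1}b'$, and one checks directly that it is closed under the field operations), and over the finite approximations $\mathbb{F}_{p_i^{n_i}}$ with $b = \sigma_i^{k}$ and $b' = \sigma_i^{k'}$ it equals $\mathbb{F}_{p_i^{\gcd(k-k',n_i)}}$. Since $B$ is infinite, the exponent sets $S_i = \{k : \sigma_i^k \in B_i\} \subseteq \mathbb{Z}/n_i$ satisfy $|S_i| \to \infty$ along $\mathcal{U}$, and in particular $n_i \to \infty$. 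I would first treat the \emph{generic} case, in which the difference-$\gcd$ values $\{\gcd(k-k',n_i) : k,k' \in S_i\}$ contain a divisor chain $d_{0,i} \mid d_{1,i} \mid \cdots$ whose length tends to infinity and whose additive gaps $d_{j+1,i} - d_{j,i}$ tend to infinity (this happens, for instance, whenever the $n_i$ are highly composite and $B_i$ is the full Galois group, so that $S_i - S_i$ realizes every divisor of $n_i$). The corresponding fixed fields $\mathbb{F}_{p_i^{d_{j,i}}}$ are nested with additive index $p_i^{\,d_{j+1,i}-d_{j,i}} \to \infty$, and passing to the ultraproduct yields exactly the forbidden infinite descending chain of infinite-index definable subgroups of $(F,+)$, contradicting supersimplicity.

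The main obstacle is the degenerate case in which this construction is unavailable, most sharply when the $n_i$ are prime: then every nontrivial fixed field is the prime field, the subfield lattice of each $\mathbb{F}_{p_i^{n_i}}$ has length two, and yet $|B_i| \to \infty$ is still possible, so the subfields produce no long chain and a genuinely different mechanism is needed. The approach I would take is to exploit the interpretable group structure on $B$ itself, since composition is definable from the action via $R(a,b,c) \iff \forall x\, \mathrm{app}(a,\mathrm{app}(b,x)) = \mathrm{app}(c,x)$; thus $(F,B)$ interprets the pseudo-finite field $F$, its definable prime subfield $\bigcap_{b\in B} \mathrm{Fix}(b)$, and an infinite cyclic group acting faithfully and $\mathbb{F}_p$-linearly on $F$ essentially as a regular representation. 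One then seeks the contradiction either through the two entangled multiplicative structures on $F$ (the field multiplication and the convolution coming from the $B$-action) or, following coarse pseudo-finite dimension methods, by showing that the normalized logarithmic sizes of the definable $B$-orbits and their stabilizers cannot be reconciled with the additive, well-founded rank theory forced by supersimplicity. Making this degenerate case work uniformly over all admissible families $(B_i)$ is precisely the crux, and is where the substantive content of Zou's theorem lies.
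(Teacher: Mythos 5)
The well-foundedness argument in your first paragraph is correct: in a supersimple theory the Lascar inequality gives $SU(H_{j+1})+SU(H_j/H_{j+1})\leq SU(H_j)$, so a strictly descending chain of definable subgroups of $(F,+)$ with infinite indices would yield an infinite descending sequence of ordinals; and the sets $E(b,b')$, equal to $\mathbb{F}_{p_i^{\gcd(k-k',n_i)}}$ on the approximations, are indeed uniformly definable subfields. But the chain you build exists only under the strong arithmetic hypothesis you impose on the exponent sets $S_i$ — that the values $\gcd(k-k',n_i)$ realize divisor chains of unbounded length with unbounded gaps — and this fails for far more families than the $n_i$-prime case you flag: whenever the $n_i$ have a bounded number of prime factors (e.g.\ $n_i = 2q_i$ with $q_i$ prime), the subfield lattice of $\mathbb{F}_{p_i^{n_i}}$ has bounded length, so no infinite chain of fixed fields can exist no matter how large $B_i$ is, and yet $|B_i|\to\infty$ is perfectly possible. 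So what you call the degenerate case is in fact the main case, and your "generic" case covers only a thin family of admissible $(B_i)$.

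For that main case you offer a programme rather than a proof: no contradiction with supersimplicity is actually derived from the convolution structure on $B$ or from coarse pseudo-finite dimension, and you concede as much when you write that making it work is "precisely the crux" of Zou's theorem. That concession means the statement is not proved. (For calibration: the paper does not prove this Fact either — it imports it verbatim as \cite[Corollary 5.5]{Zou2020} — so there is no internal proof to compare against; but as a standalone argument yours establishes only the special case.) Two further points in the fallback would need repair even as a plan: $B$ need not be closed under composition, and the subgroup it generates, while a subgroup of a "cyclic-like" pseudo-finite abelian group, is not obviously definable — the definable sets are the finite products $B^{\pm 1}\cdots B^{\pm 1}$, and without supersimplicity you have no indecomposability theorem to make them stabilize; and $\bigcap_{b\in B}\mathrm{Fix}(b)$ is only type-definable, equal on approximations to $\mathbb{F}_{p_i^{g_i}}$ with $g_i=\gcd(S_i\cup\{n_i\})$, which need not be the prime field.
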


\subsection{Primitive permutation groups of bounded orbital diameter} \label{subsection: primfacts}

Let $(G,X)$ be a transitive permutation group. Then an \emph{orbital graph} for $(G,X)$ is a graph with vertex set $X$ whose edge set is an orbit of $G$ on $X^{\{2\}}$, the collection of unordered $2$-element subsets of $X$. Given $d\in \mathbb{N}$, denote by $\mathcal{F}_d$ the class of finite primitive permutation groups whose orbital graphs are of diameter at most $d$. A class $\mathcal{C}$ of finite primitive permutation groups is called \emph{bounded} if $\mathcal{C}  \subset \mathcal{F}_d$ for some $d$, that is, all the orbital graphs of members of $\mathcal{C}$ are of diameter at most $d$.


Note that if $(G,X)$ is a primitive (rather than merely definably primitive) permutation group equal to a nonprincipal ultraproduct $\prod (G_{i},X_{i})/\mathcal{U}$, where each $(G_{i},X_{i})$ is a finite primitive permutation group, then, modulo $\mathcal{U}$, the permutation groups $(G_{i},X_{i})$ belong to a bounded class, as explained in \cite{LMT}. 

We extract below a sufficient version (for our purposes) of the main theorem in \cite{LMT}.

\begin{thm}[Liebeck, Macpherson and Tent {\cite[Theorem 1.1 and discussion in Section 7]{LMT}}]\label{thm:LMT} Let $(G,X)$ be a supersimple primitive pseudo-finite permutation group of finite $SU$-rank, which is an ultraproduct of finite primitive permutation groups $(G_i, X_i)$. Then the following holds. 
\begin{enumerate}[(i)]

\item (Affine case) If ${\rm Rad}(G) \neq 1$, then $(G,X)$ is of the form $(VH, V)$, where $V=V_d(K)$ is a $d$-dimensional vector space over a pseudo-finite field $K$ and $H$ is an irreducible subgroup of ${\rm GL}_d(K)$.
\item If ${\rm Rad}(G) = 1$, then $(G,X)$ takes one of the following forms:
\begin{enumerate}
    \item (Almost simple case) $G$ is an ultraproduct of finite almost simple groups and $T = \mathrm{Soc}_{d}(G)$ is a non-abelian simple pseudo-finite group. 
    \item (Simple diagonal action case) $(G,X)$ is an ultraproduct of finite groups of simple diagonal action type and $\mathrm{Soc}_{d}(G) = T^{k}$ for some non-abelian simple pseudo-finite group $T$ and $k > 1$. 
    \item (Product action case) There is an $\ell$ and a primitive permutation group $(H,Y)$ definable in $(G,X)$ (and thus also pseudo-finite and of finite $SU$-rank) such that $G$ may be definably identified with a subgroup of $H \mathrm{wr} \mathrm{Sym}_{\ell}$ and $X$ may be identified with $Y^{\ell}$ with the action being the product action. Moreover, the permutation group $(H,Y)$ will fall into either the almost simple case or the simple diagonal action case (that is, is equal to an ultraproduct which is of almost simple or simple diagonal action type). 
\end{enumerate}
\end{enumerate}
Moreover, any $\omega$-saturated primitive pseudo-finite permutation group is elementarily equivalent to an ultraproduct of elements of a bounded class.
\end{thm}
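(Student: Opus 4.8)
The plan is to derive the classification by combining the Liebeck--Macpherson--Tent theorem \cite{LMT} with the structure theory for pseudo-finite groups of finite $SU$-rank developed above, rather than reproving the underlying O'Nan--Scott analysis. By hypothesis $(G,X) \equiv \prod_{i\in I}(G_i,X_i)/\mathcal{U}$ is an ultraproduct of finite primitive permutation groups. The first dichotomy is governed by ${\rm Rad}(G)$, which exists and is definable by Theorem~\ref{thm:K-W}, and which by uniform definability of the soluble radical satisfies ${\rm Rad}(G) \equiv \prod_i {\rm Rad}(G_i)/\mathcal{U}$; hence ${\rm Rad}(G)\neq 1$ if and only if ${\rm Rad}(G_i)\neq 1$ for $\mathcal{U}$-almost all $i$. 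The whole argument is then a case split on this, with supersimplicity and finiteness of $SU(X)$ used to bound parameters and discard O'Nan--Scott types.

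First I would treat the affine case. If ${\rm Rad}(G)\neq 1$ then, mod $\mathcal{U}$, each finite primitive $(G_i,X_i)$ has a nontrivial soluble normal subgroup, hence a nontrivial abelian normal subgroup; in a finite primitive group this forces HA (affine) type, so $X_i=V_{d_i}(\mathbb{F}_{q_i})$ is an elementary abelian regular socle and $G_i = V_{d_i}(\mathbb{F}_{q_i})\rtimes H_i$ with $H_i\leq \mathrm{GL}_{d_i}(\mathbb{F}_{q_i})$ irreducible. Passing to the ultraproduct, $V:=\prod_i V_{d_i}(\mathbb{F}_{q_i})/\mathcal{U}$ is a vector space over the pseudo-finite field $K=\prod_i \mathbb{F}_{q_i}/\mathcal{U}$ and $H=\prod_i H_i/\mathcal{U}$ acts irreducibly. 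The key point is that finiteness of $SU(X)=SU(V)$ bounds the dimension: since $SU(K)=1$ we have $SU(V_d(K))=d$, so an unbounded sequence $d_i$ would force $SU(X)$ infinite; thus $d_i$ is constant $=d\leq SU(X)$ mod $\mathcal{U}$ and $V=V_d(K)$, giving part (i).

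Next I would treat the semi-simple case ${\rm Rad}(G)=1$, where the engine is the socle analysis of Lemma~\ref{lem:socle}: ${\rm Soc}_d(G)=M_1\times\cdots\times M_j$ with each $M_r=S_{r,1}\times\cdots\times S_{r,\ell_r}$ a product of isomorphic non-abelian simple pseudo-finite groups, at least one $M_r$ infinite. The O'Nan--Scott type is read off this socle together with the shape of the action, and supersimplicity prunes the list: by Remark~\ref{rem:alternating} an alternating socle cannot occur in finite $SU$-rank, and by Theorem~\ref{th:wilson} together with the impossibility of interpreting an ultraproduct of classical groups of unbounded Lie rank in a supersimple structure, each $S_{r,k}$ is a genuine simple pseudo-finite group of bounded Lie type (so of finite $SU$-rank). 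This leaves exactly three configurations: $j=\ell=1$, which is almost simple with $T=\mathrm{Soc}_d(G)$ simple (case (a)); $j=1$, $\ell>1$, which is simple diagonal with $\mathrm{Soc}_d(G)=T^{\ell}$ (case (b)); and the product action case (c), where LMT furnish $\ell$ and a definable primitive $(H,Y)$ with $G\hookrightarrow H\,\mathrm{wr}\,\mathrm{Sym}_\ell$ and $X=Y^\ell$, with $(H,Y)$ itself of type (a) or (b). Here I would check that this identification is definable in $(G,X)$, so that $(H,Y)$ inherits pseudo-finiteness and finite $SU$-rank, and that the holomorphic and twisted-wreath types are excluded because their regular normal subgroups are incompatible with the socle shape forced by Lemma~\ref{lem:socle}.

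Finally, for the moreover clause I would argue directly by $\omega$-saturation. Write the $\omega$-saturated primitive $(G,X)$ as $\prod_i(G_i,X_i)/\mathcal{U}$ with each $(G_i,X_i)$ transitive, and for fixed $n$ let $\psi_n(x,y)$ be the first-order formula asserting that $x\neq y$ are at distance exceeding $n$ in the orbital graph containing $\{x,y\}$. If the orbital diameters were unbounded along $\mathcal{U}$, then $(G,X)\models \exists x\,y\,\psi_n(x,y)$ for every $n$ by \L o\'s; since the $\psi_n$ form a decreasing chain, the type $\{\psi_n(x,y):n<\omega\}$ is finitely satisfiable and $\omega$-saturation realises a pair lying in a common orbital graph at infinite distance, disconnecting that orbital graph and contradicting primitivity of $(G,X)$. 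Hence the orbital diameters are bounded by some $d$ mod $\mathcal{U}$; as diameter $\leq d$ forces connectedness of every orbital graph, the $(G_i,X_i)$ are themselves primitive and lie in $\mathcal{F}_d$. The main obstacle I expect is not any single step but the bookkeeping that matches the abstract O'Nan--Scott/LMT trichotomy to the model-theoretic hypotheses: one must confirm that finiteness of $SU(X)$ simultaneously bounds the affine dimension, forces the simple socle factors to be of bounded Lie rank (so honestly simple pseudo-finite of finite $SU$-rank rather than limits of unbounded-rank groups), and eliminates every O'Nan--Scott type outside the four listed.
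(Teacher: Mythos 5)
Your overall architecture matches the paper's: quote the LMT classification of bounded families, split on ${\rm Rad}(G)$ (definable by Theorem~\ref{thm:K-W}), and use finite $SU$-rank via Lemma~\ref{lem:socle} and Remark~\ref{rem:alternating} to discard unbounded Lie rank and alternating socles, leaving the affine, almost simple, simple diagonal, and product action types. But the one step you explicitly defer --- ``I would check that this identification is definable in $(G,X)$'' --- is precisely where the paper's proof does its only real work, and it cannot be waved through: LMT produce $(H,Y)$ at the level of the finite groups, and to obtain a permutation group \emph{definable in} $(G,X)$ (hence pseudo-finite of finite $SU$-rank, which is what Lemma~\ref{lem:productaction} later consumes) one has to enter the proof of the O'Nan--Scott theorem. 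Concretely, writing $M = \mathrm{Soc}_d(G) = T_1 \times \cdots \times T_k$ with definable factors (Lemma~\ref{lem:socle}) and fixing $x \in X$, one splits on whether $\pi_i(M_x) = T_i$ or $\pi_i(M_x) \lneq T_i$. In the first case $M_x$ is a product of full diagonal subgroups over a partition of the factors; one sets $K = T_1 \times T_m$, $N = N_G(K)$, and $H = N^{*} = K^{*}N^{*}_x$ with $Y$ the coset space of $N^{*}_x$, where $L^{*} = LC_G(K)/C_G(K)$; in the second case one takes $N = N_G(T_1)$ and $L^{*} = LC_G(T_1)/C_G(T_1)$. None of this appears in your proposal, so the product action clause --- the part of the theorem the paper actually proves rather than cites --- is left unproven.

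Three smaller problems. First, in the affine case you assert $SU(K) = 1$; the $SU$-rank of the interpreted field in the ambient structure need not be $1$ (the paper is careful about this throughout, e.g.\ Lemma~\ref{lem: dimension inequality} keeps $SU(F)$ as a parameter). Your conclusion survives because $SU(K) \geq 1$ already yields $d \leq SU(X)$, but the stated equality is false in general. Second, excluding the holomorph and twisted wreath types ``because their regular normal subgroups are incompatible with the socle shape forced by Lemma~\ref{lem:socle}'' does not work: those types have non-abelian socle $T^k$, exactly the shape Lemma~\ref{lem:socle} produces; their absence comes from the LMT classification of bounded classes itself, which the paper simply inherits. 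Third, your formula $\psi_n(x,y)$ in the moreover clause is inconsistent as written --- $x$ and $y$ are adjacent, hence at distance $1$, in the orbital graph containing $\{x,y\}$ --- so the type you propose to realise is empty; the saturation argument needs four variables (a parameter pair generating the edge orbit, plus a pair of vertices far apart in that graph), after which the standard connectedness-characterisation of primitivity gives the contradiction you intend.
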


\begin{proof}As is explained in \cite[Section 7]{LMT}, we have that $(G,X) = \prod_{i\in I}(G_i, X_i)/\mathcal{U}$, where $\mathcal{U}$-many of the finite groups $(G_i, X_i)$ belong to the same class (1)-(6) from \cite[Theorem 1.1]{LMT}. 

If ${\rm Rad}(G)\neq 1$ then $\mathcal{U}$-many groups $(G_i, X_i)$ are not semi-simple and thus they are of type (1); so one of the cases (a)-(c) in \cite[Section 7]{LMT} happens. Since the case (a) is precisely our statement (i), it is enough to note that the cases (b) and (c) cannot happen as infinite dimensional vector spaces over a pseudo-finite field are not of finite $SU$-rank.

Because we are assuming that $(G,X)$ is of finite $SU$-rank, our hypothesis excludes any ultraproducts of groups of unbounded Lie rank or of alternating groups of unbounded size in $\mathrm{Soc}_{d}(G)$ (Lemma~\ref{lem:socle}). Thus if $\mathrm{Rad}(G) = 1$, this leaves groups of types (ii)(a), (b), and (c).  

Now we turn to the product action case in some more detail.  To get definability of the permutation group $(H,Y)$ in $(G,X)$, we have to go into the proof of the O'Nan-Scott Theorem. We know (Lemma~\ref{lem:socle}) that, by finite $SU$-rank, $\mathrm{Soc}_{d}(G)$ is a definable subgroup of $G$ which is equal to $T_{1} \times \cdots \times T_{k}$ where the $T_{i}$ are definable isomorphic non-abelian simple groups.  Since we are interested in the product action case, we assume $k > 1$. Let $M = \mathrm{Soc}_{d}(G)$ and pick $x \in X$.  Let $\pi_{i} : M \to T_{i}$ denote the projection to the $i$th factor. In the proof of the O'Nan-Scott theorem, as given in \cite{liebeck1988nan}, product actions arise in two cases. 

The first case, $\pi_{i}(M_{x}) = T_{i}$ for some/all $i = 1, \ldots, k$. In this case, $M_{x}$ is a direct product $D_{1} \times \cdots \times D_{s}$ of full diagonal subgroups $D_{i}$ of subproducts $\prod_{j \in I_{i}} T_{j}$, where the $I_{i}$ partition $\{1, \ldots, k\}$. Assuming that the factors have been enumerated so that $I_{1} = \{1, \ldots, m\}$ (so $m \geq 2$), we set $K = T_{1} \times T_{m}$ and $N = N_{G}(K)$.  For a subgroup $L \leq N_{G}(K)$, $L^{*} = LC_{G}(K)/C_{G}(K)$ denotes the automorphisms of $K$ induced by $L$ via the conjugation action. Then the permutation group $(H,Y)$ is defined by setting $H = N^{*} = K^{*}N^{*}_{x}$ and $Y$ is the set of cosets of $N^{*}_{x}$ in $H$.  This gives a definition of the permutation group $(H,Y)$ in $(G,X)$ (using that the factors of the socle $T_{i}$ are definable).  

The second case is when $\pi_{i}(M_{x}) \lneq T_{i}$ for some/all $i$.  In this case, we set $N = N_{G}(T_{1})$ and, for a subgroup $L \leq N$, we define $L^{*} = LC_{G}(T_{1})/C_{G}(T_{1})$ to be the automorphisms of $T_{1}$ induced by $L$ via conjugation. Then, as in the first case, we define $H = N^{*}$ and $Y$ the set of cosets of $N^{*}_{x}$ in $H$.  This is also clearly definable. 
\end{proof}

\section{Primitivity and definable primitivity}\label{sec:prim}

\subsection{Structure of $(G,X)$} Using standard methods, we first describe the general structure of the definably primitive permutation groups that we study:

\begin{prop}\label{propo:socle}Suppose $(G,X)$ is a finite-dimensional definably primitive permutation group with fine and additive dimension. Then one of the following holds.
    \begin{enumerate}[(1)]
     \item ${\rm Rad}(G)\neq 1$. Then $G = A \rtimes G_{x}$, with ${\rm dim}(A)={\rm dim}(X)$ and the subgroup $A \unlhd G$ is either an elementary abelian $p$-group for some prime $p$ or a torsion free divisible abelian group. Further, $A = \mathrm{Soc}_{d}(G)=C_G(A)$ is the unique proper definable normal abelian subgroup of $G$.  
    
    \item ${\rm Rad}(G)= 1$. In this case, assume further that $(G,X)$ is pseudo-finite and of finite $SU$-rank. Then, either there is a unique minimal definable normal subgroup $M_1$ of $G$, or there are exactly $2$ minimal definable normal subgroups $M_1$ and $M_2=C_G(M_1)$ of $G$. In either of the two cases, for $r\in \{1,2\}$,
        $$
        M_{r} = S_{r,1} \times \cdots  \times S_{r,\ell_{r}}
        $$
        where $S_{r,k}$ are all isomorphic non-abelian simple pseudo-finite groups as $r$ varies and $k\in\{1, \ldots , \ell_r\}$. Also, $G$ acts transitively on $\{S_{r,1}, \ldots, S_{r,\ell_{r}}\}$. 
         \end{enumerate}
\end{prop}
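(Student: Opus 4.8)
The plan is to extract everything from one consequence of definable primitivity: for any nontrivial definable normal subgroup $N \unlhd G$, the partition of $X$ into $N$-orbits is a definable $G$-congruence (since $N$ is normal, $g\cdot(N\cdot y)=N\cdot(g\cdot y)$), so by definable primitivity it is either equality or everything; as the action is faithful and $N\neq 1$, this forces $N$ to be transitive on $X$. I would first record that whenever $G$ is infinite the set $X$ is infinite (a faithful action on a finite set embeds $G$ into a finite symmetric group), so no finite normal subgroup can be transitive, and hence \emph{every} nontrivial definable normal subgroup is infinite.

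For case (1), $\mathrm{Rad}(G)$ is definable and nontrivial (Theorem~\ref{thm:K-W}), hence infinite by the remark above. Using the chain conditions of finite-dimensional groups I would pick a $G$-minimal definable normal subgroup $A\unlhd G$ with $A\leqslant\mathrm{Rad}(G)$. Then $A$ is soluble, so $A'$ is a proper definable characteristic subgroup; being normal in $G$ it is either all of $A$ (impossible, as $A$ is soluble nontrivial) or, by $G$-minimality, finite and therefore trivial by the transitivity observation. Thus $A$ is abelian, and being abelian and transitive it is regular (its stabiliser $A_x$ is normalised by the transitive group $A$ and fixes every point, so $A_x=1$); this gives the definable bijection $a\mapsto a\cdot x$ with $\dim(A)=\dim(X)$ and $G=A\rtimes G_x$. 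Now any nontrivial definable normal $N\leqslant A$ is transitive, hence equal to $A$ since $A$ is regular, so $A$ is in fact \emph{genuinely} minimal as a definable normal subgroup of $G$. Consequently the definable characteristic subgroups $A[p]=\{a:a^p=1\}$ and $A^p=\{a^p:a\in A\}$ are each $1$ or $A$, which immediately splits $A$ into the elementary abelian $p$-group case and the torsion-free divisible case. Finally I would show $C_G(A)=A$ (an element fixing $x$ and centralising the transitive $A$ fixes all of $X$, so $C_G(A)_x=1$ and $C_G(A)=A\,C_G(A)_x=A$), and deduce uniqueness formally: for any minimal definable normal $N$ or nontrivial proper definable normal abelian $B$ one has $[A,N]\leqslant A\cap N\in\{1,A\}$, and the case $A\cap N=1$ gives $N\leqslant C_G(A)=A$, a contradiction, so $N=A$. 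This yields $A=\mathrm{Soc}_d(G)=C_G(A)$ as the unique such subgroup.

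For case (2), I would invoke Lemma~\ref{lem:socle} to write $\mathrm{Soc}_d(G)=M_1\times\cdots\times M_j$ with the $M_r$ the minimal definable normal subgroups and each $M_r=S_{r,1}\times\cdots\times S_{r,\ell_r}$ a product of isomorphic non-abelian simple pseudo-finite groups. Transitivity of $G$ on $\{S_{r,1},\dots,S_{r,\ell_r}\}$ comes from minimality: $G$ permutes these components by conjugation, and the product over any $G$-orbit is an infinite definable normal subgroup inside $M_r$, hence all of $M_r$. To bound $j\leqslant 2$ I would use that each $M_r$ is transitive (the opening observation) while $C_G(M_1)$ is semiregular (any point-stabiliser element of $C_G(M_1)$ commutes with the transitive $M_1$ and so fixes all of $X$), giving $\dim C_G(M_1)\leqslant\dim X$ via Fact~\ref{fact:Lascar-eq}. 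Distinct minimal normals commute, so a second minimal normal $M_2\neq M_1$ lies in $C_G(M_1)$; being transitive and semiregular it is regular, and then $C_G(M_1)=M_2\,C_G(M_1)_x=M_2$. A third minimal normal $M_3$ would lie in $C_G(M_1)=M_2$ yet meet it trivially, forcing $M_3=1$; hence $j\leqslant 2$, and when $j=2$ we have $M_2=C_G(M_1)$.

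The group-theoretic skeleton here is just O'Nan--Scott bookkeeping, so the genuine obstacle is model-theoretic: at each step one must guarantee that the subgroups invoked --- the derived subgroup $A'$, the characteristic subgroups $A[p]$ and $A^p$, the centralisers $C_G(A)$ and $C_G(M_1)$, and the orbit-products of simple components --- are actually definable, so that the congruence argument and the dimension count apply. This is exactly what the finite-dimensional chain conditions, additivity and fineness of dimension (Fact~\ref{fact:Lascar-eq}), Theorem~\ref{thm:K-W}, and the structural Lemma~\ref{lem:socle} are there to supply; the only non-formal inequality is $\dim C_G(M_1)\leqslant\dim X$, which is what ultimately caps the number of minimal normal subgroups at two.
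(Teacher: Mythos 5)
Your case (2) is essentially the paper's own argument (Lemma~\ref{lem:socle} for the structure of $\mathrm{Soc}_d(G)$, then semiregularity of $C_G(M_1)$ plus transitivity of every nontrivial definable normal subgroup to force $M_2 = C_G(M_1)$ and $j \leq 2$), and your derivation of regularity, $C_G(A)=A$, $G = A \rtimes G_x$, and the $p$-elementary/torsion-free-divisible dichotomy in case (1) also tracks the paper. But your route \emph{into} case (1) has a genuine gap, and it sits exactly at the stated level of generality: the proposition's case (1) assumes only that $G$ is finite-dimensional with fine and additive dimension, not supersimple of finite $SU$-rank. First, the existence of a ``$G$-minimal definable normal subgroup $A \leq \mathrm{Rad}(G)$'' is not guaranteed here: the icc$^0$ chain condition constrains \emph{uniformly} definable families only, and fineness/additivity merely say that infinite-index steps drop dimension, so one can have an infinite strictly descending chain of infinite definable normal subgroups of constant dimension with finite index at each step (the pattern of $\mathbb{Z} > 2\mathbb{Z} > 4\mathbb{Z} > \cdots$), with no minimal member. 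Second, even granting such an $A$, your claim that $A'$ is a \emph{definable} characteristic subgroup is unjustified: definability of derived subgroups is an indecomposability phenomenon, and the paper explicitly notes (just before Question~\ref{q:prim}) that Theorem~\ref{th:indec} is unavailable in the finite-dimensional context; even at finite $SU$-rank, indecomposability only yields a definable subgroup commensurable with pieces of $A'$, not $A'$ itself.

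The paper avoids both problems by never passing through a minimal subgroup or a commutator subgroup to get abelianity. Since $\mathrm{Rad}(G) \neq 1$, the last nontrivial term of its derived series is an (abstract, possibly undefinable) nontrivial abelian normal subgroup $H$, and then, as recorded in Section~\ref{sec:semi-simple}, for $1 \neq a \in H$ the subgroup $Z(C_G(a^G))$ is a \emph{definable} nontrivial normal abelian subgroup $A$ of $G$. Definable primitivity then makes $A$ transitive, commutativity makes it regular, and regularity retroactively yields minimality and uniqueness (any nontrivial definable normal subgroup meets $A$ in a transitive subgroup of the regular group $A$, hence contains $A$), so no chain condition is ever invoked. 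If you replace your opening move by this centraliser trick, the rest of your case (1) goes through, and your case (2) needs no change.
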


\begin{proof} Assume first that ${\rm Rad}(G) \neq 1$. Then $G$ is not semi-simple and has a nontrivial abelian normal subgroup $H$. As is explained in Section~\ref{sec:semi-simple}, this implies that $G$ has a definable abelian normal subgroup $A$. Then, as $G$ acts definably primitively on $X$, $A$ acts transitively on $X$. Therefore, if $x^{a_1} =x
^{a_2}$ for some $x\in X$ and $a_1, a_2\in A$, then for any $y\in X$, $y=x^{a'}$ for some $a'\in A$. By commutativity of $A$, we get $$y^{a_1}=x^{a_1a'}=x^{a'a_1}=x^{a'a_2}=x^{a_2a'}=y^{a_2}.$$ Hence $a_1=a_2$ and $A$ acts regularly on $X$. So, given $x\in X$, the map sending $a\in A$ to $x^a$ is a definable bijection from $A$ to $X$. We have ${\rm dim}(A)={\rm dim}(X)$. So $A$ is the unique proper definable normal abelian subgroup of $G$. Therefore, for any $n\in \mathbb{N}$, either $nA=1$ or $nA=A$. If $nA=1$ for some $n$ then, by the minimality, $A$ is an elementary abelian $p$-group and if $nA=A$ for each $n$ then $A$ is divisible. Since, for any $n$, the set of all elements of order at most $n$ forms a characteristic and definable subgroup of $A$, in the latter case, $A$ is torsion-free. 

Now, note that $A \leqslant C_G(A)$ are definable normal subgroups of $G$. By repeating the argument above, both $A$ and $C_G(A)$ act regularly on $X$ so if we take $x \in X$ and $a' \in C_G(A)$, then there is some $a \in A$ such that $x^{a} = x^{a'}$, hence $a = a'$ so $a' \in A$ and $A=C_G(A)$. 

Now, let $x\in X$. As $A$ acts regularly on $X$, we have $A\cap G_x=1$. For any $g\in G$ there is a unique element $a\in A$ such that $x^a=x^g$. Hence, $x=x^{a^{-1}g}$ and $a^{-1}g \in G_x$. Thus $g\in A G_x$ and we get $G=A \rtimes G_x$. Let then $N$ be a proper definable normal subgroup of $G$. Then $N\cap A \neq 1$ so, by the above, $N\cap A=A$. So ${\rm Soc}_d(G)=A$.

We move on to observe (2). Assume then that $G$ is semi-simple, pseudo-finite and supersimple of finite $SU$-rank. 

By Lemma~\ref{lem:socle}, we know that ${\rm Soc}_d(G)\equiv \prod_{i\in I}{\rm Soc}(G_i) /\mathcal{U}$ is nontrivial and is a direct product of $j$ many minimal definable normal subgroups $M_r$ each of which is a direct product of finitely many (pseudo-)finite simple non-abelian groups. Thus, $G$ has at least one minimal normal definable subgroup.

We claim that $G$ has at most two minimal normal definable subgroups. If $G$ has two such subgroups $M_{1} \neq M_{2}$, then since by minimality $M_{1} \cap M_{2} = 1$ and both are normal, we have $\langle M_{1},M_{2} \rangle = M_{1} \times M_{2}$ and therefore $M_{2} \subseteq C_{G}(M_{1})$. Since $C_{G}(M_{1}) \unlhd G$, we know, by definable primitivity, that $C_{G}(M_{1})$ and $M_{1}$ both act regularly on $X$.  It follows, then, that $C_{G}(M_{1})$ is also a minimal definable normal subgroup of $G$, since any proper subgroup would act on $X$ intransitively. Therefore, $M_{2} = C_{G}(M_{1})$. Since $M_{2}$ was an arbitrary definable minimal normal subgroup distinct from $M_{1}$, we see that there cannot be three distinct definable minimal normal subgroups. 

Now we just need to note that the minimal normal definable subgroups of $G$ are infinite: by definable primitivity, nontrivial normal definable subgroups of $G$ act transitively on $X$. Since $X$ is infinite, this, in particular, implies that the minimal normal definable subgroups of $G$ are infinite. 

Finally note that for $r\in \{1,2\}$, the group $G$ acts transitively on $\{S_{r,1}, \ldots, S_{r,\ell_{r}}\}$ as $(G,X)$ is definably primitive.\end{proof}

\subsection{Primitivity}

The following has been observed under different assumptions in \cite[Proposition 6.1]{elwes2008measurable} and \cite[Lemma 2.6]{Elwesetal}.

\begin{lem} \label{lem:congruencefact}
    Suppose $(G,X)$ is a transitive supersimple permutation group with $X$ infinite. Define a relation $\sim$ on $X$ by $x \sim y$ if and only if $|G_{x} : G_{x} \cap G_{y}| < \infty$. Then $\sim$ is a definable $G$-invariant equivalence relation on $X$. 
\end{lem}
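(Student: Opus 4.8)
The plan is to recast $\sim$ as commensurability of point stabilisers and to verify the equivalence-relation axioms, $G$-invariance, and definability in turn, with supersimplicity entering in two distinct places. Writing $H \lesssim K$ for $|H : H \cap K| < \infty$, the defining condition is $x \sim y$ iff $G_x \lesssim G_y$. Reflexivity is immediate. For symmetry I would exploit transitivity of the action: any two stabilisers $G_x, G_y$ are conjugate, hence of equal $SU$-rank. If $x \sim y$, then $G_x \cap G_y$ is a definable finite-index subgroup of $G_x$, so $SU(G_x \cap G_y) = SU(G_x) = SU(G_y)$; since $G_x \cap G_y \leqslant G_y$ are definable subgroups of equal $SU$-rank, the Lascar equality (Fact~\ref{fact:Lascar-eq}) forces $SU(G_y/(G_x \cap G_y)) = 0$, and fineness makes this coset space finite, i.e.\ $G_y \lesssim G_x$. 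Thus $\sim$ is symmetric, and in fact $x \sim y$ is equivalent to $G_x$ and $G_y$ being commensurable.

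Granting symmetry, transitivity reduces to the elementary fact that commensurability of subgroups is transitive: if $G_x, G_y$ and $G_y, G_z$ are each commensurable, then from $|G_x : G_x \cap G_y \cap G_z| \leqslant |G_x : G_x \cap G_y| \cdot |G_y : G_y \cap G_z|$ (and its symmetric counterpart) together with $G_x \cap G_z \supseteq G_x \cap G_y \cap G_z$ one gets $|G_x : G_x \cap G_z| < \infty$, so $x \sim z$. For $G$-invariance I would note that for every $g \in G$ the stabiliser of $x^g$ is the conjugate $G_{x^g} = G_x^{\,g}$; hence $G_{x^g} \cap G_{y^g} = (G_x \cap G_y)^g$, and conjugation being an automorphism preserves the index, $|G_{x^g} : G_{x^g} \cap G_{y^g}| = |G_x : G_x \cap G_y|$. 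Therefore $x \sim y$ implies $x^g \sim y^g$.

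The substantive point is definability, since a priori the condition $|G_x : G_x \cap G_y| < \infty$ is not first-order. Here I would use that $\{G_x : x \in X\}$ is a uniformly definable family of subgroups of the supersimple group $G$, which therefore satisfies the relevant chain condition (icc$^0$). Applying the uniformity of commensurability in the almost normaliser (\cite[Corollary 2.9]{Karhumaki-Wagner2024}) to $G_x$, there is an $m < \omega$ such that whenever a conjugate $G_x^{\,g}$ is commensurable with $G_x$ one has $|G_x : G_x \cap G_x^{\,g}| < m$; since every $G_y$ is a conjugate of $G_x$ by transitivity, and the bound is preserved under the automorphism conjugating one stabiliser to another, a single $m$ works uniformly for all $x$. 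Consequently $x \sim y$ is equivalent to $|G_x : G_x \cap G_y| < m$, i.e.\ to the non-existence of $m$ elements of $G_x$ lying in pairwise distinct cosets of $G_x \cap G_y$, which is expressible by an $\mathcal{L}$-formula in $x, y$. I expect this uniform bound to be the main obstacle: reflexivity, transitivity and invariance are essentially formal once symmetry is in hand, whereas symmetry and definability are exactly the two places where supersimplicity does the real work, through rank and the Lascar equality in the former case and through Schlichting-type uniform commensurability in the latter.
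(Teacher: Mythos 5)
Your proof is correct in substance, but it takes a genuinely different route from the paper on both of the nontrivial points, and it has one generality caveat worth flagging. For symmetry, the paper does not argue via rank at all: it cites \cite[Lemma 2.6]{Elwesetal}, where symmetry of $\sim$ is deduced from NSOP (the relation $G_x \lesssim G_y$ is a definable preorder, and failure of symmetry would produce the strict order property). Your rank argument---conjugate stabilisers have equal $SU$-rank, a finite-index intersection has full rank, and an equal-rank definable subgroup has finite index---is a nice self-contained alternative, but note that you invoke Fact~\ref{fact:Lascar-eq}, which is stated for finite-dimensional groups with fine and \emph{additive} dimension, i.e.\ effectively for finite $SU$-rank, whereas the lemma assumes only supersimplicity (ordinal-valued $SU$-rank). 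The fix is standard: use the Lascar inequalities for $SU$-rank in supersimple theories (\cite{Wagner2000}), $SU(H) + SU(K/H) \leq SU(K)$, so that infinite index gives $SU(K/H)\geq 1$ and hence $SU(K) \geq SU(H)+1 > SU(H)$; with that replacement your symmetry argument survives in the stated generality. The NSOP route has the advantage of working in any simple theory; yours has the advantage of being quantitative and not outsourcing the combinatorics.

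For definability the divergence is similar. The paper fixes $x$ and quotes (the proof of) \cite[Lemma 4.2.6]{Wagner2000} to get a bound $n_x$ on the \emph{one-sided} index $|G_x : G_x \cap G_y|$ whenever it is finite---note this needs no prior symmetry---and then uses compactness to make the bound uniform in $x$. You instead establish symmetry first, recast $\sim$ as commensurability, and apply uniform commensurability in the almost normaliser via icc$^0$ and \cite[Corollary 2.9]{Karhumaki-Wagner2024}, with transitivity of the action and conjugation-invariance of indices replacing the compactness step (which is a clean observation and correct). The caveat is the same as above: the paper justifies icc$^0$ only for finite-dimensional groups with fine and additive dimension, so your sentence ``supersimple, \emph{therefore} icc$^0$'' is not covered by the toolkit you cite; for the lemma as stated, the safe citation is exactly Wagner's Lemma 4.2.6, as in the paper. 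Since the lemma is only ever applied in this paper to finite $SU$-rank groups, your proof is valid verbatim in every application, but as a proof of the lemma as stated it needs these two citation repairs.
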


\begin{proof}Fix $x \in X$. By (the proof of) \cite[Lemma 4.2.6]{Wagner2000}, there is some $n_{x} < \omega$ such that, whenever $G_{x} \cap G_{y}$ has finite index in $G_{x}$, then it has index at most $n_{x}$.  By compactness, there must be some $n < \omega$ such that $n_{x} \leq n$ for all $x \in X$. This entails that $\sim$ is definable. The fact that $\sim$ is reflexive is clear and the argument that $\sim$ is transitive and $x \sim y$ entails $x^{g} \sim y^{g}$ for all $g \in G$ was shown in \cite[Proposition 6.1]{elwes2008measurable}.  It was pointed out in \cite[Lemma 2.6]{Elwesetal}, then, that NSOP (and therefore supersimplicity) entails that $\sim$ is symmetric. 
\end{proof}

\begin{lem}\label{lemma:primitive}Suppose $(G,X)$ is a supersimple pseudo-finite definably primitive permutation group of finite $SU$-rank. If $\widetilde{N}_G(G_x)=G_x$, then $(G,X)$ is primitive. \end{lem}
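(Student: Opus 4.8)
The plan is to argue by contradiction, assuming $(G,X)$ is not primitive and producing a violation of one of the two hypotheses. The first thing to record is the precise meaning of $\widetilde{N}_G(G_x)=G_x$ for the geometry of the action: \emph{if $y\neq x$, then $G_x$ and $G_y$ are not commensurable.} Indeed, by transitivity write $y=x^g$; since $G_x^g=G_{x^g}=G_y$, commensurability of $G_x$ and $G_y$ would give $g\in\widetilde{N}_G(G_x)=G_x$, forcing $y=x^g=x$. Now suppose $(G,X)$ is not primitive, so $G_x$ is not maximal and there is a subgroup $G_x<H<G$. Let $B=x^H$ be the $H$-orbit of $x$; then $H$ acts transitively on $B$, $|B|=|H:G_x|>1$, and for each $y=x^h\in B$ we have $G_y=G_x^h\le H$. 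The essential difficulty is that $H$ need not be definable, so one cannot apply dimension theory to $H$ or to the (possibly non-definable) block $B$ directly.

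The second step manufactures a \emph{definable} subgroup out of this non-definable datum via the Indecomposability Theorem (Theorem~\ref{th:indec}). Consider the family $\{G_y:y\in B\}$ of definable point stabilisers. Although $H$ is not definable, each conjugation $c_h\colon a\mapsto a^h$ for $h\in H$ \emph{is} a definable automorphism of $G$, and $c_h(G_y)=G_{y^h}$ with $y^h\in B$; hence the family is invariant under the group $\Sigma=\{c_h:h\in H\}$ of definable automorphisms. Applying Theorem~\ref{th:indec} to $\{G_y:y\in B\}$ and this $\Sigma$, I obtain a definable, $\Sigma$-invariant subgroup $D\le\langle G_y:y\in B\rangle\le H$ with $|G_y:G_y\cap D|<\infty$ for every $y\in B$. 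The $\Sigma$-invariance says exactly that $D^h=D$ for all $h\in H$, so $H\le N_G(D)$; in particular $G_x$ normalises $D$, making $G_xD$ a definable subgroup contained in $H$.

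The final step is a dichotomy according to whether $D\le G_x$, and the point is that the two cases are dual to the two hypotheses. If $D\not\le G_x$, then $G_x<G_xD\le H<G$ is a definable subgroup strictly between $G_x$ and $G$; by the standard correspondence between subgroups $G_x\le K\le G$ and $G$-invariant equivalence relations on $X$, this yields a nontrivial definable $G$-invariant equivalence relation, contradicting definable primitivity. If instead $D\le G_x$, then for each $y=x^h\in B$ the $H$-invariance gives $D=D^h\le G_x^h=G_y$, so $D\le\bigcap_{y\in B}G_y$ and $|G_y:D|<\infty$ for all $y\in B$. Thus $D$ has finite index in both $G_x$ and $G_y$, so $G_x$ and $G_y$ are commensurable for every $y\in B$; by the first step this forces $y=x$, whence $B=\{x\}$, contradicting $|B|>1$.

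The main obstacle, as indicated, is the non-definability of $H$, and the device that overcomes it is to replace $H$ by the group $\Sigma$ of inner automorphisms it induces (these being individually definable) and to feed the $H$-invariant family of definable stabilisers into the ``moreover'' clause of the Indecomposability Theorem, which returns a genuinely definable, $H$-invariant subgroup $D$. A small point needing care is the reading of the conclusion ``$G_y/D$ is finite'' for subgroups, namely that it means $|G_y:G_y\cap D|<\infty$; note this also gracefully handles the degenerate possibility $D=1$, which forces each $G_y$ finite and hence (all subgroups then being of finite index) again makes $G_x$ and $G_y$ commensurable, so the $D\le G_x$ branch still closes.
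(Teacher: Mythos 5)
Your proof is correct and takes essentially the same route as the paper: both apply Theorem~\ref{th:indec} with its invariance clause to the family of conjugates of $G_x$ under a hypothetical intermediate subgroup, obtain a definable subgroup $D$ normalised by that subgroup, and contradict definable primitivity via the definable group $G_xD$. The only divergence is the closing step: the paper gets strictness $G_x < G_xL$ by an index computation in $\langle G_x^k : k\in K\rangle$, whereas you split on whether $D\leq G_x$ and rule that case out because it would make the stabilisers of all points in the block pairwise commensurable, contradicting $\widetilde{N}_G(G_x)=G_x$ --- an equally valid (and arguably more careful) way to finish.
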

\begin{proof}Suppose for a contradiction that there is $K$ with $G_x < K < G$. Let $k\in K\setminus G_x$. Then $\widetilde{N}_G(G_x)=G_x$ implies that the index $|G_x:G_x^k\cap G_x|$ is infinite. 
By Theorem~\ref{th:indec}, there is a finite index subgroup $L$ of $H = \langle G_x^k : k\in K\rangle$ which is a definable normal subgroup of $K$. Since $L$ is normal in $K$ and both $G_{x}$ and $L$ are definable, the subgroup $G_{x}L$ is a definable subgroup of $K$ (and of $G$). Moreover, $G_{x}$ has infinite index in $H$ and $|H : G_{x}L|$ is finite, hence $G_{x}$ is of infinite index in $G_{x}L$. This shows $G_{x} < G_{x}L < G$, contradicting definable primitivity. So $(G,X)$ is primitive. \end{proof}

\begin{thm}[Smith \cite{Smith2015}]\label{th:finite-ps} Let $(G,X)$ be an infinite primitive permutation group with $G_x$ finite. Then $G$ is finitely generated.
\end{thm}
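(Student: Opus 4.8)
The plan is to reduce everything to the classical correspondence between primitivity and maximality of the point stabiliser, after which finite generation falls out immediately from the finiteness of $G_x$. First I would confirm that a primitive permutation group on an infinite set is transitive. The partition of $X$ into $G$-orbits is a $G$-invariant equivalence relation, so primitivity forces it to be one of the two trivial relations; since faithfulness rules out the everywhere-trivial action on the infinite set $X$, this means there is a single orbit, i.e.\ $G$ is transitive. Fixing $x \in X$, I then identify $X$ with the coset space $G/G_x$, under which $|X| = |G : G_x|$ is infinite; in particular $G_x \neq G$, so there exists some $g \in G \setminus G_x$.

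The key step is to show that $G_x$ is a \emph{maximal} subgroup of $G$, and I would argue this by contraposition. Given any intermediate subgroup $G_x < K < G$, the set $B = x^{K} = \{x^{k} : k \in K\}$ together with its $G$-translates forms a system of blocks for the transitive action: because $G_x \leqslant K$, the setwise stabiliser of $B$ is exactly $K$, so the blocks $\{B^{g} : g \in G\}$ are the classes of a $G$-invariant equivalence relation on $X$, in bijection with the cosets of $K$. Since $G_x < K$, each block has size $|K : G_x| > 1$, and since $K < G$, there are $|G : K| > 1$ blocks; hence this equivalence relation is nontrivial, contradicting primitivity. Therefore no such $K$ exists and $G_x$ is maximal.

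With maximality in hand the conclusion is immediate. Taking the element $g \in G \setminus G_x$ produced in the first step, the subgroup $\langle G_x, g\rangle$ strictly contains $G_x$, so maximality forces $\langle G_x, g\rangle = G$. As $G_x$ is finite it is generated by finitely many elements $m_1, \dots, m_k$, and consequently $G = \langle m_1, \dots, m_k, g\rangle$ is finitely generated.

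I do not expect a genuine obstacle here: the argument is entirely elementary group theory and invokes finiteness of $G_x$ only in the final line. The sole point demanding any care is the passage between primitivity and maximality of the stabiliser, which rests on the standard block–subgroup correspondence for transitive actions together with the preliminary verification of transitivity. Both are routine, but they are worth spelling out, since the theorem is stated for permutation groups assumed only to be primitive rather than explicitly transitive.
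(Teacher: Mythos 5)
Your proof is correct, and it is worth noting at the outset that the paper itself contains no proof of this statement: Theorem~3.3 is quoted as an external result of Smith \cite{Smith2015}, so there is no internal argument to compare against. Your argument is a complete and self-contained verification of the statement as it is phrased in the paper. All three steps check out: the orbit partition is $G$-invariant, so primitivity plus faithfulness (and $G$ infinite, hence $G \neq 1$) forces transitivity; the block--subgroup correspondence you spell out is the standard one, and your verification that the translates of $B = x^{K}$ form a nontrivial invariant partition (blocks of size $|K:G_x| > 1$, and $|G:K| > 1$ of them) is sound, so $G_x$ is maximal; and then $\langle G_x, g\rangle = G$ for any $g \notin G_x$ exhibits $G$ as generated by the finitely many elements of $G_x$ together with $g$. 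Even the degenerate case $G_x = 1$ causes no trouble, since maximality of the trivial subgroup makes $G$ cyclic. What this comparison reveals is that the isolated statement ``infinite primitive with finite point stabilisers implies finitely generated'' reduces to the elementary observation that a group with a finite maximal subgroup is finitely generated; the citation to Smith reflects provenance rather than necessity, as Smith's actual theorem is a far stronger classification of all infinite primitive permutation groups with finite point stabilisers, of which finite generation is a consequence. For the use the paper makes of the statement (ruling out finite stabilisers in Theorem~3.6 via finite generation of pseudo-finite groups), your elementary argument would suffice in place of the citation.
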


As mentioned in the introduction, the direction of the following theorem establishing that definable primitivity implies primitivity for permutation groups with large point stabilisers, has appeared in the literature with somewhat stronger hypotheses by Elwes and Ryten for measurable theories \cite[Proposition 6.1]{elwes2008measurable}, and Elwes, Jaligot, Macpherson, and Ryten in pseudo-finite finite $SU$-rank theories with elimination of the quantifier $\exists^{\infty}$ \cite[Lemma 2.6]{Elwesetal}. The other direction is new. 

\begin{thm}\label{th:primitive}Suppose $(G,X)$ is a supersimple pseudo-finite definably primitive permutation group of finite $SU$-rank. Then $(G,X)$ is primitive if and only if $G_x$ is infinite.
\end{thm}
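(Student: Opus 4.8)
The plan is to prove the two implications separately. For the direction that an infinite point stabiliser forces primitivity, I would reduce to Lemma~\ref{lemma:primitive}: it suffices to show that if $G_x$ is infinite then $\widetilde{N}_G(G_x) = G_x$. For the converse, that primitivity forces $G_x$ to be infinite, I would argue by contraposition, assuming $G_x$ is finite and deriving a contradiction from Smith's Theorem~\ref{th:finite-ps} together with the structural dichotomy of Proposition~\ref{propo:socle}.

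For the direction $G_x$ infinite $\Rightarrow$ primitive, suppose towards a contradiction that $\widetilde{N}_G(G_x) \neq G_x$ and pick $g \in \widetilde{N}_G(G_x) \setminus G_x$. Since $G_x^g = G_{x^g}$ and $g \notin G_x$ we have $x^g \neq x$, while commensurability of $G_x$ and $G_x^g$ gives $|G_x : G_x \cap G_{x^g}| < \infty$, i.e. $x \sim x^g$ for the relation $\sim$ of Lemma~\ref{lem:congruencefact}. Thus $\sim$ is a definable $G$-invariant equivalence relation strictly larger than equality, so by definable primitivity it is all of $X \times X$. Then the family $\{G_y : y \in X\}$ is uniformly commensurable (uniformity from Lemma~\ref{lem:congruencefact}), and Schlichting's Theorem~\ref{th:commensurable} produces a definable subgroup $N$ commensurable with $G_x$; it is normal because conjugation permutes $\{G_y\}$ and so fixes the family setwise. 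As $G_x$ is infinite, $N$ is an infinite definable normal subgroup, hence acts transitively on $X$ by definable primitivity. A Lascar-equality computation (Fact~\ref{fact:Lascar-eq}) gives $SU(N) = SU(N \cap G_x) + SU(X)$, while commensurability gives $SU(N) = SU(N \cap G_x)$, forcing $SU(X) = 0$ and hence $X$ finite, a contradiction. Therefore $\widetilde{N}_G(G_x) = G_x$ and Lemma~\ref{lemma:primitive} applies.

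For the direction primitive $\Rightarrow G_x$ infinite, assume $G_x$ is finite. Since $G$ is pseudo-finite it is infinite, so $X$ is infinite, and Smith's Theorem~\ref{th:finite-ps} shows $G$ is finitely generated; I would contradict this via Proposition~\ref{propo:socle}. If $\mathrm{Rad}(G) \neq 1$, then $G = A \rtimes G_x$ with $A$ an infinite abelian normal subgroup of finite index $|G_x|$; a finite-index subgroup of a finitely generated group is finitely generated, so $A$ is finitely generated, contradicting that $A$ is an infinite elementary abelian $p$-group or an infinite torsion-free divisible abelian group. If $\mathrm{Rad}(G) = 1$, let $S$ be one of the simple factors of $\mathrm{Soc}_d(G)$, a (twisted) Chevalley group over a pseudo-finite field $F$ by Theorem~\ref{th:wilson}. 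The setwise stabiliser $N_G(S)$ of this factor has finite index in $G$ (as $G$ permutes the factors transitively), so it is finitely generated, and hence so is $\bar{N} := N_G(S)/C_G(S) \leq \mathrm{Aut}(S)$. By Lemma~\ref{lem: field aut} and Fact~\ref{fact:field-autom}, supersimplicity forces the group of field automorphisms occurring in $\bar N$ to be finite; together with the finiteness of the diagonal and graph automorphisms (the former since $F^\times/(F^\times)^n$ is finite for pseudo-finite $F$), this shows $\mathrm{Inn}(S) \cong S$ has finite index in $\bar N$, whence $S$ is finitely generated. But then $F$ is a finitely generated field, contradicting the standard fact that pseudo-finite fields are not finitely generated.

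The main obstacle is the second direction, and within it the semisimple case $\mathrm{Rad}(G) = 1$. The naive hope that a finitely generated group cannot contain an infinite simple pseudo-finite section fails because $\mathrm{Aut}(X(F))$ contains the infinite group $\mathrm{Aut}(F)$ of field automorphisms, so finite generation of $\bar N$ does not visibly descend to the simple factor $S$. The crux is therefore to use supersimplicity, through the field-interpretation Lemma~\ref{lem: field aut} and Fact~\ref{fact:field-autom}, to bound the field automorphisms appearing in $\bar N$ and thus force $\mathrm{Inn}(S)$ to have finite index in $\bar N$; the final contradiction then rests on the fact that a pseudo-finite field is never finitely generated.
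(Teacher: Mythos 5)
Your proof is correct, and its top-level skeleton is the paper's: Lemma~\ref{lemma:primitive} together with the relation $\sim$ of Lemma~\ref{lem:congruencefact} and Schlichting's Theorem~\ref{th:commensurable} for the direction ``$G_x$ infinite $\Rightarrow$ primitive'', and Smith's Theorem~\ref{th:finite-ps} combined with the dichotomy of Proposition~\ref{propo:socle} for the converse. The divergences are real but local. In the first direction, the paper splits into cases according to whether the $\sim$-classes are trivial, treating the possibility $N_G(G_x)=G$ separately; you observe instead that any $g \in \widetilde{N}_G(G_x)\setminus G_x$ already produces a nontrivial $\sim$-class, which collapses both of the paper's cases into a single argument, and your Lascar-equality computation $SU(N)=SU(N\cap G_x)+SU(X)$ replaces the paper's equivalent (and slightly more elementary) observation that such an $N$ would have finite orbits while acting transitively on the infinite set $X$. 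More substantially, in the converse direction with $\mathrm{Rad}(G)=1$, the paper cites \cite[Lemma 2.2]{Palacin2018} to pass finite generation down to a simple component $S$ and \cite[Proposition 3.14]{Point-Houcine2013} to conclude that a finitely generated simple pseudo-finite group is finite; you re-derive this via Steinberg's factorisation of $\mathrm{Aut}(S)$, bounding the field automorphisms appearing in $\bar{N}=N_G(S)/C_G(S)$ by Lemma~\ref{lem: field aut} and Fact~\ref{fact:field-autom} -- exactly the mechanism the paper itself deploys in Lemma~\ref{lem: red to simple} -- so that $\mathrm{Inn}(S)$ has finite index in the finitely generated group $\bar{N}$, and then conclude from two facts asserted as standard: that $S = X(F)$ finitely generated forces $F$ finitely generated (true: the finitely many generators, viewed as matrices in $\mathrm{GL}(\mathbb{L}_F)$ as in Lemma~\ref{lem: field aut}, have entries in a finitely generated subfield $F_0$, and every root element $x_\alpha(t)$ has some entry equal to a nonzero integer multiple of $t$, whence $F=F_0$), and that no pseudo-finite field is finitely generated (also true: pseudo-finite fields have exactly one extension of each degree, whereas finitely generated infinite fields admit many inequivalent quadratic or Artin--Schreier extensions; alternatively, finitely generated infinite fields are never PAC). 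Similarly, in the case $\mathrm{Rad}(G)\neq 1$ you replace the paper's citation of \cite[Proposition 3.3]{Point-Houcine2013} by the direct observation that an infinite elementary abelian $p$-group or torsion-free divisible group is not finitely generated, which is valid given the structure supplied by Proposition~\ref{propo:socle}. What your route buys is a more self-contained argument staying inside the Chevalley-group machinery the paper has already set up (Theorems~\ref{th:wilson} and Lemma~\ref{lem: field aut}); what the paper's route buys is brevity and a cleaner reduction to quotable black boxes. The only polish I would ask for is to spell out the two ``standard facts'' above, and to note that the uniform bound making $\{G_y : y \in X\}$ uniformly commensurable comes from the compactness step inside the proof of Lemma~\ref{lem:congruencefact}, not from its statement.
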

\begin{proof} Assume first that the point stabiliser $G_x$ is finite and suppose, towards a contradiction, that $(G,X)$ is primitive. Theorem~\ref{th:finite-ps} implies that $G$ is finitely generated. We now invoke Proposition~\ref{propo:socle}(1): if ${\rm Rad}(G)\neq 1$ then $G$ is a finite extension of an abelian group $A$. Thus $A$ is a finitely generated pseudo-finite abelian group and hence finite by \cite[Proposition 3.3]{Point-Houcine2013}. This contradicts the infiniteness of $G$. The case ${\rm Rad}(G) =1$ works as in \cite[Theorem 1.2]{Karhumaki2021}: ${\rm Rad}(G)  =1$ implies that ${\rm Soc}_d(G)$ is a product of finitely many simple pseudo-finite (so, in particular, infinite) groups (this follows from Lemma~\ref{lem:socle} and from the fact that definable normal subgroups must be infinite as $(G,X)$ is definably primitive). Thus, each such simple component is normal in a finite index subgroup $G_0$ of $G$. But $G_0$ is finitely generated and thus, \cite[Lemma 2.2]{Palacin2018} implies that any of the components, say $S$, is finitely generated; being simple, such $S$ is finite by \cite[Proposition 3.14]{Point-Houcine2013}. This contradiction proves that $(G,X)$ is not primitive.

So assume that $G_x$ is infinite. By Lemma~\ref{lemma:primitive}, to show that $(G,X)$ is primitive, it is enough to show that $\widetilde{N}_G(G_x)=G_x$. 

Define $\sim$ on $X$ by $x \sim y \Leftrightarrow [G_x : G_x\cap G_y] < \infty$. By Lemma~\ref{lem:congruencefact} and the definable primitivity of $(G,X)$, either $\sim$ has exactly one class or all its classes are trivial. 

Suppose towards a contradiction that $\sim$-classes are not trivial. By definable primitivity, we know that either $\widetilde{N}_G(G_x)=G_x$ or $\widetilde{N}_G(G_x)=G$ for any $x\in X$. The former case implies primitivity. So we may assume that $G=\widetilde{N}_G(G_x)$ for each $x\in X$. Then all point-stabilisers $G_x$ are uniformly commensurable. Now Theorem~\ref{th:commensurable} implies that $G$ has a definable normal subgroup $N$ uniformly commensurable with each $G_x$. So the orbit of any $x \in X$ under $N$ must be finite. But the infinite group $N$, being a normal and definable subgroup of $G$, acts transitively on the infinite set $X$ (recall that $X$ is infinite as $G$ is infinite and acts on $X$ faithfully); a contradiction. 

By the above, we may assume that $\sim$-classes all have size $1$. Then, for each $x\in X$, $\widetilde{N}_G(G_x)=N_G(G_x)$. If $N_G(G_x)=G_x$ then we are done, so, by definable primitivity, we may assume that $N_G(G_x)=G$. Since $(G,X)$ is definably primitive and $1 \neq G_{x} \unlhd G$, we must have that $G_{x}$ acts on $X$ transitively, which is impossible since $G_{x}$ is the stabiliser of $x$. This contradiction completes the proof.\end{proof}

\begin{cor}\label{cor:bounded}Let $\mathcal{C}$ be a class of finite primitive permutation groups such that every
nonprincipal ultraproduct of members of $\mathcal{C}$ is definable in a structure with supersimple theory of finite $SU$-rank. Assume that for $(G,X)\in \mathcal{C}$ and $x\in X$, $|G_x|\rightarrow \infty$ as $|X| \rightarrow \infty$. Then $\mathcal{C}$ is a bounded class.  
\end{cor}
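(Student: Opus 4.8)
The plan is to argue by contradiction, transferring the problem to the setting of Theorem~\ref{th:primitive} via an ultraproduct. Suppose $\mathcal{C}$ is not bounded. Then for every $d \in \mathbb{N}$ there is a member of $\mathcal{C}$ whose orbital graph has diameter exceeding $d$, so we may fix a sequence $(G_i, X_i) \in \mathcal{C}$ whose orbital graphs have diameter tending to infinity. Since a transitive permutation group is primitive exactly when its nontrivial orbital graphs are connected, these graphs have diameter at most $|X_i| - 1$, so $|X_i| \to \infty$; by the standing hypothesis on point stabilisers this forces $|(G_i)_{x_i}| \to \infty$ as well.

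Next I would form a nonprincipal ultraproduct $(G,X) = \prod_{i} (G_i, X_i)/\mathcal{U}$ and check that it satisfies the hypotheses of Theorem~\ref{th:primitive}. First, $(G,X)$ is definably primitive: a definable equivalence relation on $X$ pulls back, via \L o\'s' theorem, to equivalence relations on $\mathcal{U}$-many of the $X_i$, each of which is trivial since $(G_i,X_i)$ is primitive (a finite primitive group preserves no nontrivial equivalence relation at all), whence the relation on $X$ is trivial. Second, by the hypothesis on $\mathcal{C}$, the permutation group $(G,X)$ is definable in a supersimple structure of finite $SU$-rank and is therefore itself pseudo-finite of finite $SU$-rank. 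Third, the point stabiliser $G_x = \prod_i (G_i)_{x_i}/\mathcal{U}$ is a nonprincipal ultraproduct of finite groups of unbounded order, hence infinite.

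With these hypotheses in place, Theorem~\ref{th:primitive} yields that $(G,X)$ is in fact \emph{primitive}, not merely definably so. At this point I would invoke the observation recorded immediately before Theorem~\ref{thm:LMT}: since $(G,X)$ is a primitive nonprincipal ultraproduct of finite primitive permutation groups, the factors $(G_i,X_i)$ must, modulo $\mathcal{U}$, belong to a bounded class, i.e.\ their orbital graphs have diameter at most some fixed $d$ on a $\mathcal{U}$-large set. This contradicts the choice of the sequence: for that $d$, the set $\{i : \mathrm{diam} > d\}$ is cofinite and hence lies in every nonprincipal ultrafilter. The contradiction shows that $\mathcal{C}$ is bounded.

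The step requiring the most care is the bookkeeping that guarantees the ultraproduct genuinely lands in the scope of Theorem~\ref{th:primitive}: one must confirm that the hypothesis $|G_x| \to \infty$ yields infiniteness of the ultraproduct stabiliser (which needs $|X_i| \to \infty$ along $\mathcal{U}$, supplied by the diameter blow-up), and that definable primitivity passes to the ultraproduct. The genuinely substantive input --- that an infinite point stabiliser upgrades definable primitivity to primitivity --- is precisely Theorem~\ref{th:primitive}, and the final contradiction is the transfer principle from \cite{LMT}.
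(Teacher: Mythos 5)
Your proposal is correct and follows essentially the same route as the paper: the paper's proof is a one-line citation of \cite[Corollary 4.4]{LMT} with ``measurable'' replaced by ``of finite $SU$-rank'', and that argument is exactly what you have written out --- pass to an ultraproduct of a sequence with orbital diameters tending to infinity, verify definable primitivity and infinite stabilisers, upgrade to primitivity via Theorem~\ref{th:primitive} (playing the role of the measurable-case primitivity result in \cite{LMT}), and conclude boundedness modulo $\mathcal{U}$ from the transfer principle recorded before Theorem~\ref{thm:LMT}, contradicting the choice of the sequence. Your bookkeeping (Higman's connectivity criterion forcing $|X_i|\to\infty$, hence $|(G_i)_{x_i}|\to\infty$, and the \L o\'s transfer of definable primitivity) is accurate and complete.
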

\begin{proof}The proof goes exactly as in \cite[Corollary 4.4]{LMT} with `measurable' replaced by `of finite $SU$-rank'.
\end{proof}

\begin{rem} In \cite[Introduction]{LMT}, it is asked whether there is an example of a primitive pseudo-finite permutation group such that in any/some $\omega$-saturated model of its theory, the group is not primitive. Theorem~\ref{th:primitive} in particular shows that in the finite $SU$-rank context such an example does not exist.\end{rem}

\begin{rem} There are pseudo-finite definably primitive permutation groups of finite $SU$-rank which are not primitive. Below we give an example which resembles the example by Macpherson and Pillay of a definably primitive permutation group of finite Morley rank which is not primitive \cite[p. 496]{Macpherson-Pillay1995}.

Let $H$ be a non-abelian finite group and let $I$ be an infinite set of primes not dividing $|H|$. For each $p \in I$, let $V_{p}$ be an irreducible $H$-representation, where $V_{p}$ is an $\mathbb{F}_{p}$-vector space of finite dimension $>1$.  We may assume that $\mathrm{dim}(V_{p})$ is the same for all $p \in I$ (by Maschke's theorem and pigeonhole principle). Notice that for each $p \in I$, the permutation group $( V_{p} \rtimes H,  V_{p})$ is primitive since the action is transitive and the stabiliser $(V_{p} \rtimes H)_0$ of $0 \in V_{p}$ is $H$, which is maximal by the irreducibility of the action of $H$ on $V_{p}$.  

Let $\mathcal{U}$ be a nonprincipal filter on $I$ and let $(V \rtimes H,V) = \prod_{p \in I} ( V_{p} \rtimes H,  V_{p})/\mathcal{U}$.  Because $(V \rtimes H,V)$ is an ultraproduct of primitive permutation groups, it is definably primitive. Also, $(V \rtimes H, V) $ is interpretable in the ultraproduct $\mathbb{F} = \prod_{i \in I} \mathbb{F}_{p}/\mathcal{U}$ so it is of finite $SU$-rank. However, $V$ is a vector space over $\mathbb{F}$ and $|\mathbb{F}| = 2^{\aleph_{0}}$ so $V$ is uncountable. Since $H$ is finite, there must be a countable $H$-invariant subgroup $U \leq V$. Then we have 
$$
(V_{p} \rtimes H)_0=H < UH < V \rtimes H,
$$
so $(V \rtimes H,V)$ is not primitive.
\end{rem}

It is worth noting that direction $\Leftarrow$ of Theorem~\ref{th:primitive} is known in the finite Morley rank setting, without pseudo-finiteness assumption \cite[Proposition 2.7]{Macpherson-Pillay1995}: \emph{if $(G, X)$ is a definably primitive permutation group of finite Morley rank with infinite point stabilisers, then $(G, X)$ is primitive.} There is no known infinite definably primitive permutation group of finite Morley rank $(G,X)$ with finite point stabilisers, which is primitive. Indeed, by Theorem~\ref{th:finite-ps}, the existence of such a group should yield the existence of a finitely generated infinite simple group of finite Morley rank, thus providing a counterexample to the famous Cherlin-Zilber conjecture. Therefore, we believe that part (a) in the question below has a positive answer.

The direction $\Rightarrow$ of Theorem~\ref{th:primitive} should be easy to generalise to the wider setting of finite-dimensional groups with fine and additive dimension, since a) this is the level of generality in Proposition~\ref{propo:socle}(1) and b) showing Lemma~\ref{lem:socle} in this setting should be straightforward (using results from finite group theory allowing us to drop the use of Theorem~\ref{th:indec}). Moreover, it is likely that, as many results on pseudo-finite groups of finite $SU$-rank, Theorem~\ref{th:primitive} can be fully generalised to this wider context. The challenge in the direction $\Leftarrow$ is the lack of an indecomposability result (Theorem~\ref{th:indec} does not generalise to this context).

\begin{quest}\label{q:prim} To what extent Theorem~\ref{th:primitive} can be generalised? In particular:\begin{enumerate}[(a)]
\item Given an infinite definably primitive permutation group $(G,X)$ of finite Morley rank, if the point stabilisers are finite, does it follow that $G$ is non-primitive?
\item Can Theorem~\ref{th:primitive} be proven without using Theorem~\ref{th:indec}, so that it holds for any finite-dimensional pseudo-finite definably primitive permutation group $(G,X)$ with fine and additive dimension? 
\end{enumerate}
\end{quest}

\section{Finding bounds}\label{sec:bounds}

Let $(G,X)$ be a pseudo-finite definably primitive permutation group of finite $SU$-rank. In the proof of Theorem~\ref{th: main}, we may assume $(G,X)$ is equal to a nonprincipal ultraproduct $\prod (G_{i},X_{i})/\mathcal{U}$ where each $(G_{i},X_{i})$ is a finite primitive permutation group. If $(G,X)$ has a finite point stabiliser $G_x$ then $SU(G)=SU(G/G_x)=SU(X)$. So, to prove Theorem~\ref{th: main}, we may assume that the point stabilisers $G_{x}$ are infinite. Thus, by Theorem \ref{th:primitive}, $(G,X)$ is primitive. It follows, then, that we may apply Theorem~\ref{thm:LMT}. So $(G,X)$ is of one of the types from Theorem~\ref{thm:LMT}:  \begin{enumerate}
\item an affine group,
\item an almost simple group,
\item of simple diagonal action type, or
\item of product action type.
\end{enumerate} A case-by-case analysis in this section gives the bounds of Theorem~\ref{th: main}. 

In what follows, when we say that a pseudo-finite group $(G,X)$ is of affine (resp. almost simple, simple diagonal action or product action) type it is assumed that the point stabilisers are infinite, that is, that $(G,X)$ is actually primitive and thus elementarily equivalent to a non-principal ultraproduct of permutation groups of affine (resp. almost simple, simple diagonal action or product action) type.



\subsection{Affine groups}

Our first observation is just an easy corollary of Proposition~\ref{propo:socle}:

\begin{lem}\label{lemma:rad-abelian}Suppose $(G,X)$ is a pseudo-finite finite-dimensional definably primitive permutation group with fine and additive dimension. Assume that ${\rm Rad}(G) \neq 1$. Then, if $G_x$ is abelian, then ${\rm dim}(G) \leq 2 {\rm dim}(X)$.  
\end{lem}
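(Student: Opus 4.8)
The plan is to combine the structural decomposition from Proposition~\ref{propo:socle}(1) with the Field Interpretation theorem (Theorem~\ref{th:field}) and the Lascar equality (Fact~\ref{fact:Lascar-eq}). Since ${\rm Rad}(G)\neq 1$, Proposition~\ref{propo:socle}(1) gives $G = A\rtimes G_x$, where $A = {\rm Soc}_d(G) = C_G(A)$ is the unique proper definable normal abelian subgroup of $G$, with ${\rm dim}(A)={\rm dim}(X)$. Applying Fact~\ref{fact:Lascar-eq} to the definable subgroup $A$ and using the interpretable isomorphism $G/A\cong G_x$, I would first record the identity ${\rm dim}(G) = {\rm dim}(A) + {\rm dim}(G/A) = {\rm dim}(X) + {\rm dim}(G_x)$. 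Thus the whole lemma reduces to proving the single inequality ${\rm dim}(G_x)\le {\rm dim}(X)={\rm dim}(A)$. If $G_x$ is finite this is immediate, since then ${\rm dim}(G_x)=0$; so the substantive case is $G_x$ infinite, which I treat via the field interpretation.

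To set up Theorem~\ref{th:field} I would take $H:=G_x$ (abelian, hence finite-by-abelian, and infinite) together with the infinite abelian $A$, and verify its two hypotheses. Faithfulness is immediate: $C_{G_x}(A)=G_x\cap C_G(A)=G_x\cap A=1$, using $A=C_G(A)$ and that $A$ acts regularly (so $A\cap G_x=1$). For the invariance condition I would show that the only definable $G_x$-invariant subgroups of $A$ are $1$ and $A$: if $B\le A$ is definable and $G_x$-invariant, then since $A$ is abelian it normalises $B$, and as $G = AG_x$ this forces $B\unlhd G$; being a definable normal abelian subgroup, the uniqueness clause of Proposition~\ref{propo:socle}(1) gives $B=1$ or $B=A$. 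In particular $A$ has no nontrivial proper definable $G_x$-invariant subgroup of infinite index, so the relaxed form of Theorem~\ref{th:field} (available because $G_x$ is abelian) yields an interpretable pseudo-finite field $F$ with $A\cong F^+$ and an embedding $G_x\hookrightarrow F^\times$.

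The final step is the dimension count across the interpretation. Because $F$ is interpreted with additive group $A$, we have ${\rm dim}(F)={\rm dim}(F^+)={\rm dim}(A)={\rm dim}(X)$, and ${\rm dim}(F^\times)={\rm dim}(F)$ since $F^\times$ differs from $F$ by a single point (algebraicity plus the union axiom). As the embedding $G_x\hookrightarrow F^\times$ is interpretable, $G_x$ is isomorphic to a definable subgroup of $F^\times$, whence ${\rm dim}(G_x)\le {\rm dim}(F^\times)={\rm dim}(X)$. Substituting into ${\rm dim}(G)={\rm dim}(X)+{\rm dim}(G_x)$ gives ${\rm dim}(G)\le 2{\rm dim}(X)$.

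The main obstacle will be the careful verification of the hypotheses of Theorem~\ref{th:field}: concretely, translating the minimality of $A$ as a definable normal subgroup of $G$ into the required absence of definable $G_x$-invariant subgroups of intermediate size, and ensuring that the isomorphism $A\cong F^+$ and the embedding $G_x\hookrightarrow F^\times$ are genuinely interpretable, so that the dimensions computed in $G$ and in $F$ coincide and the inequality ${\rm dim}(G_x)\le{\rm dim}(F^\times)$ is legitimate.
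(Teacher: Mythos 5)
Your proposal is correct and takes essentially the route the paper itself cites first: the paper's proof opens by stating that the claim ``directly follows from Proposition~\ref{propo:socle} and Theorem~\ref{th:field}'', and your argument supplies exactly the missing verifications --- $C_{G_x}(A)=G_x\cap C_G(A)=G_x\cap A=1$, the reduction of $G_x$-invariant definable subgroups of $A$ to $\{1,A\}$ via normality in $G=AG_x$ and the uniqueness of $A$, and the Lascar-equality bookkeeping ${\rm dim}(G)={\rm dim}(A)+{\rm dim}(G_x)$ with ${\rm dim}(G_x)\leq{\rm dim}(F^\times)={\rm dim}(X)$. For comparison, the paper also records an elementary alternative that avoids Theorem~\ref{th:field} entirely: for nontrivial $a\in A$ it uses ${\rm dim}(G)={\rm dim}(a^G)+{\rm dim}(C_G(a))$ and commutativity of $G_x$ (noting $Z(C_G(a^G))=A$) to get $C_G(a)=A$ and $a^G\subseteq A$, giving the same bound without invoking the field interpretation machinery.
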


\begin{proof}The claim directly follows from Proposition~\ref{propo:socle} and Theorem~\ref{th:field}. It can also be easily observed without Theorem~\ref{th:field}: by Proposition~\ref{propo:socle}, we have $G=A\rtimes G_x$, with $C_G(A)=A$ and ${\rm dim}(X)={\rm dim}(A)$. Also, $A$ is the minimal definable $G_x$-invariant subgroup of $G$. Assume that $G_x$ is abelian. Let $a\in A$ be nontrivial. We have that $${\rm dim}(G)={\rm dim}(a^G)+{\rm dim}(C_G(a)),$$ $a^G=a^{G_x}$ and, by commutativity of $G_x$, if $y\in G_x \cap C_G(a)$, then $y\in C_G(a^G)=C_G(a^{G_x})$. But $Z(C_G(a^G))$ is a proper normal abelian definable subgroup of $G$, thus equal to $A$. So $y\in C_G(A) \cap G_x =A \cap G_x=1$. So $C_G(a)=A$ and $a^G \subseteq A$; whence ${\rm dim}(a^G) \leqslant {\rm dim}(A)$ and ${\rm dim}(G) \leqslant 2 {\rm dim}(A)=2 {\rm dim}(X)$.
\end{proof}

\begin{rem}Due to Theorem~\ref{thm:K-W}, Proposition~\ref{propo:socle}(1) and hence Lemma~\ref{lemma:rad-abelian} do not use the classification of finite simple groups. All the rest of our results heavily rely on CFSG.
\end{rem}

\begin{lem}\label{lem:affinecase}Suppose $(G,X)$ is a definably primitive pseudo-finite permutation group of finite $SU$-rank of affine type. Then if $r=SU(X)$, we have $SU(G) \leqslant r + (r^2+1)r$.

Further, if $r=1$ then $SU(G) = 2$ and there is an interpretable pseudo-finite field $F$ of $SU$-rank $1$, such that $(G,X)$ is definably isomorphic to $(F^+\rtimes G_x, F^+)$ where $G_x$ is a finite index subgroup of $ F^\times$.
\end{lem}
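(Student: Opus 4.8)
The plan is to reduce everything to rank arithmetic on the semidirect decomposition supplied by Proposition~\ref{propo:socle} and then read off the structure of the point stabiliser from the affine case of Theorem~\ref{thm:LMT}.

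First I would record the global decomposition. Since we are in the affine case, ${\rm Rad}(G)\neq 1$, so Proposition~\ref{propo:socle}(1) gives $G = A\rtimes G_x$ with $A = \mathrm{Soc}_d(G) = C_G(A)$ and $SU(A) = SU(X) = r$. As $A$ is a definable normal subgroup and $G/A \cong G_x$ definably, Fact~\ref{fact:Lascar-eq} yields $SU(G) = SU(A) + SU(G_x) = r + SU(G_x)$, so the whole problem reduces to bounding $SU(G_x)$. Next I would invoke Theorem~\ref{thm:LMT}(i) to present $(G,X)$ as $(VH,V)$ with $V = V_d(K)$ over a pseudo-finite field $K$ and $G_x = H \leq \mathrm{GL}_d(K)$ irreducible. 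The key numerical input is that a pseudo-finite field has $SU$-rank $1$, so $r = SU(V) = d\cdot SU(K) = d$; that is, $d = r$.

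For the general bound I would simply note that $H$ is a definable subgroup of $\mathrm{GL}_d(K)$, which is a definable subset of $M_d(K)\cong K^{d^2}$, so $SU(G_x) = SU(H) \leq SU(\mathrm{GL}_d(K)) = d^2\cdot SU(K) = r^2$. Hence $SU(G) = r + SU(G_x) \leq r + r^2$, which is in particular bounded by $r + (r^2+1)r$. The argument in fact gives the sharper bound $r+r^2$; the looser form in the statement is all that is needed downstream. For the case $r = 1$, the equality $d = r$ forces $d = 1$, so $V = V_1(K) = K^+$ and $H \leq \mathrm{GL}_1(K) = K^\times$; in particular $H$ is abelian. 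I would then apply the Field Interpretation theorem (Theorem~\ref{th:field}) with $A = K^+$ and $H$. The hypotheses are checked as follows: $H$ acts faithfully on $A$, so $C_H(A) = 1$; $A$ is abelian; and $H$ is abelian, so I may use the relaxed form of the invariant-subgroup hypothesis (dropping ``almost''). It remains to verify that $A$ has no proper definable $H$-invariant subgroup of infinite index. Any proper definable subgroup $B < A$ of infinite index has $SU(B) < SU(A) = 1$, hence $SU(B) = 0$ and $B$ is finite; but a nontrivial finite $H$-invariant subgroup is impossible, since for $0\neq b\in B$ the set $Hb\subseteq B$ is infinite (as $H\leq K^\times$ is infinite and $h\mapsto hb$ is injective). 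Theorem~\ref{th:field} then produces an interpretable pseudo-finite field $F$ with $A\cong F^+$ and a definable embedding $H\hookrightarrow F^\times$ identifying the action of $H$ with multiplication. Since $SU(F) = SU(F^+) = SU(A) = 1$, this $F$ has $SU$-rank $1$. Because the standing convention in this section guarantees $G_x = H$ is infinite, $SU(H)\geq 1$; as $H$ embeds in $F^\times$ with $SU(F^\times) = 1$ we get $SU(H) = 1$, so $SU(G) = 1 + 1 = 2$, and by Lascar equality applied to $H \leq F^\times$ the index $[F^\times : H]$ is finite. Transporting the semidirect product along these identifications gives $(G,X)\cong (F^+\rtimes G_x, F^+)$ with $G_x$ a finite-index subgroup of $F^\times$, as required.

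The routine parts are the rank arithmetic and the reduction via Proposition~\ref{propo:socle}; the one step requiring genuine care is the $r=1$ application of Theorem~\ref{th:field}, specifically verifying the no-proper-invariant-subgroup hypothesis (where one must rule out nontrivial finite invariant subgroups) and then extracting both the finite index of $H$ in $F^\times$ and the definability of the isomorphism $(G,X)\cong(F^+\rtimes G_x,F^+)$ from the interpretation. Establishing $d = r$ via $SU(K)=1$ is what makes the general bound fall out immediately, so I would make sure that identification is stated cleanly up front.
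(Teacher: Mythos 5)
Your reduction via Proposition~\ref{propo:socle} and the Lascar equality $SU(G)=r+SU(G_x)$ is fine, but the general bound has a genuine gap, in two layers. First, the claim that $SU(K)=1$, hence $d=r$, is unjustified: $SU$-rank is computed in the ambient $\mathcal{L}$-structure, and an interpretable pseudo-finite field can have ambient rank strictly greater than $1$. The paper itself depends on this: in the twisted Chevalley analysis the quadratic extension satisfies $SU(K)=2\,SU(F)$, and all the rank formulas there carry a factor $SU(F)\geq 1$ rather than $SU(F)=1$. From $r=d\cdot SU(K)$ you may conclude only $d\leq r$. (Your arithmetic would in fact survive this correction, since $SU(\mathrm{GL}_d(K))=d^{2}\,SU(K)=d\cdot r\leq r^{2}$ --- but see the second point.) Second, and more fundamentally, Theorem~\ref{thm:LMT}(i) presents $(G,X)$ as $(VH,V)$ only through the ultraproduct, i.e.\ up to elementary equivalence: the paper's own proof writes $A\rtimes G_x \equiv V\rtimes H$. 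The field $K$, the linear structure on $V$, and the group $\mathrm{GL}_d(K)$ are not known to be interpretable in the ambient structure in which $SU$ lives (there is no analogue here of Ryten's bi-interpretability, which is what legitimises such computations in the almost simple case via Lemma~\ref{lem: dimension inequality}). So the computations $SU(V)=d\cdot SU(K)$ and $SU(G_x)\leq SU(\mathrm{GL}_d(K))$ are performed in the wrong structure; only first-order properties transfer across the elementary equivalence.

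This is exactly the needle the paper threads: it proves, on the $(V,H)$ side, the first-order-transferable statement that chains of centralisers of the form $C_{\mathrm{GL}_d(K)}(Y)$ with $Y\subseteq V$ have length at most $d^{2}+1\leq r^{2}+1$, so that $C_H(V)=1$ is already the centraliser of some $r^{2}+1$ vectors; by \L o\'s/elementary equivalence there are $a_1,\ldots,a_{r^2+1}\in A$ with $C_{G_x}(a_1,\ldots,a_{r^2+1})=1$, whence $g\mapsto g(a_1)\cdots g(a_{r^2+1})$ gives a definable injection $G_x\hookrightarrow A^{r^2+1}$ \emph{in the ambient structure}, yielding $SU(G_x)\leq (r^{2}+1)r$. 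That is the source of the bound $r+(r^{2}+1)r$; your ``sharper'' bound $r+r^{2}$ is not established by your argument. Your $r=1$ analysis is salvageable and close to the paper's: $d\leq r$ still forces $d=1$, abelianness of $H\leq K^{\times}$ is first-order and so transfers to $G_x$, and Theorem~\ref{th:field} is then correctly applied ambiently to $(A,G_x)$. Just note that the infinitude of $G_x$-orbits on nontrivial elements of $A$ should itself be transferred from the $K^{\times}$-picture or, more simply, a nontrivial finite $G_x$-invariant $B\leq A$ yields the definable subgroup $G_x< BG_x< G$, contradicting definable primitivity.
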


\begin{proof}Recall first that by Proposition~\ref{propo:socle}, since $(G,X)$ is of affine type, $G=A\rtimes G_x$ with $SU(A)=SU(X)$ and $A=C_G(A)$. Moreover, again since $(G,X)$ is of affine type, there is a pseudo-finite field $K$ so that $A\rtimes G_x \equiv V \rtimes H$, where $V$ is a $d$-dimensional vector-space over $K$ and $V\rtimes H$ is a subgroup of ${\rm AGL}_d(K)=V \rtimes {\rm GL}_d(K)$. Given any subset $V'\subseteq  V $ we have that $C_{V \rtimes H}(V')=V \rtimes C_H(V')$. Moreover, similarly as one shows that the centraliser dimension ${\rm cd}({\rm GL}_d(K))$ of ${\rm GL}_d(K)$ is at most $ d^2+1$ (see \cite[Proposition 2.1]{Myasnikov2004} and its proof) one can also show that the length of chains of centralisers of the form $C_{{\rm GL}_d(K)}(Y)$ where $Y \subseteq V$ is at most $d^2+1 \leqslant r^2+1$. So there are $r^2+1$ many elements $v_i\in V$ so that $C_{H}(V)=C_{H}(v_1, \ldots, v_{r^2+1})$. By \L os's theorem the same holds for $A$ and $G_x$. That is, $G_x$ acts on $A$ by conjugation, there are $r^2+1$ many elements $a_i\in A$ so that $C_{G_x}(A)=C_{G_x}(a_1, \ldots, a_{r^2+1})$, and, since $C_{G_x}(A)=1$, we have that the map $$G_x \rightarrow A^{r^2+1}; \,\, g_x \mapsto g_x(a_1)g_x(a_2)\cdots g_x(a_{r^2+1})$$ is a definable injection. So $G_x \subseteq A^{r^2+1}$ definably. We get that $SU(G_x)\leqslant (r^2+1)r$ and hence $SU(G)=SU(A)+SU(G_x) \leqslant r + (r^2+1)r$.


If $r=1$, then $H \leqslant {\rm GL}_1(K) \cong K^\times$ is abelian, and therefore $G_x$ is abelian too. Then, by Proposition~\ref{propo:socle} and Theorem~\ref{th:field}, there is a pseudo-finite field $F$ so that $A \cong F^\times$ and $G_x \leqslant F^\times$. So $SU(G_x) \leqslant SU(A)=1$, and since $G_x$ is infinite (as the affine type group $(G,X)$ is assumed to be primitive), we have $SU(G_x) = SU(A)=1$, $G_x$ is of finite index in $F^\times$, and $SU(G) = 2$.\end{proof}



\subsection{Almost simple groups}

In this section (and in Section~\ref{sec:trank1}) we consider Chevalley groups and twisted Chevalley groups over (pseudo-)finite fields. That is, groups of the form $G=X(F)$, where $$X\in \{A_n (n \geqslant 1), B_n (n \geqslant 2), C_n (n \geqslant 3), D_n (n \geqslant 4), E_6, E_7, E_8, F_4, G_2\}$$  or $$X\in \{^2A_n (n \geqslant 2), ^2D_n (n\geqslant 4),^2B_2, ^3D_4, ^2E_6, ^2F_4, ^2G_2\}.$$  We call $X$ the type of $G$ (it specifies both the Lie type and the Lie rank of $G$). A (twisted) Chevalley group is classical if $X\in \{A_n, B_n, C_n, D_n, ^2A_n, ^2D_n\}$. Standard references for (twisted) Chevalley groups are \cite{Carter1971, Steinberg1967} and some of the definability issues concerning Chevalley groups are considered in \cite{segal2023defining}. When we discuss automorphisms of (twisted) Chevalley groups, we follow the standard terminology of Steinberg \cite{Steinberg1967}; in particular, we do \emph{not} require graph automorphisms to be algebraic. 

\begin{defn}
    Suppose $(G,X)$ is a finite primitive permutation group with $G$ almost simple.  Suppose $S$ is the socle of $G$ and $H$ is a point stabiliser.  We say that $(G,X)$ is \emph{standard} if one of the following holds:
    \begin{enumerate}
        \item $S = {\rm Alt}_{n}$ and $X$ is an orbit of subsets or partitions of $\{1, \ldots, n\}$, 
        \item $S$ is a classical group with natural module $V$ and $X$ is an orbit of subspaces, or pairs of subspaces, or $S = \mathrm{Sp}_{n}(q)$, $q$ is even, and $H \cap S = O^{\pm}_{n}(q)$. 
    \end{enumerate}
    Otherwise, we say that $(G,X)$ is \emph{non-standard}.  We will refer to a pseudo-finite primitive permutation group as standard or non-standard if it is elementarily equivalent to an ultraproduct of standard or non-standard permutation groups respectively.
\end{defn}

Recall at this point (Remark~\ref{rem:alternating}) that when working in the finite $SU$-rank context, nonprincipal ultraproducts of alternating groups do not appear definably. Below, this allows us to reduce our treatment of the almost simple case to the case of standard actions where the $d$-socle is a classical (twisted) Chevalley group.

\begin{thm}[Burness, Liebeck and Shalev {\cite[Corollary 1]{Burnessetal2009}}] \label{fact: small base}Let $(G,X)$ be a finite almost simple group in a primitive faithful non-standard action. Assume that $G$ is not the Mathieu group $M_{24}$. Then there is a base of size $6$, that is, there is a subset $B \subseteq X$ with $|B| = 6$ such that $\bigcap_{b \in B} G_{b} = 1$. 

\end{thm}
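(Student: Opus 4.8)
The plan is to use the standard probabilistic approach to base sizes via fixed point ratios, as introduced by Liebeck and Shalev. For $1 \neq x \in G$ write $\mathrm{fpr}(x) = |\mathrm{fix}(x)|/|X|$ for its fixed point ratio, where $\mathrm{fix}(x) = \{y \in X : y^{x} = y\}$. A $c$-tuple $(y_{1},\dots,y_{c}) \in X^{c}$ fails to be a base precisely when some nontrivial $x \in G$ fixes every $y_{i}$, and the proportion of $c$-tuples fixed by a given $x$ is exactly $\mathrm{fpr}(x)^{c}$. Since every nontrivial subgroup of $G$ contains an element of prime order, the union bound shows that the proportion of $c$-tuples that are not bases is at most
$$
Q(G,c) \;=\; \sum_{x} \mathrm{fpr}(x)^{c},
$$
where the sum runs over all elements $x \in G$ of prime order. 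If $Q(G,6) < 1$ then a positive proportion of $6$-tuples are bases, and in particular a base of size $6$ exists. The whole problem thus reduces to the single inequality $Q(G,6) < 1$.

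First I would assemble the fixed point ratio estimates that make the non-standardness hypothesis bite. Grouping the sum by conjugacy classes gives $Q(G,6) = \sum_{i} |x_{i}^{G}|\,\mathrm{fpr}(x_{i})^{6}$, so it suffices to bound each $\mathrm{fpr}(x)$ by a sufficiently small negative power of the class size $|x^{G}|$. The essential input is the family of bounds of Liebeck--Shalev (refined by Guralnick--Magaard and Burness) asserting that, for $G$ of classical type in a \emph{non-subspace} action, or for $G$ of exceptional type, one has $\mathrm{fpr}(x) \leq |x^{G}|^{-1/2 + \varepsilon}$ with $\varepsilon \to 0$ as the rank and field size grow. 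It is exactly here that non-standardness is used: in the excluded standard (subspace) actions, elements such as transvections and reflections fix a large proportion of $X$, and no such bound holds. Feeding this into the class sum yields $|x^{G}|\,\mathrm{fpr}(x)^{6} \leq |x^{G}|^{-2 + 6\varepsilon}$, and since the number of conjugacy classes grows only polynomially in the rank while the minimal nontrivial class sizes grow, $Q(G,6) \to 0$ for all sufficiently large groups.

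The remaining work is a case analysis organised by the isomorphism type of the socle $S$ of $G$, as furnished by CFSG. By Remark~\ref{rem:alternating} and the reduction to standard actions, the case relevant to this paper is the classical one, but a self-contained proof must also treat alternating, exceptional, and sporadic socles. For classical groups one combines the Liebeck--Shalev bound with Aschbacher's description of the maximal subgroups to enumerate the non-subspace point stabilisers together with the conjugacy class data; for exceptional groups one uses the uniform fixed point ratio bound with the classification of maximal subgroups; the alternating case is settled by a direct combinatorial count of fixed points of permutations; and the finitely many sporadic groups are handled by explicit computation. The genuine exception $M_{24}$, which requires a base of size $7$, surfaces precisely in this last, computational, stratum.

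The hard part, and the reason this is quoted rather than reproved, will be the fixed point ratio estimates for elements of small support together with the boundary cases of small rank and small field. For those groups $Q(G,6)$ need not be less than $1$, and one must sharpen the generic estimate---passing to precise, class-by-class bounds on $|\mathrm{fix}(x)|$ and carefully enumerating the relevant conjugacy classes---or else dispatch the group by direct computation. The delicacy of where exactly $Q(G,6)$ crosses $1$ is what pins down both the clean bound $6$ and its single exception, and this is inseparable from the full strength of CFSG, the maximal subgroup classifications, and the detailed representation theory of the groups of Lie type.
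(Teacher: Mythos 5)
The paper gives no proof of this statement at all---it is imported wholesale from Burness--Liebeck--Shalev \cite{Burnessetal2009} as a black box (the only internal use being the transfer to ultraproducts via {\L}o\'s in Lemma \ref{lem: nonstandard bound}), so there is no in-paper argument to compare against. Your outline is a faithful reconstruction of the strategy of the cited source: the probabilistic reduction to $Q(G,6)<1$ via the union bound over prime-order elements, the Liebeck--Shalev/Burness fixed-point-ratio bounds $\mathrm{fpr}(x) \leq |x^{G}|^{-1/2+\varepsilon}$ for non-subspace actions (which is exactly where non-standardness is used), and the socle-by-socle case analysis with $M_{24}$ surfacing in the sporadic computations---and you correctly acknowledge that the substantive estimates are quoted rather than reproved, which is precisely how the paper itself treats the result.
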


\begin{lem} \label{lem: nonstandard bound}Suppose $(G,X)$ is a pseudo-finite primitive permutation group of almost simple type of finite SU-rank. If $(G,X)$ is non-standard, then $SU(G) \leq 6SU(X)$. 
\end{lem}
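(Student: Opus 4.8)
The plan is to exploit the base-of-size-$6$ result (Theorem~\ref{fact: small base}) to produce a definable injection of $G$ into $X^{6}$, from which the rank bound follows immediately. Write $(G,X) = \prod_{i \in I}(G_{i}, X_{i})/\mathcal{U}$ where, by the definition of non-standard in the pseudo-finite setting, $\mathcal{U}$-many $(G_{i}, X_{i})$ are finite almost simple primitive permutation groups in a non-standard faithful action. Since $X$ is infinite, the degrees $|X_{i}|$ are unbounded along $\mathcal{U}$; in particular, $\mathcal{U}$-many $X_{i}$ are too large to be an action of the single finite group $M_{24}$, so we may assume $G_{i} \neq M_{24}$ for all $i$ in a $\mathcal{U}$-large set and thereby discard the exceptional case in Theorem~\ref{fact: small base}.

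First I would apply Theorem~\ref{fact: small base} to each such $(G_{i}, X_{i})$ to obtain a base $B_{i} = \{b_{i,1}, \ldots, b_{i,6}\} \subseteq X_{i}$ of size $6$, that is, with $\bigcap_{k=1}^{6} (G_{i})_{b_{i,k}} = 1$. As this is a first-order property with a fixed bound on the base size, \L o\'s' Theorem yields elements $b_{1}, \ldots, b_{6} \in X$ with $\bigcap_{k=1}^{6} G_{b_{k}} = 1$.

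Next, I would consider the orbit map $\mu : G \to X^{6}$ given by $\mu(g) = (b_{1}^{g}, \ldots, b_{6}^{g})$, which is clearly definable. If $\mu(g) = \mu(h)$, then $gh^{-1}$ fixes each $b_{k}$, so $gh^{-1} \in \bigcap_{k} G_{b_{k}} = 1$ and hence $g = h$; thus $\mu$ is a definable injection, and $\mu(G)$ is a definable subset of $X^{6}$ in definable bijection with $G$. Consequently $SU(G) = SU(\mu(G)) \leq SU(X^{6})$.

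Finally, the Lascar inequalities for $SU$-rank, together with the finiteness of the rank (so that the natural sum agrees with ordinary addition), give $SU(X^{6}) \leq 6\,SU(X)$, whence $SU(G) \leq 6\,SU(X)$, as required. The only genuinely delicate points are the reduction away from the $M_{24}$ exception and the transfer of non-standardness to $\mathcal{U}$-many components; both are handled by the unboundedness of the degrees and the definition of non-standard in the pseudo-finite setting, while everything else is a direct consequence of Theorem~\ref{fact: small base} and the basic rank calculus.
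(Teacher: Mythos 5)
Your proposal is correct and follows essentially the same route as the paper: transfer the Burness--Liebeck--Shalev base of size $6$ (Theorem~\ref{fact: small base}) via \L o\'s' Theorem, then use the definable injection $g \mapsto (b_{1}^{g}, \ldots, b_{6}^{g})$ into $X^{6}$ and $SU(X^{6}) = 6\,SU(X)$. The only difference is that you explicitly dispose of the $M_{24}$ exception and the transfer of non-standardness to $\mathcal{U}$-many factors, details the paper leaves implicit; both are handled correctly.
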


\begin{proof}
    As the infinite group $(G,X)$ is elementarily equivalent to an infinite ultraproduct of finite permutation groups of non-standard almost simple type, Theorem \ref{fact: small base} and {\L}os's Theorem entails that $(G,X)$ also has a base of size $\leq 6$. Let $x_{1}, \ldots, x_{6} \in X$ be a base.  Then the map $G \to X^{6}$ defined by $g \mapsto (g \cdot x_{1}, \ldots, g \cdot x_{6})$ is injective, so we obtain 
    $$
    SU(G) \leq SU(X^{6}) = 6 \cdot SU(X), 
    $$
    as desired.
\end{proof}

\begin{lem} \label{lem: red to simple}
    Suppose $(G,X)$ is a pseudo-finite primitive permutation group of finite SU-rank of almost simple type.  Then $\mathrm{Soc}_{d}(G)$ has finite index in $G$ and acts transitively on $X$.
\end{lem}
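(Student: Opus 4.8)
The plan is to treat the two assertions separately, both relying on the identification supplied by Theorem~\ref{thm:LMT}(ii)(a): in the almost simple case $\mathrm{Soc}_{d}(G)$ is a single non-abelian simple pseudo-finite group $T$, it is a definable normal subgroup of $G$, and $T \leq G \leq \mathrm{Aut}(T)$. By Theorem~\ref{th:wilson} we may write $T \cong X(\mathbb{F})$ for a (twisted) Chevalley group $X$ over a pseudo-finite field $\mathbb{F}$, with the Lie type $X$ fixed (finite $SU$-rank forces bounded Lie rank).

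Transitivity is the easy half. Since $T = \mathrm{Soc}_{d}(G)$ is a nontrivial normal subgroup of $G$ and $G$ acts faithfully, $T$ does not act trivially on $X$. As $(G,X)$ is primitive, the orbits of the normal subgroup $T$ form a system of blocks, which must be trivial; since $T \neq 1$ acts nontrivially these blocks cannot all be singletons, so $T$ is transitive. (Alternatively, this is immediate from definable primitivity, $T$ being a nontrivial definable normal subgroup, exactly as in the argument of Proposition~\ref{propo:socle}.)

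For the finite-index claim I would pass to the embedding $G/T \hookrightarrow \mathrm{Out}(T)$ and bound each factor in the standard decomposition of the outer automorphism group of $T$ into diagonal, graph, and field automorphisms (Steinberg's description of $\mathrm{Aut}(T)$ applies, as $\mathbb{F}$ is perfect). The graph automorphisms form a finite group, bounded by the symmetries of the Dynkin diagram. The diagonal automorphisms modulo inner contribute a quotient of $\mathbb{F}^{\times}$ by $n$-th powers for some $n$ depending only on the fixed Lie type; since $\mathbb{F}^{\times}/(\mathbb{F}^{\times})^{n}$ is finite for every pseudo-finite field, this contribution is also finite.

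The main obstacle is the field automorphisms, which a priori could be infinite. Here I would invoke Lemma~\ref{lem: field aut}: the subgroup $G \cap \mathrm{Aut}(\mathbb{F})$ of field automorphisms is definable and the permutation group $(G \cap \mathrm{Aut}(\mathbb{F}), \mathbb{F})$ is interpretable in $G$, hence of finite $SU$-rank and in particular supersimple. If this subgroup were infinite, then $(\mathbb{F}, G \cap \mathrm{Aut}(\mathbb{F}))$ would be a pseudo-finite field equipped with an infinite set of automorphisms, contradicting Fact~\ref{fact:field-autom}. Therefore $G \cap \mathrm{Aut}(\mathbb{F})$ is finite, so the field-automorphism contribution to $G/T$ is finite as well. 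Combining the three finite contributions shows that $G/T \hookrightarrow \mathrm{Out}(T)$ is finite, i.e.\ $\mathrm{Soc}_{d}(G) = T$ has finite index in $G$, completing the proof.
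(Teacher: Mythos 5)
Your proof is correct and follows essentially the same route as the paper's: Steinberg's factorisation of automorphisms of the simple socle into inner, graph, diagonal, and field automorphisms, with the graph and diagonal contributions finite, the field automorphisms ruled out by combining Lemma~\ref{lem: field aut} with Fact~\ref{fact:field-autom} under supersimplicity, and transitivity of $\mathrm{Soc}_d(G)$ deduced from primitivity together with normality. Your added detail on the diagonal automorphisms (finiteness of $\mathbb{F}^{\times}/(\mathbb{F}^{\times})^{n}$ for a pseudo-finite field) is a harmless elaboration of what the paper simply asserts.
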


\begin{proof}
    By Theorem \ref{th:wilson} (and Lemma~\ref{lem:socle}), $S = \mathrm{Soc}_{d}(G)$ is a (twisted) Chevalley group over a pseudo-finite field $F$.  By \cite[Theorem 30, Theorem 36]{Steinberg1967}, any automorphism of $S$ can be factored as a product of an inner automorphism, a graph automorphism, a diagonal automorphism, and a field automorphism.  There are at most finitely many graph and diagonal automorphisms and, by Lemma \ref{lem: field aut} and Fact \ref{fact:field-autom}, there can be only finitely many field automorphisms that appear in the factorisation of the elements of $G$, by our assumption that $G$ has finite $SU$-rank.  This shows $G/S$ is finite and, since $S$ is normal and $(G,X)$ is primitive, we must have that $(S,X)$ is transitive.
\end{proof}

Lemma \ref{lem: nonstandard bound} shows that our task of finding a bound for $SU(G)$ in terms of $SU(X)$ is now reduced, in the almost simple case, to the case of standard actions.  Then Lemma \ref{lem: red to simple} shows that this further reduces to standard actions of a \emph{simple} classical group $G$ acting transitively on $X$.  We will handle this case by a further case division between the cases of $\mathrm{Soc}_{d}(G)$ being a Chevalley group and a twisted Chevalley group.

\subsubsection{Chevalley groups}

The aim of this subsection is to consider permutation groups of the form $(G,G/P)$ where $G$ is a simple Chevalley group of classical type and $P \leq G$ is a parabolic subgroup.  

We will need some facts about Weyl groups.  If $T \leq G$ is a maximal torus, the Weyl group of $G$ is the group $W = N_G(T)/T$ and it can be presented as a group generated by reflections $s_{\alpha}$ for $\alpha \in \Pi$, where $\Pi$ denotes a set of simple roots in the root system $\Phi$ of $G$. Given $I \subseteq \Pi$, we can define the parabolic subgroup $W_{I} = \langle s_{\alpha} : \alpha \in I\rangle$ of the Weyl group $W$ generated the reflections associated to the roots in $I$. We set $W^{I} = \{w \in W : \ell(ws_{\alpha}) > \ell(w) \text{ for all }\alpha \in I\}$, where for any $w\in W$, $\ell(w)$ denotes the minimal length of expressions of $w$ as a product of fundamental reflections.  Then the multiplication map $W^{I} \times W_{I} \to W$ is a bijection and each element of $W^{I}$ is the minimal length representative of its $W_{I}$ coset in $W$ \cite[Proposition 2.4.4]{bjorner2005combinatorics}.  

We will begin by proving a lemma relating dimension and $SU$-rank for certain subgroups of $G$ and for the partial flag variety $G/P$ for $P \leq G$ parabolic. If $G$ is a simple Chevalley group over a pseudo-finite field $F$, then $F$ and $G$ are bi-interpretable by the second part of Theorem \ref{th:wilson}.  Thus it makes sense, in the situation where $G$ is considered as a structure in the language of groups (or any larger language) to talk about the $SU$-rank of $F$.  In what follows, when we refer to the SU-rank of $F$, we are identifying $F$ with the field obtained via the interpretation defined by Ryten in \cite{Ryten2007}. Using the field to find coordinates for $G$ and $G/P$, we will find expressions for the $SU$-rank of $G$ and of $G/P$ for $P \leq G$ parabolic in terms of the $SU$-rank of $F$ and the algebraic dimension of $G$ and $G/P$. This allows us to use the field to interpolate between the $SU$-rank of the set $G/P$ on which $G$ acts, and $G$ itself, in order to get the desired bounds.

In this section (and in Section~\ref{sec:trank1}), if $Y$ is an algebraic variety, we write $\mathrm{dim}(Y)$ for the dimension of $Y$ as an algebraic variety.  

\begin{lem} \label{lem: dimension inequality}
Suppose $G$ is a simple Chevalley group over a pseudo-finite field $F$ of finite SU-rank. Then we have the following:
\begin{enumerate}
    \item There is a $\mathcal{L}_{gp}$-definable maximal torus $T \leq G$ with $SU(T) = r \cdot SU(F)$, where $r = \mathrm{dim}(T)$ is the Lie rank of $G$.
    \item $SU(G) = \mathrm{dim}(G) \cdot SU(F)$.
    \item If $P \leq G$ is a parabolic subgroup, then $P$ is $\mathcal{L}_{gp}$-definable and $SU(P) = \mathrm{dim}(P) \cdot SU(F)$. 
\end{enumerate}  
\end{lem}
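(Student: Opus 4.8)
\emph{Strategy.} The plan is to give, for each of $T$, $G$, and $P$, an explicit decomposition into definable pieces that are definably bijective to products of copies of $F^+$ and $F^\times$, and then to read off the $SU$-rank using additivity. Three facts will be used throughout. First, the requisite subgroups are $\mathcal{L}_{gp}$-definable: by Ryten's interpretation (Theorem~\ref{th:wilson}) together with the definable isomorphism in (the proof of) Lemma~\ref{lem: field aut} carrying $G$ onto a matrix group over $F$, every $F$-definable algebraic subgroup of that matrix realisation pulls back to an $\mathcal{L}_{gp}$-definable subgroup of $G$; this applies to the maximal torus, each root subgroup $U_\alpha$, the Borel $B = TU$, and the standard parabolics. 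Second, two rank identities: since $F^\times = F \setminus \{0\}$ differs from $F$ by one point, the union property gives $SU(F^\times) = SU(F)$, and since each root subgroup is definably isomorphic to $F^+$ we have $SU(U_\alpha) = SU(F)$. Third, additivity: as we are in finite $SU$-rank the natural sum coincides with ordinary addition, so $SU(A \times B) = SU(A) + SU(B)$ for definable sets $A,B$, and for definable subgroups $K \leq H$ one has $SU(H) = SU(K) + SU(H/K)$ by Fact~\ref{fact:Lascar-eq}.

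For (1), I would take $T$ to be the preimage of the diagonal torus of the matrix realisation; it is $\mathcal{L}_{gp}$-definable as above. Since $G$ is split, the parametrisation $(t_1,\dots,t_r)\mapsto \prod_i h_{\alpha_i}(t_i)$ is a definable isomorphism $(F^\times)^r \to T$, so $SU(T) = r\cdot SU(F^\times) = r\cdot SU(F)$, with $\dim(T)=r$ the Lie rank. For (2), I would use the Bruhat decomposition $G = \bigsqcup_{w\in W} BwB$, a finite partition into definable cells. For each $w$ the standard map $U_w \times B \to BwB$, $(u,b)\mapsto u\dot w b$, where $U_w = \prod_{\alpha\in\Phi^+\cap w\Phi^-}U_\alpha$ has $\dim U_w = \ell(w)$, is a definable bijection; together with $B \cong U\times T$ this gives $SU(BwB) = \ell(w)\,SU(F) + |\Phi^+|\,SU(F) + r\,SU(F) = \dim(BwB)\cdot SU(F)$. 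By the union property $SU(G)=\max_w SU(BwB)$, attained at the longest element $w_0$, whose cell is the big cell of full dimension $\dim G$; hence $SU(G)=\dim(G)\cdot SU(F)$.

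For (3), any parabolic is conjugate to a standard $P_I$, and conjugation is a definable bijection preserving $\dim$ and $SU$-rank, so I may assume $P=P_I$. Then $P_I = \bigsqcup_{w\in W_I} BwB$ is definable as a finite union of Bruhat cells (equivalently, via the Levi decomposition $P_I = R_u(P_I)\rtimes L_I$ into definable factors). The cell computation of (2), now restricted to $w\in W_I$, gives $SU(BwB)=\dim(BwB)\cdot SU(F)$, and the union property yields $SU(P_I)=\dim(P_I)\cdot SU(F)$, the maximum being realised at the longest element of $W_I$.

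The step requiring the most care is the definability: the torus, root subgroups, and parabolics are not canonically recoverable from the abstract group, and it is precisely Ryten's interpretation of $F$ in $G$ that makes them $\mathcal{L}_{gp}$-definable, while one must also check that the classical cell parametrisations are honest definable bijections on $F$-points. Once this is in place, the remaining bookkeeping is routine, being governed entirely by finite-rank additivity and the identities $SU(F^\times)=SU(F)$ and $SU(U_\alpha)=SU(F)$.
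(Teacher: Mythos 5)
Your proof is correct and follows the paper's strategy in its essentials: both arguments obtain definability from Ryten's bi-interpretation (Theorem~\ref{th:wilson}), decompose the group via the Bruhat decomposition into cells definably parametrised by products of root subgroups and the Borel, and conclude by additivity of $SU$-rank in the finite-rank setting. Two points of divergence are worth recording. First, in (1) you call the parametrisation $(t_1,\dots,t_r)\mapsto \prod_i h_{\alpha_i}(t_i)$ a definable \emph{isomorphism} $(F^\times)^r\to T$; for the abstractly simple groups at hand (e.g.\ $\mathrm{PSL}_2$, where $h(t)=h(-t)$) it is only a surjection with finite kernel, which is how the paper states it. This is harmless — a definable finite-to-one surjection preserves $SU$-rank — but the word should be weakened. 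Second, for (3) you compute $SU(P_I)$ directly from the Bruhat decomposition $P_I=\bigsqcup_{w\in W_I}B\dot{w}B$ of the parabolic itself, which matches the literal statement of the lemma; the paper instead parametrises the Schubert cells $\Omega_w\subseteq G/P$ by products of root subgroups (via \cite{sweeting_decomposing}) and proves $SU(G/P)=\mathrm{dim}(G/P)\cdot SU(F)$, which is the form it actually invokes later (e.g.\ in Proposition~\ref{prop: Chevalley bound}). The two formulations are interchangeable via Fact~\ref{fact:Lascar-eq} together with part (2), since $\mathrm{dim}(G)=\mathrm{dim}(P)+\mathrm{dim}(G/P)$: your route is marginally more self-contained for the statement as written, while the paper's yields the quotient version needed downstream without a further appeal to Lascar's equality. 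With the isomorphism/finite-kernel correction, there is no gap.
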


\begin{proof}
(1) The construction of a maximal torus $T$ given in, e.g., \cite[Lemma 28]{Steinberg1967} describes $T$ as the image of the surjection $\mathbb{G}_{m}^{d} \to T$  with finite kernel defined by $(t_{1}, \ldots, t_{d}) \mapsto \prod_{i=1}^{d} h_{i}(t_{i})$ for algebraic morphisms $h_{i}$ defined over the prime subfield and $d$ equal to the dimension of the maximal torus. This shows $SU(T) = d \cdot SU(F)$.  Definability follows from \cite{Ryten2007}. 

(2) Consider $B$ a Borel subgroup of $G$ containing $T$. The Bruhat decomposition of $G$ expresses $G$ as a disjoint union of double cosets $B \dot{w}B$ where $\dot{w}$ is an element of $N_G(T)$ corresponding to the Weyl group element $w \in W = N_G(T)/T$ \cite[Section 8.3]{springer1998linear}.  If $\mathbf{s} = s_{1}s_{2}\ldots s_{h}$ is a reduced expression for $w$ with $s_{i} = s_{\alpha_{i}}$ for $\alpha_{i}$ simple reflections then the map $\phi(x_{1},\ldots, x_{h};b) = u_{\alpha_{1}}(x_{1})\dot{s_{1}} u_{\alpha_{2}}(x_{2})\dot{s_{2}} \ldots u_{\alpha_{h}}(x_{h})\dot{s}_{h} b$ defines a bijection $\mathbb{A}^{h} \times B \to B \dot{w} B$ \cite[8.3.6]{springer1998linear}.  Therefore, we have 
$$
SU(G) = \max_{w \in W} SU(B\dot{w}B) = \max_{w \in W} \left( SU(B) + \ell(w)SU(F) \right). 
$$
The Borel subgroup may be expressed as a semi-direct product $U \rtimes T$ where $U = \prod_{r \in \Phi^{+}} U_{r}$ \cite[Section 8.5]{Carter1971}, where each $U_{r}$ is a root subgroup, definably isomorphic to $(F,+)$. This shows that $SU(B) = \mathrm{dim}(B) \cdot SU(F)$. Since $\mathrm{dim}(G)=\mathrm{dim}(B)+ \ell(w_0)$, where $w_0$ is the longest word in $W$, we obtain $SU(G) = \mathrm{dim}(G) \cdot SU(F)$.

(3)  If $P \leq G$ is parabolic, then it contains a Borel subgroup $B$.  Then $P = \bigcup_{w \in W_{J}} B \dot{w} B$ for a parabolic subgroup $W_{J} \subseteq W$. Using the Bruhat decomposition as above, we denote by $\Omega_{w}$ the image of the double coset $B \dot{w} B$ in $G/P$ for each $w \in W^{J}$.  By \cite[Proposition 3.1.2(ii)]{sweeting_decomposing}, $\Omega_{w}$ is in (definable) bijection with a product $\prod_{r \in \Phi^{-} \cap w^{-1}(\Phi^{+})} U_{r}$ of root subgroups and the length of this product is equal to the length of $w$.  This length is maximised at the word $w_{0}^{J} \in W^{J}$, where $w_{0} \in W$ is the longest word. Each subgroup $U_{r}$ satisfies $SU(U_{r}) = SU(F)$ so we get 
$$
SU(G/P) = \ell(w^{J}_{0}) \cdot SU(F) = \mathrm{dim}(G/P) \cdot SU(F),
$$
as desired.
\end{proof}

The next fact seems to be folklore.  The characteristic zero case can be found in \cite[Remark 1(iv)]{flaschka1991torus}. The following argument was pointed out to us by Sam Evens:

\begin{fact} \label{fact: sam fact}
Suppose $G$ is a simple algebraic group with parabolic subgroup $P$ and maximal torus $T$. Then $\mathrm{dim}(G/P) \geq \mathrm{dim}(T)$.  
\end{fact}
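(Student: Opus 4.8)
The plan is to translate the inequality into a statement about the root system and then into a spanning statement in the reflection representation. Fix a maximal torus $T \le G$ and let $\Phi$ be the root system of $G$ with respect to $T$, with simple roots $\Pi$ and positive roots $\Phi^{+}$; write $V = \mathbb{R}\Phi$, so that $\mathrm{dim}(T) = |\Pi| = \mathrm{dim}(V) =: r$. In the relevant situation $G/P$ is the infinite set $X$, so $P \ne G$; after conjugating we may take $P = P_J$ to be the standard parabolic attached to a proper subset $J \subsetneq \Pi$, and we let $\Phi_J \subseteq \Phi$ be the subsystem generated by $J$. It is standard (and consistent with the proof of Lemma~\ref{lem: dimension inequality}, where $\mathrm{dim}(G/P) = \ell(w_0^J)$) that $\mathrm{dim}(G/P) = |\Phi^{+} \setminus \Phi_J|$. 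Hence it suffices to prove that the roots in $\Phi^{+}\setminus\Phi_J$ span $V$: a spanning subset of an $r$-dimensional space must have at least $r$ elements, giving $\mathrm{dim}(G/P) = |\Phi^{+}\setminus\Phi_J| \ge r = \mathrm{dim}(T)$.

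To prove the spanning claim, put $U = \mathrm{span}(\Phi^{+}\setminus\Phi_J)$ and show $\Pi \subseteq U$. Every simple root in $\Pi \setminus J$ already lies in $\Phi^{+}\setminus\Phi_J \subseteq U$, and this set is nonempty since $J \ne \Pi$. For a simple root $\alpha_j \in J$, I would use that $G$ is simple, so its Dynkin diagram is connected, to choose a path $\alpha_{i_0}, \alpha_{i_1}, \ldots, \alpha_{i_m} = \alpha_j$ of distinct simple roots with consecutive roots adjacent and $\alpha_{i_0} \in \Pi \setminus J$. The crucial root-theoretic input is that each partial sum $\beta_\ell = \alpha_{i_0} + \cdots + \alpha_{i_\ell}$ is a root: assuming $\beta_{\ell-1}$ is a root, one has $\langle \beta_{\ell-1}, \alpha_{i_\ell}^\vee\rangle \le \langle \alpha_{i_{\ell-1}}, \alpha_{i_\ell}^\vee\rangle < 0$ (all pairings of distinct simple roots being $\le 0$, and the last one strictly negative by adjacency), so $\beta_{\ell-1} + \alpha_{i_\ell}$ is again a root. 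Each $\beta_\ell$ is positive with coefficient $1$ on $\alpha_{i_0}\notin J$, hence $\beta_\ell \in \Phi^{+}\setminus\Phi_J \subseteq U$; telescoping gives $\alpha_{i_\ell} = \beta_\ell - \beta_{\ell-1} \in U$, and in particular $\alpha_j \in U$. Thus $\Pi \subseteq U$, so $U = V$ and the spanning claim follows.

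I expect the spanning step to be the only real obstacle, and within it the delicate point is checking uniformly across all Lie types — including the non-simply-laced ones — that consecutive partial sums along a Dynkin path are roots lying outside $\Phi_J$; the argument above avoids the precise Cartan integers and uses only the sign of the pairing, so it is type-independent. A more conceptual alternative, matching the tangent-space viewpoint, is to note that the $T$-weights on $T_{eP}(G/P) \cong \mathfrak{g}/\mathfrak{p} = \bigoplus_{\alpha\in\Phi^{+}\setminus\Phi_J}\mathfrak{g}_{-\alpha}$ are precisely the negatives of the roots in $\Phi^{+}\setminus\Phi_J$; were they not to span $V$, a positive-dimensional subtorus of $T$ would act trivially on the tangent space at the fixed point $eP$, hence (by linearising the torus action near the fixed point and using connectedness of $G/P$) trivially on all of $G/P$, and would therefore lie in the kernel $\bigcap_{g\in G} gPg^{-1}$ of the action, which is finite as $G$ is simple — a contradiction. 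Either route delivers $\mathrm{dim}(G/P) \ge \mathrm{dim}(T)$.
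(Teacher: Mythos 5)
Your main (combinatorial) route is correct and genuinely different from the paper's. The paper argues geometrically: it takes the big cell $B\dot{w_0}B$, whose image in $G/P$ is open dense and identified with a product of root subgroups, computes that the $T$-stabiliser of a ``general'' point (all coordinates nonzero) is $\bigcap_{\alpha \in I}\ker(\alpha)$, deduces from simplicity (faithfulness of the action on $G/P$) that this stabiliser is trivial, and so exhibits a $T$-orbit of dimension $\dim(T)$ inside $G/P$. You instead reduce to pure root combinatorics: $\dim(G/P_J) = |\Phi^{+}\setminus\Phi_J|$, and you show these roots span the rank-$r$ ambient space by walking along a path in the connected Dynkin diagram from $\Pi\setminus J$ and checking that the partial sums $\beta_\ell$ are roots outside $\Phi_J$ (your verification via $\langle \beta_{\ell-1},\alpha_{i_\ell}^{\vee}\rangle \leq \langle \alpha_{i_{\ell-1}},\alpha_{i_\ell}^{\vee}\rangle < 0$ and the root-string lemma is sound, and telescoping recovers every simple root). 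What this buys: your argument uses simplicity only through connectedness of the diagram, needs no algebraic-geometry input (open cells, orbit dimensions, faithfulness), and applies verbatim to simple algebraic groups in the conventional sense with finite centre --- the case the paper's remark says requires ``minor modifications.'' What the paper's route buys is a slightly stronger geometric statement (an explicit full-dimensional torus orbit) with almost no case analysis in the root system. Your alternative tangent-space route is essentially the paper's argument relocated from a general point of the big cell to the $T$-fixed point $eP$; note only that in positive characteristic the step ``trivial on $T_{eP}(G/P)$ implies trivial on $G/P$'' should be justified via smoothness of torus fixed-point schemes ($T_x(X^{S}) = (T_xX)^{S}$) rather than naive linearisation. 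Finally, you are right to restrict to $P \neq G$: the fact as literally stated fails for $P = G$, and both your proof and the paper's implicitly assume $P$ proper, which is the only case used.
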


\begin{proof}
    We can fix a Borel subgroup $B \leq G$ and we may assume $T \leq B \leq P$. The Bruhat decomposition expresses $G$ as a disjoint union $B\dot{w}B$ as $w$ ranges over the Weyl group of $G$.  The cell $B\dot{w_{0}}B$, where $w_{0}$ is the longest word in the Weyl group, is the `big cell', which has open dense image $\Omega_{w_{0}} \subseteq G/P$ and the image $\Omega_{w_{0}}$ may be identified with a product $\prod_{\alpha \in I} U_{\alpha}$ of root subgroups of $G$ for some set $I$ of roots \cite{sweeting_decomposing}. 
    
    Say a point $\bar{x}=(x_{\alpha})_{\alpha \in I} \in \prod_{\alpha \in I} U_{\alpha}$ is \emph{general} if $x_{\alpha} \neq u_{\alpha}(0)$ for all $\alpha \in I$, where $u_{\alpha} : \mathbb{G}_{a} \to G$ is the homomorphism which is an isomorphism onto its image $U_{\alpha}$.  Note that if $t \in T$ and $x_{\alpha} = u_{\alpha}(c)$, then $tx_{\alpha}t^{-1} = t u_{\alpha}(c) t^{-1} = u_{\alpha}(\alpha(t)c)$ so if $tx_{\alpha} t^{-1} = x_{\alpha}$ then $c = \alpha(t) c$ so we have $\alpha(t) = 1$, since $c \neq 0$ (by the choice of $\overline{x}$).  Let $S = T_{\overline{x}}$, the stabiliser of $\overline{x}$ under the action of $T$.  Then $S$ is equal to the closed subgroup $\bigcap_{\alpha \in I} \mathrm{ker}(\alpha)$ and therefore does not depend on the choice of a general point $\overline{x}$. It follows that $S$ acts trivially on the open set consisting of all general points, and therefore acts trivially on all of $G/P$. 

    By simplicity, $G$ acts faithfully on $G/P$ and hence $T$ acts on $G/P$ with trivial kernel. Thus $S=1$ and there is an orbit of $T$ with dimension equal to $\mathrm{dim}(T)$, so $\mathrm{dim}(G/P) \geq \mathrm{dim}(T)$, as desired. 
\end{proof}

\begin{fact} \label{fact: dim bound 2}
    Suppose $G$ is a simple algebraic group not of exceptional type with rank $r$ (i.e. the dimension of a maximal torus in $G$ is $r$).  Then $\mathrm{dim}(G) \leq 2r^{2} + r$.  
    
\end{fact}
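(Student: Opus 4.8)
The plan is to reduce the bound to a root-counting inequality. Since $G$ is a simple algebraic group of rank $r$, fixing a maximal torus $T$ of dimension $r$ yields the root space decomposition of the Lie algebra $\mathfrak{g} = \mathrm{Lie}(G)$ into the Cartan subalgebra $\mathfrak{t}$ together with the one-dimensional root spaces $\mathfrak{g}_{\alpha}$, one for each root. Hence $\mathrm{dim}(G) = \mathrm{dim}(\mathfrak{g}) = r + |\Phi|$, where $\Phi$ is the root system of $G$ with respect to $T$. It therefore suffices to show $|\Phi| \leq 2r^{2}$ for every non-exceptional type, since then $\mathrm{dim}(G) = r + |\Phi| \leq r + 2r^{2} = 2r^{2} + r$, as desired.

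Because $G$ is not of exceptional type, its Dynkin type is one of $A_{r}, B_{r}, C_{r}, D_{r}$. The remaining step is a direct computation of $|\Phi|$ in each case: one has $|\Phi| = r(r+1)$ for type $A_{r}$ (the roots $e_{i} - e_{j}$ with $i \neq j$), $|\Phi| = 2r^{2}$ for types $B_{r}$ and $C_{r}$ (the $2r(r-1)$ roots $\pm e_{i} \pm e_{j}$ together with the $2r$ short or long roots), and $|\Phi| = 2r(r-1)$ for type $D_{r}$. In each case the inequality $|\Phi| \leq 2r^{2}$ holds, with equality exactly for $B_{r}$ and $C_{r}$, which gives the bound. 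Equivalently, one may bypass the root count and simply compare the classical dimension formulas $\mathrm{dim} = r^{2} + 2r$ (type $A$), $\mathrm{dim} = 2r^{2} + r$ (types $B$ and $C$), and $\mathrm{dim} = 2r^{2} - r$ (type $D$) against $2r^{2} + r$; but the root-counting formulation makes the uniform bound most transparent and explains why $B_{r}$ and $C_{r}$ are the extremal cases.

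There is essentially no serious obstacle here, as the content is just the classification of root systems together with their cardinalities. The only points requiring minor care are, first, to record that the $r$ in the statement is genuinely the rank, i.e. the dimension of a maximal torus, which equals the number of simple roots and so determines the identification of the type as $A_{r}$, $B_{r}$, $C_{r}$, or $D_{r}$; and second, to note that the low-rank coincidences among types (such as $B_{2} = C_{2}$ and $A_{3} = D_{3}$) cause no difficulty, since any two descriptions of the same group assign it the same rank and the same root count, so the bound is insensitive to this ambiguity.
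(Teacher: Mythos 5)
Your proposal is correct and takes essentially the same route as the paper: both reduce the bound to the identity $\mathrm{dim}(G) = r + |\Phi| = 2|\Phi^{+}| + r$ and then verify from the classification of classical root systems that $|\Phi^{+}| \leq r^{2}$, with types $B_{r}$ and $C_{r}$ extremal. The only (cosmetic) difference is that you justify the dimension formula via the root space decomposition of the Lie algebra, whereas the paper reads it off from the Borel subgroup and the length of the longest Weyl word, recycling the Bruhat-decomposition computation from Lemma~\ref{lem: dimension inequality}.
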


\begin{proof}
    This is well-known, but part of the proof of Lemma \ref{lem: dimension inequality} was rehearsing the proof that the dimension of $G$ is equal to the sum of the Borel subgroup plus the length of the longest reduced word in the Weyl group.  The Borel subgroup is the semi-direct product of a unipotent group $U$, whose dimension is equal to the number of positive roots in the root system of $G$, and $T$, the maximal torus whose dimension is assumed to be $r$. The length of the longest reduced word in the Weyl group is also the number of positive roots, so $\mathrm{dim}(G) = 2|\Phi^{+}| + r$.  From the table in \cite[Section 12.2]{humphreys2012introduction}, one sees this is maximised when the number of positive roots is $r^{2}$ (which occurs in types $B_{r}$ and $C_{r}$).
\end{proof}

\begin{rem}
    A careful reader will note that we are assuming $G$ is abstractly simple, rather than adopting the usual meaning of simple algebraic group, which allows for a finite kernel.  This is the only situation we have to deal with, since the groups appearing in the $d$-socle are abstractly simple, but the above proof works (with minor modifications) to establish the bound even for simple algebraic groups (in the conventional meaning for an algebraic group). 
    
    The groups of type $E_{7}$, $E_{8}$, $F_{4}$, and $G_{2}$ have dimensions $133$, $248$, $52$, and $14$ respectively, which violate the above bound.  The dimension of $E_{6}$ just makes it.  As these are the only exceptional types, it is easy to adjust the bound to additionally give an upper bound on $SU(G/P)$ in terms of $SU(G)$ for groups $G$ of exceptional type (for our purposes such bounds are not needed since the socle of the standard permutation group $(G,X)$ is of classical type).
\end{rem}

\begin{prop} \label{prop: Chevalley bound}
    Suppose $G$ is a simple Chevalley group over a pseudo-finite field $F$.  If $G$ has finite SU-rank and $P \leq G$ is a parabolic subgroup with $SU(G/P) = n$, then $SU(G) \leq 2n^{2} + n$. 
\end{prop}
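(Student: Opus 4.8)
The plan is to translate everything through the interpretable field $F$ using Lemma~\ref{lem: dimension inequality}, thereby reducing the claim to an elementary inequality among the algebraic dimensions of $G$ and $G/P$. Write $q = SU(F)$, and note $q \geq 1$ since $F$ is infinite. Let $r = \dim(T)$ be the Lie rank of $G$ and set $m = \dim(G/P)$, where $\dim$ denotes algebraic dimension throughout. First I would record the two relevant translations: by Lemma~\ref{lem: dimension inequality}(2) we have $SU(G) = \dim(G)\cdot q$, and by the computation carried out in the proof of Lemma~\ref{lem: dimension inequality}(3) (equivalently, by applying Lascar equality, Fact~\ref{fact:Lascar-eq}, to the definable subgroup $P \leq G$ together with parts (2) and (3) and $\dim(G/P) = \dim(G) - \dim(P)$) we have $n = SU(G/P) = m\cdot q$. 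After this, the target bound $SU(G) \leq 2n^2 + n$ becomes a purely numerical statement about $\dim(G)$, $m$, and $q$.

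Next I would bound $\dim(G)$ in terms of $m$ alone. By Fact~\ref{fact: sam fact}, $m = \dim(G/P) \geq \dim(T) = r$. Since $G$ is classical (which is exactly the case supplied by the reduction to standard actions; the exceptional types are handled by the adjustment described in the Remark following Fact~\ref{fact: dim bound 2}), Fact~\ref{fact: dim bound 2} gives $\dim(G) \leq 2r^2 + r$. Because $x \mapsto 2x^2 + x$ is increasing on $[0,\infty)$ and $m \geq r$, this upgrades to $\dim(G) \leq 2m^2 + m$.

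Finally I would assemble the estimate. Multiplying the last inequality by $q$ and using $mq = n$,
$$
SU(G) = \dim(G)\cdot q \leq (2m^2 + m)\,q = 2m(mq) + mq = 2mn + n.
$$
Since $q \geq 1$ we have $m \leq mq = n$, hence $2mn \leq 2n^2$, and therefore $SU(G) \leq 2n^2 + n$, as claimed.

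The computation itself is short, so I do not anticipate a genuine obstacle; the only points that need care are (i) that the identity $SU(G/P) = \dim(G/P)\cdot SU(F)$ is legitimately extracted from the \emph{proof} of Lemma~\ref{lem: dimension inequality}, not merely from its statement, and (ii) that Fact~\ref{fact: dim bound 2} is actually in force — that is, that we are in the classical, non-exceptional case, which is precisely what the earlier reduction guarantees in the intended application.
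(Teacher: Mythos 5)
Your proof is correct and takes essentially the same route as the paper's: both combine Lemma~\ref{lem: dimension inequality} (with the identity $SU(G/P)=\mathrm{dim}(G/P)\cdot SU(F)$ coming from the proof of part (3)), Fact~\ref{fact: sam fact}, and Fact~\ref{fact: dim bound 2}, then conclude by the same arithmetic using $SU(F)\geq 1$; your phrasing via $m=\mathrm{dim}(G/P)$ and monotonicity of $x\mapsto 2x^2+x$ is just a cosmetic rearrangement of the paper's $r\leq n/SU(F)$ step. The two caveats you flag (extracting the $G/P$ identity from the proof rather than the statement of the lemma, and the restriction of Fact~\ref{fact: dim bound 2} to non-exceptional types) are exactly the points the paper itself relies on implicitly, so nothing further is needed.
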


\begin{proof}
    We know, by Lemma \ref{lem: dimension inequality}(3), that
    $$
    n = SU(G/P) = \mathrm{dim}(G/P) \cdot SU(F).
    $$
    Let $r$ denote the rank of $G$ (dimension of a maximal torus). Then by Fact \ref{fact: sam fact}, we have $r \leq  \frac{n}{SU(F)}$. Then, by Fact \ref{fact: dim bound 2}, we have 
    $$
    \mathrm{dim}(G) \leq 2\left( \frac{n}{SU(F)} \right)^{2}  + \frac{n}{SU(F)}. 
    $$
    Lemma \ref{lem: dimension inequality}(2) now gives us 
    $$
   SU(G) = \mathrm{dim}(G) \cdot SU(F) \leq \frac{2n^{2}}{SU(F)} + n \leq 2n^{2} + n,
    $$
    since $SU(F) \geq 1$. \end{proof}

\subsubsection{Twisted Chevalley groups}

In this subsection, we will consider permutation groups of the form $(G_{\sigma}, G_{\sigma}/P)$ where $G_{\sigma}$ is a pseudo-finite twisted simple Chevalley group that is also a classical group (e.g. a unitary group of type $^2A_n$ for $n \geqslant 2$, or an orthogonal group of type $^2 D_n$ for $n \geqslant 4$) and $P \leq G_{\sigma}$ is a parabolic subgroup. As the notation suggests, $G_{\sigma}$ is obtained from a Chevalley group $G$ by taking the subgroup fixed under an endomorphism $\sigma$.  Let $\theta$ denote the associated field automorphism.  The case that $G_{\sigma}$ is classical over a field $F$ entails that $\theta^{2} = 1$ and $F$ has a quadratic extension $K$ with $\mathrm{Gal}(K/F) = \langle \theta \rangle$ \cite[Section 14.5]{Carter1971}. Moreover, by the second part of Theorem~\ref{th:wilson} if $G_{\sigma}$ is twisted Chevalley group over a pseudo-finite field $F$, then $G_{\sigma}$ is bi-interpretable with the field $F$. Note that the quadratic extension $K/F$ is also a pseudo-finite field so we may regard $G_{\sigma}$ a subgroup of the pseudo-finite Chevalley group $G(K)$. 

We will write $\Phi$ for the root system of $G$ with a choice $\Pi$ of simple roots and $\Phi^{+}$ and $\Phi^{-}$ of positive and negative roots, respectively.  For each $\alpha \in \Phi$, we denote by $U_{\alpha}$ the associated root subgroup. For an element $w \in W$, let $U^{-}_{w} = \prod_{\alpha \in \Phi^{+} \cap w^{-1}(\Phi^{-})} U_{\alpha}$ and $V^{+}_{w} = \prod_{\alpha \in \Phi^{-} \cap w(\Phi^{+})} U_{\alpha}$. 

We may fix a $\sigma$-stable maximal torus $T \subseteq G(K)$ and a $\sigma$-stable Borel subgroup $B \subseteq G(K)$ so that, setting $N = N_{G(K)}(T)$, we have that their $\sigma$-fixed subgroups $B_{\sigma} \subseteq B$ and $N_{\sigma} \subseteq N$ form a $(B,N)$-pair for the group $G_{\sigma}$ \cite[Theorem 13.5.4]{Carter1971}. We have that $\sigma$ acts on the Weyl group $W = N/(B \cap N)$ and the  $\sigma$-fixed subgroup $W_{\sigma}$ is the Weyl group of $G_{\sigma}$.  We may write $W_{\sigma} = \langle s_{I} : I \in S \rangle$ where $S$ is the set of $\sigma$ orbits on the set of simple roots $\Pi$ and each $s_{I}$ is the longest reduced word in the parabolic subgroup $W_{I}$ of $W$, which is an involution \cite[Lemma 23.3]{Malle-Testerman}.  As above, for each $J \subseteq S$, we will write $(W_{\sigma})_{J}$ for the parabolic subgroup of $W_{\sigma}$ generated by the elements $s_{I}$ for $I \in J$. As in the previous section, given $J \subseteq S$, we set $(W_{\sigma})^{J} = \{w \in W_{\sigma} : \ell(ws_{I}) > \ell(w) \text{ for all }I \in J\}$. Then the multiplication map $W_{\sigma}^{J} \times (W_{\sigma})_{J} \to W_{\sigma}$ is a bijection and each element of $W_{\sigma}^{J}$ is the minimal length representative of its $(W_{\sigma})_{J}$ coset in $W_{\sigma}$ \cite[Proposition 2.4.4]{bjorner2005combinatorics}.

For each Weyl group element $w \in W$, choose some representative $\dot{w} \in G$; this can be done in such a way that $\dot{w} \in G_{\sigma}$ for all $w \in W_{\sigma}$. Let $w_{0}$ denote the longest word in $W$.  Then $w_{0} \in W_{\sigma}$ \cite[Lemma C.2]{Malle-Testerman}.

\begin{lem} \label{lem: parameterization}
    Fix $w \in W_{\sigma}$. There is a definable bijection $B_{\sigma} \times (V^{+}_{w})_{\sigma} \to B_{\sigma} \dot{w} B_{\sigma}$ given by $(b,v) \mapsto bv \dot{w}$. 
\end{lem}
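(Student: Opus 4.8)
The plan is to establish the parameterization first inside the ambient untwisted group $G(K)$, and then to descend to the $\sigma$-fixed subgroup $G_\sigma$ by exploiting the uniqueness of the Bruhat-type decomposition. That is, in $G(K)$ I would first prove that multiplication gives a bijection $B \times V^+_w \to B\dot{w}B$, $(b,v)\mapsto bv\dot{w}$ — the untwisted analogue of the statement — and then show that a $\sigma$-fixed element forces its two factors to be $\sigma$-fixed.

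Concretely, since $\dot{w}$ normalises $T$ one has $B\dot{w}B = B\dot{w}U$, where $U = \prod_{\alpha\in\Phi^+}U_\alpha$ is the unipotent radical of $B$. Factoring $U = U^+_w U^-_w$, where $U^+_w$ collects the root subgroups $U_\alpha$ for $\alpha\in\Phi^+$ with $w(\alpha)\in\Phi^+$ and $U^-_w = \prod_{\alpha\in\Phi^+\cap w^{-1}(\Phi^-)}U_\alpha$ collects those with $w(\alpha)\in\Phi^-$ (both index sets are closed, so these are subgroups), the inclusion $\dot{w}U^+_w\dot{w}^{-1}\subseteq U\subseteq B$ yields $B\dot{w}U = B\dot{w}U^-_w = BV^+_w\dot{w}$, using $\dot{w}U^-_w\dot{w}^{-1} = V^+_w$. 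For uniqueness, if $b_1v_1\dot{w} = b_2v_2\dot{w}$ with $b_i\in B$ and $v_i\in V^+_w$, then $b_2^{-1}b_1 = v_2v_1^{-1}$ lies in $B\cap U^- = 1$ (as $V^+_w\subseteq U^-$), forcing $b_1 = b_2$ and $v_1 = v_2$. This is all standard (see \cite{springer1998linear, Carter1971}), so I would state it with a citation rather than reprove it in detail.

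For the descent, injectivity of $(b,v)\mapsto bv\dot{w}$ on $B_\sigma \times (V^+_w)_\sigma$ is immediate, as this map is the restriction of the injective map on $B \times V^+_w$. For surjectivity I would take $g \in B_\sigma\dot{w}B_\sigma \subseteq B\dot{w}B$ and write $g = bv\dot{w}$ with the unique $b\in B$, $v\in V^+_w$ furnished above. Applying $\sigma$ gives $g = \sigma(g) = \sigma(b)\sigma(v)\sigma(\dot{w})$, where $\sigma(\dot{w}) = \dot{w}$ by the choice of representatives, $\sigma(b)\in B$ as $B$ is $\sigma$-stable, and — crucially — $\sigma(v)\in V^+_w$. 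This last point is exactly where $w\in W_\sigma$ enters: writing $\rho$ for the permutation of $\Phi$ induced by $\sigma$ (so $\sigma(U_\alpha)=U_{\rho(\alpha)}$ and $\rho(\Phi^+)=\Phi^+$, since $B$ is $\sigma$-stable), the hypothesis $w\in W_\sigma$ gives $\rho w = w\rho$, whence $\rho(\Phi^-\cap w(\Phi^+)) = \Phi^-\cap w(\Phi^+)$ and therefore $\sigma(V^+_w) = V^+_w$. By the uniqueness of the decomposition in $G(K)$ we conclude $\sigma(b) = b$ and $\sigma(v) = v$, i.e. $b\in B_\sigma$ and $v\in(V^+_w)_\sigma$, so $g$ lies in the image; a symmetric computation (conjugating $v\dot{w}$ into $\dot{w}(\dot{w}^{-1}v\dot{w})$ with the inner factor in $B_\sigma$) confirms the map lands in $B_\sigma\dot{w}B_\sigma$. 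Hence it is a bijection.

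It remains to note definability: $B_\sigma$ is a definable (Borel) subgroup of $G_\sigma$, each twisted root subgroup occurring in $(V^+_w)_\sigma$ is definable, and the multiplication map is visibly definable; since a definable bijection between definable sets has definable inverse, the claimed map is definable. The main obstacle is precisely the $\sigma$-stability of $V^+_w$ combined with using uniqueness to force the factors to be $\sigma$-fixed; everything else is routine once the untwisted parameterization is in hand. The one point requiring genuine care is the bookkeeping of the $\sigma$-action on $\Phi$ versus its action on $W$, to be sure that membership $w\in W_\sigma$ really does translate into $\sigma$-stability of $V^+_w$.
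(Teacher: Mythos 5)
Your proof is correct, but it takes a genuinely different route from the paper. The paper simply invokes the twisted Bruhat decomposition directly \cite[Proposition 13.5.3]{Carter1971}, which already gives the bijection $B_{\sigma} \times (U^{-}_{w})_{\sigma} \to B_{\sigma}\dot{w}B_{\sigma}$, $(b,u) \mapsto b\dot{w}u$, and then performs a one-line computation conjugating across $\dot{w}$: since $\dot{w} \in G_{\sigma}$, one has $B_{\sigma}\dot{w}(U^{-}_{w})_{\sigma} = B_{\sigma}\bigl(\dot{w}U^{-}_{w}\dot{w}^{-1}\bigr)_{\sigma}\dot{w} = B_{\sigma}(V^{+}_{w})_{\sigma}\dot{w}$, with uniqueness of factors transported along the conjugation. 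You instead start from the \emph{untwisted} parameterization $B \times V^{+}_{w} \to B\dot{w}B$ in $G(K)$ and descend to $G_{\sigma}$ by a Galois-descent argument: uniqueness of the untwisted factorization plus $\sigma$-stability of $B$, of $\dot{w}$ (by the choice of representatives for $w \in W_{\sigma}$), and of $V^{+}_{w}$ (your computation $\rho(\Phi^{-} \cap w(\Phi^{+})) = \Phi^{-} \cap w(\Phi^{+})$, using $\rho w = w\rho$ for $w \in W_{\sigma}$, is exactly right and is indeed the one point requiring care) forces the factors of a $\sigma$-fixed element to be $\sigma$-fixed; your symmetric computation $bv\dot{w} = b\dot{w}(\dot{w}^{-1}v\dot{w})$ with $\dot{w}^{-1}v\dot{w} \in (U^{-}_{w})_{\sigma} \subseteq B_{\sigma}$ correctly checks the map lands in $B_{\sigma}\dot{w}B_{\sigma}$, and your uniqueness step is sound since $\Phi^{-} \cap w(\Phi^{+})$ is closed, so $V^{+}_{w}$ is a subgroup of $U^{-}$ and $B \cap U^{-} = 1$ applies. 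In effect you reprove the relevant instance of Carter's twisted Bruhat decomposition (the descent-by-uniqueness mechanism is essentially how that result is proved for $\sigma$-fixed $(B,N)$-pairs), which makes your argument self-contained modulo the untwisted theory and makes explicit where $w \in W_{\sigma}$ enters, at the cost of redoing work the paper outsources to a citation; the paper's version is shorter and keeps all twisted-group bookkeeping inside the cited result.
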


\begin{proof}
    The usual Bruhat decomposition for twisted Chevalley groups \cite[Proposition 13.5.3]{Carter1971} establishes a bijection $B_{\sigma} \times (U^{-}_{w})_{\sigma} \to B_{\sigma} \dot{w} B_{\sigma}$ via the map $(b,u) \mapsto b \dot{w} u$.  Note that we have 
    \begin{eqnarray*}
    B_{\sigma} \dot{w} (U^{-}_{w})_{\sigma} &=& B_{\sigma} \dot{w} (U^{-}_{w})\dot{w}^{-1} \dot{w} \\
    &=& B_{\sigma} \left(\prod_{\alpha \in \Phi^{+} \cap w^{-1}(\Phi^{-})} U_{w(\alpha)} \right)_{\sigma} \dot{w} \\
    &=& B_{\sigma} \left( \prod_{\alpha \in \Phi^{-} \cap w(\Phi^{+})} U_{\alpha} \right)_{\sigma} \dot{w} \\
    &=& B_{\sigma} (V^{+}_{w})_{\sigma} \dot{w},
    \end{eqnarray*}
    and the result follows. 
\end{proof}

\begin{lem} \label{lem: root space iso}
    Suppose $J \subseteq S$ and $P_{J} = \bigsqcup_{w \in (W_{\sigma})_{J}} B_{\sigma} \dot{w} B_{\sigma}$ is the associated standard parabolic subgroup of $G_{\sigma}$. Let $\Omega^{J}_{w}$ be the image of the double coset $B_\sigma\dot{w}B_{\sigma} $ in $G_\sigma/P_{J}$ for $w\in W^{J}_{\sigma}$. Write $w_{0} = w^{J}w_{J}$ with $w^{J} \in W_{\sigma}^{J}$ and $w_{J} \in (W_{\sigma})_{J}$. Then 
    $$
    SU(G_{\sigma}/P_{J}) = SU(\Omega^{J}_{w^{J}}) = SU((V^{+}_{w^{J}})_{\sigma}).
    $$
\end{lem}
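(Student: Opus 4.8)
The plan is to combine the parabolic Bruhat decomposition of $G_\sigma/P_J$ with the max-over-union behaviour of $SU$-rank, reducing the computation of $SU(G_\sigma/P_J)$ to that of a single, largest Schubert cell. First I would record the decomposition $G_\sigma/P_J=\bigsqcup_{w\in W^J_\sigma}\Omega^J_w$: since $P_J=\bigsqcup_{w\in(W_\sigma)_J}B_\sigma\dot wB_\sigma$ and the multiplication map $W^J_\sigma\times(W_\sigma)_J\to W_\sigma$ is a bijection onto $W_\sigma$ with minimal-length representatives in $W^J_\sigma$, the images $\Omega^J_w$ of the double cosets $B_\sigma\dot wB_\sigma$ (for $w\in W^J_\sigma$) partition $G_\sigma/P_J$; this is the twisted analogue of the decomposition used in Lemma~\ref{lem: dimension inequality}(3). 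As this is a \emph{finite} disjoint union of definable sets, the union property of $SU$-rank gives $SU(G_\sigma/P_J)=\max_{w\in W^J_\sigma}SU(\Omega^J_w)$.

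\textbf{Identifying the cells.} Next I would identify each cell with a product of root subgroups. Starting from the Bruhat parametrisation of Lemma~\ref{lem: parameterization} and passing to $G_\sigma/P_J$, I would show that the quotient map restricts to a definable bijection $\left(\prod_{\alpha\in\Phi^+\cap w(\Phi^-)}U_\alpha\right)_\sigma\to\Omega^J_w$, $u\mapsto u\dot wP_J$. Well-definedness and surjectivity follow by absorbing the complementary part of $B_\sigma$ (the torus $T_\sigma$ together with the root subgroups for positive roots kept positive by $w^{-1}$) into $P_J$, using $B_\sigma\subseteq P_J$ and that conjugation by $\dot w^{-1}$ keeps these factors inside $P_J$. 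Injectivity is exactly where the minimality of $w\in W^J_\sigma$ is used: for a minimal-length coset representative the transverse factor is `opposite' to $P_J$, in the sense that $\dot w^{-1}\left(\prod_{\alpha\in\Phi^+\cap w(\Phi^-)}U_\alpha\right)\dot w$ is a product of negative root subgroups for roots lying outside the Levi subsystem $\Phi_J$, and hence meets $P_J$ trivially. Since the root set $\Phi^+\cap w(\Phi^-)$ is the negative of $\Phi^-\cap w(\Phi^+)$ and carries the same $\sigma$-orbit structure, the resulting subgroup has the same $SU$-rank as $(V^+_w)_\sigma$, yielding $SU(\Omega^J_w)=SU((V^+_w)_\sigma)$.

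\textbf{Locating the maximum.} Finally I would show that the maximum in $\max_{w\in W^J_\sigma}SU(\Omega^J_w)$ is attained at $w=w^J$. Writing $w_0=w^Jw_J$ with $w^J\in W^J_\sigma$, $w_J\in(W_\sigma)_J$ and $\ell(w_0)=\ell(w^J)+\ell(w_J)$ exhibits $w^J$ as the longest element of $W^J_\sigma$; correspondingly $\Omega^J_{w^J}$ is the \emph{big cell}, open and dense in $G_\sigma/P_J$, while every other cell $\Omega^J_w$ has strictly smaller dimension and therefore strictly smaller $SU$-rank. Hence $SU(\Omega^J_{w^J})=SU(G_\sigma/P_J)$, and combining with the previous step gives $SU(G_\sigma/P_J)=SU(\Omega^J_{w^J})=SU((V^+_{w^J})_\sigma)$, as required. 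Alternatively, and without appealing to density, one checks that $(V^+_w)_\sigma$ definably embeds into $(V^+_{w^J})_\sigma$ for every $w\in W^J_\sigma$ (root-set inclusion), so that the maximum is realised at $w^J$ by monotonicity of $SU$-rank under definable injections.

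\textbf{Main obstacle.} I expect the middle step to be the crux: making precise the definable bijection $\Omega^J_w\cong(V^+_w)_\sigma$. This requires carefully tracking, through the $\sigma$-fixed-point construction, how the point stabiliser $P_J$ interacts with the Bruhat parametrisation, and in particular verifying the transversality that encodes the minimality of $w$ in its $(W_\sigma)_J$-coset. The $\sigma$-orbit bookkeeping on roots, absent in the untwisted case of Lemma~\ref{lem: dimension inequality}(3), is the one genuinely new technical ingredient, since one must ensure that taking $\sigma$-fixed points commutes appropriately with the relevant products of root subgroups and with conjugation by $\dot w$.
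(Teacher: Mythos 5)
Your first two steps essentially reproduce the paper's argument: the decomposition $G_\sigma/P_J=\bigsqcup_{w\in W^J_\sigma}\Omega^J_w$ is obtained there from the $(B,N)$-pair multiplication rule, and each cell is identified with a product of $\sigma$-fixed root subgroups via Lemma~\ref{lem: parameterization} (the paper parametrises $\Omega^J_w$ by $(V^+_w)_\sigma$ acting on one side, you by the opposite group $\bigl(\prod_{\alpha\in\Phi^+\cap w(\Phi^-)}U_\alpha\bigr)_\sigma$ on the other; the two root sets differ by negation, which preserves $\sigma$-orbit types, so this variation is harmless).

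The final step, locating the maximum, is where your proposal has a genuine gap. Your ``safe'' alternative --- that $(V^+_w)_\sigma$ definably embeds into $(V^+_{w^J})_\sigma$ by inclusion of root sets --- is false. Writing $N(v)=\{\alpha\in\Phi^+ : v(\alpha)\in\Phi^-\}$, the root set of $V^+_w$ is $\Phi^-\cap w(\Phi^+)=-N(w^{-1})$, and the inversion sets of inverses of minimal coset representatives are not nested: in type $A_2$ with $J$ corresponding to $s_1$ one has $W^J=\{e,s_2,s_1s_2\}$ and $w^J=s_1s_2$, and for $w=s_2$ the root set of $V^+_w$ is $\{-\alpha_2\}$ while that of $V^+_{w^J}$ is $\{-\alpha_1,-\alpha_1-\alpha_2\}$, so there is no inclusion (the same phenomenon occurs inside twisted Weyl groups $W_\sigma$ of rank $\geq 2$, e.g.\ of type $B_2$ for ${}^2A_3$ or ${}^2A_4$). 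Your primary argument does not rescue this as stated: ``open, dense, strictly smaller dimension'' are notions from the algebraic-group picture over an algebraically closed field, and they do not by themselves yield $SU$-rank comparisons for the $\sigma$-fixed point sets in the pseudo-finite setting. The correct repair --- and what the paper does --- is to compute the rank of each cell directly: by Lemma~\ref{lem: twisted roots}, each $\sigma$-orbit $A$ inside the root set contributes $SU((U_A)_\sigma)=\tfrac{1}{2}SU(U_A)=|A|\cdot SU(F)$, so $SU(\Omega^J_w)=SU((V^+_w)_\sigma)=\ell(w)\,SU(F)$ where $\ell$ is the length in the untwisted Weyl group $W$; then, since lengths add for minimal coset representatives, $\ell(w)=\ell(ww_J)-\ell(w_J)\leq\ell(w_0)-\ell(w_J)=\ell(w^J)$ for every $w\in W^J_\sigma$, so the maximum is attained at $w^J$ --- equivalently, $\Omega^J_{w^J}$ is the image of the big double coset $B_\sigma\dot{w_0}B_\sigma$, which is how the paper phrases it. Your observation that $w^J$ is the longest element of $W^J_\sigma$ is correct, but it must be fed into this length-times-$SU(F)$ computation rather than into a density or cell-inclusion argument.
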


\begin{proof}
    First, using that, whenever $s \in S$ and $l(sw) > l(w)$, then $B_{\sigma}\dot{s}B_{\sigma} B_{\sigma}\dot{w}B_{\sigma} = B_{\sigma}\dot{s}\dot{w} B_{\sigma}$ (a property of $(B,N)$-pairs), we note that we can decompose $G_{\sigma}$ into a disjoint union as follows:
    $$
    G_{\sigma} = \bigsqcup_{w \in W_{\sigma}} B_{\sigma} \dot{w} B_{\sigma} = \bigsqcup_{w \in W_{\sigma}^{J}} \bigsqcup_{w' \in (W_{\sigma})_{J}} B_{\sigma} \dot{w} B_{\sigma} B_{\sigma} \dot{w}' B_{\sigma} = \bigsqcup_{w \in W^{J}_{\sigma}} B_{\sigma} \dot{w} B_{\sigma} P_{J}
    $$
    and therefore $G_{\sigma}/P_{J}$ is the disjoint union of the $\Omega^{J}_{w}$ as $w$ ranges over $W^{J}_{\sigma}$.  By Lemma \ref{lem: parameterization} and the fact that $P_{J} \supseteq B_{\sigma}$, we know that the map $v \mapsto P_{J} v \dot{w}$ is a surjection from $(V^{+}_{w})_{\sigma}$ to $G_{\sigma}/P_{J}$ for all $w \in W_{\sigma}^{J}$.  Moreover, it is injective since $P_{J}$ intersects the $U_{\alpha}$ for $\alpha \in \Phi^{-}\cap w^{-1}(\Phi^{+})$ trivially \cite[3.2.2(ii)]{sweeting_decomposing}. As the longest word moves all the positive roots to negative roots, the $(V^{+}_{w})_{\sigma}$ of maximal $SU$-rank corresponds to the image of the double coset $B_{\sigma} \dot{w_{0}} B_{\sigma}$ since $w_{0}$ is the longest word.  This gives 
    $$
    SU(G_{\sigma}/P_{J}) = SU(\Omega^{J}_{w^{J}}) = SU((V^{+}_{w^{J}})_{\sigma}).
    $$
\end{proof}

\begin{lem} \label{lem: twisted roots}
    Let $A$ denote a $\sigma$-orbit in $\Phi$ and write $U_{A}$ for $\prod_{\alpha \in A} U_{\alpha}$.  Then 
    $$
SU(U_{A}) = 2 SU((U_{A})_{\sigma}). 
    $$
\end{lem}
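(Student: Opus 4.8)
The plan is to reduce the identity to a computation of $F$-dimensions and then to verify it one orbit-type at a time. Since $G_{\sigma}$ is classical, the field automorphism $\theta$ satisfies $\theta^{2}=1$, so the permutation of $\Phi$ induced by $\sigma$ is an involution and every $\sigma$-orbit $A$ has $|A|\in\{1,2\}$. Each root subgroup $U_{\alpha}$ is definably isomorphic to $(K,+)$, so $SU(U_{\alpha})=SU(K)$; and as $K/F$ is a quadratic extension of pseudo-finite fields, a choice of $F$-basis gives a definable isomorphism $K\cong F^{2}$, whence $SU(K)=2\,SU(F)$ by additivity of the rank (Fact~\ref{fact:Lascar-eq}). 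Consequently $SU(U_{A})=|A|\cdot SU(K)=2|A|\cdot SU(F)$, and the lemma reduces to the claim that $(U_{A})_{\sigma}$ is, up to definable bijection, an $F$-vector space of dimension $|A|$, i.e.\ that $SU((U_{A})_{\sigma})=|A|\cdot SU(F)$.

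The factor of two is, conceptually, the statement that $(U_{A})_{\sigma}$ is the group of $F$-points of a Weil restriction along $K/F$, and the two abelian orbit-types make this transparent. If $|A|=1$, say $A=\{\alpha\}$ with $\theta(\alpha)=\alpha$, then $\sigma$ acts on $U_{\alpha}\cong(K,+)$ through the Galois automorphism of $K/F$ (possibly twisted by a sign coming from Steinberg's description of $\sigma$), so $(U_{A})_{\sigma}$ is an $F$-line inside $K$, of $F$-dimension $1=|A|$. If $|A|=2$, say $A=\{\alpha,\theta\alpha\}$ with $\alpha+\theta\alpha\notin\Phi$, then the Chevalley commutator relations give $[U_{\alpha},U_{\theta\alpha}]=1$, so $U_{A}=U_{\alpha}\times U_{\theta\alpha}\cong K^{2}$ and $\sigma$ swaps the two factors up to the Galois twist; the fixed set is the ``graph'' $\{u_{\alpha}(t)\,u_{\theta\alpha}(\overline{t}):t\in K\}\cong(K,+)$, of $F$-dimension $2=|A|$. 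In either case $SU((U_{A})_{\sigma})=|A|\cdot SU(F)$.

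I expect the main obstacle to be the single non-abelian configuration, which arises in type $^{2}A_{2m}$: here $A=\{\alpha,\theta\alpha\}$ but $\alpha+\theta\alpha\in\Phi$. Now the commutator relation $[u_{\alpha}(t),u_{\theta\alpha}(s)]=u_{\alpha+\theta\alpha}(\pm ts)$ shows that $U_{A}$ is not a subgroup and that $\sigma$ fails to stabilise it as a set (reordering after applying $\sigma$ produces a term in $U_{\alpha+\theta\alpha}$), so $(U_{A})_{\sigma}$ has to be read as the section attached to the orbit $A$ inside the closed set $A\cup\{\alpha+\theta\alpha\}$. To pin this down I would work inside the genuine group $U=U_{\alpha}U_{\theta\alpha}U_{\alpha+\theta\alpha}$, combine Steinberg's explicit formulas for $\sigma$ on root elements with the commutator relations to parametrise $U_{\sigma}$, and extract the short exact sequence with kernel $(U_{\alpha+\theta\alpha})_{\sigma}$ (the fixed points of the $\theta$-fixed root $\alpha+\theta\alpha$) and quotient the section attached to $A$; a direct computation --- the same one giving $|U_{\sigma}|=q^{3}$ with centre of order $q$ over a finite base field --- shows this section has $F$-dimension $2=|A|$. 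Alternatively one transfers the cardinality $q^{|A|}$ of the twisted root section from the finite groups via {\L}o\'s's Theorem and the bi-interpretation of $G_{\sigma}$ with $F$ from Theorem~\ref{th:wilson}. In all cases $SU((U_{A})_{\sigma})=|A|\cdot SU(F)=\frac{1}{2}SU(U_{A})$, which is the desired identity.
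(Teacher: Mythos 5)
Your proposal is essentially correct, and in the one genuinely delicate case it takes a different route from the paper. First, a definitional point: the paper reads ``$\sigma$-orbit'' in the sense of the root classes of \cite[Lemma 63]{Steinberg1967}, so in type $^{2}A_{2m}$ the problematic configuration is packaged as a single class $A=\{\alpha,\beta,\alpha+\beta\}$ of type $A_{2}$ with $|A|=3$; then $U_{A}=U_{\alpha}U_{\beta}U_{\alpha+\beta}$ is a genuine $\sigma$-stable subgroup and $(U_{A})_{\sigma}$ is the honest fixed-point group $\{x_{\alpha}(t)x_{\beta}(\theta(t))x_{\alpha+\beta}(u): t\theta(t)+u+\theta(u)=0\}$, so no reinterpretation of $(U_{A})_{\sigma}$ as a section is needed. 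Your diagnosis that a literal two-element orbit with $\alpha+\theta\alpha\in\Phi$ breaks $\sigma$-stability of $U_{A}$ (making the naive fixed-point set degenerate) is exactly right, and your repackaging inside the closed set $A\cup\{\alpha+\theta\alpha\}$ reassembles to the same counts, so the two readings are interchangeable where the lemma is applied (in Lemma~\ref{lem: lifting}). In the $A_{2}$ case itself the paper proceeds by brute force: it solves the condition $t\theta(t)+u+\theta(u)=0$ explicitly, writing $K=F(\gamma)$ with $\gamma^{2}\in F$ in odd characteristic and with an Artin--Schreier generator in characteristic $2$, thereby exhibiting a definable coordinatisation of $(U_{A})_{\sigma}$ by $F^{3}$. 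Your central extension $1\to (U_{\alpha+\beta})_{\sigma}\to U_{\sigma}\to (K,+)\to 1$ together with Lascar additivity (Fact~\ref{fact:Lascar-eq}) is cleaner and characteristic-free, provided you add that the kernel is exactly $\ker(\mathrm{Tr}_{K/F})$, which by surjectivity of the trace and Fact~\ref{fact:Lascar-eq} again has rank $SU(K)-SU(F)=SU(F)$; this buys a uniform argument where the paper splits into characteristics. One caution about your ``alternative'' route: transferring the cardinality $q^{|A|}$ from the finite groups via \L o\'s' Theorem only controls a counting dimension, and since $F$ is interpreted in an ambient structure of finite $SU$-rank one may well have $SU(F)>1$, in which case approximate cardinalities do not by themselves determine the ambient $SU$-rank of a definable set; this is precisely why the paper (and your primary argument) must produce definable bijections or fibrations over $F$ rather than count points.
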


\begin{proof}
    We apply \cite[Lemma 63]{Steinberg1967}, which describes the twisted root subgroups in the twisted Chevalley groups $G_{\sigma}$. Our assumption that $G_{\sigma}$ is classical entails that $\sigma$ is an involution and so there are only $3$ possibilities for $A$.  If $A$ is of type $A_{1}$, say $A = \{\alpha\}$, then $(U_{A})_{\sigma} = \{x_{\alpha}(t) : t \in F\}$ and $U_{A} = U_{\alpha} = \{x_{\alpha}(t) : t \in K\}$.  Then $SU(U_{A}) = SU(K) = 2SU(F) = 2SU((U_{A})_{\sigma})$, since $K/F$ is quadratic. 

    If $A$ is of type $A_{1} \times A_{1}$, $A = \{\alpha, \sigma(\alpha)\}$, say, then 
    $$
    (U_{A})_{\sigma} = \{x \cdot \sigma(x) : x = x_{\alpha}(t), \alpha \in A, t \in K\},
    $$
    and $U_{A} = U_{\alpha} \times U_{\sigma(\alpha)}$ so we obtain 
    $$
    SU(U_{A}) = 2 SU(K) = 2 SU((U_{A})_{\sigma}).
    $$

    If $A$ is of type $A_{2}$, then $A = \{\alpha, \beta, \alpha+\beta\}$, and we have 
    $$
    (U_{A})_{\sigma} = \{x_{\alpha}(t)x_{\beta}(\theta(t))x_{\alpha+\beta}(u) : t,u \in K, t\theta(t) + u + \theta(u) = 0\}.
    $$
    Then $SU((U_{A})_{\sigma}) = 3SU(F)$. To see this, note that if $\mathrm{char}(F) \neq 2$, then we can write $K = F(\gamma)$ for some element $\gamma \in K$ with $\gamma^{2} \in F$ and $\theta(\gamma) = - \gamma$. Then if $t,u \in K$, we may write $t = a + b\gamma$ and $u = c + d \gamma$ for $c,d \in F$. The equation $t\theta(t) + u + \theta(u) = 0$ yields the equivalent equation 
    $$
    (a+b\gamma)(a - b\gamma) + (c+d\gamma) + (c - d\gamma) = a^{2} - b^{2}\gamma^{2} + 2c = 0.
    $$
    Then it is clear that $a,b,d \in F$ can be arbitrary and, having chosen them, $c$ is completely determined. This gives $SU((U_{A})_{\sigma}) = 3 SU(F)$. 
    
    If, on the other hand, $\mathrm{char}(F) = 2$, then we may write $K = F(\gamma)$ for some Artin-Schreier root $\gamma$, so $\gamma^{2} + \gamma \in F$ and $\theta(\gamma) = \gamma + 1$. Then, writing $t = a+b\gamma$ and $u = c + d\gamma$ as above, the equation $t\theta(t) + u + \theta(u) = 0$ yields the equivalent equation 
    $$
    (a + b\gamma)(a + b\gamma + b) + (c + d\gamma) + (c + d\gamma + d) = a^{2} + b^{2}(\gamma^{2}+\gamma) + ab + d = 0.
    $$
    so $a,b,c \in F$ can be arbitrary and, having chosen them, $d \in F$ is completely determined. This likewise gives $SU((U_{A})_{\sigma}) = 3 SU(F)$. 
    
    Since $|A| = 3$, we have $SU(U_{A}) = 3SU(K)$.  Since $SU(K) = 2SU(F)$, this gives the desired conclusion. 
    \end{proof}

\begin{lem} \label{lem: lifting}
    Let $P_{J} \leq G_{\sigma}$ be a standard parabolic subgroup and let $I \subseteq \Pi$ be the union of the elements of $J$ (which, recall, are $\sigma$-orbits). Let $P^{*}_{I}$ denote the associated standard parabolic subgroup of $G$. Then 
    $$
    SU(G/P^{*}_{I}) \leq 2 \cdot SU(G_{\sigma}/P_{J}).
    $$
    
\end{lem}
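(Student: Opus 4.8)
The strategy is to evaluate both $SU$-ranks as explicit multiples of $SU(K)$ and then to match them using the combinatorics of the Weyl group, the factor of $2$ ultimately arising from the quadratic extension $K/F$. First, for the twisted side, Lemma~\ref{lem: root space iso} gives, with $w_{0} = w^{J}w_{J}$ for $w^{J} \in W_{\sigma}^{J}$ and $w_{J} \in (W_{\sigma})_{J}$, that $SU(G_{\sigma}/P_{J}) = SU((V^{+}_{w^{J}})_{\sigma})$. Since $w^{J} \in W_{\sigma}$ and $\sigma$ preserves $\Phi^{+}$ and $\Phi^{-}$, the set $\Phi^{-} \cap w^{J}(\Phi^{+})$ is $\sigma$-stable, hence a disjoint union of $\sigma$-orbits $A_{1}, \ldots, A_{m}$, and accordingly $V^{+}_{w^{J}} = \prod_{i} U_{A_{i}}$ and $(V^{+}_{w^{J}})_{\sigma} = \prod_{i} (U_{A_{i}})_{\sigma}$. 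Applying Lemma~\ref{lem: twisted roots} to each orbit and summing (the $SU$-rank being additive along these unipotent products) yields
\[
SU(V^{+}_{w^{J}}) = 2\, SU\bigl((V^{+}_{w^{J}})_{\sigma}\bigr) = 2\, SU(G_{\sigma}/P_{J}).
\]
Since each $U_{\alpha}$ is definably isomorphic to $(K,+)$, we also have $SU(V^{+}_{w^{J}}) = |\Phi^{-} \cap w^{J}(\Phi^{+})| \cdot SU(K) = \ell(w^{J}) \cdot SU(K)$, where $\ell$ denotes the length function of $W$. Hence $2\,SU(G_{\sigma}/P_{J}) = \ell(w^{J}) \cdot SU(K)$.

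For the untwisted side, Lemma~\ref{lem: dimension inequality}(3) and its proof give $SU(G/P^{*}_{I}) = \mathrm{dim}(G/P^{*}_{I}) \cdot SU(K) = \ell(w_{0}^{I}) \cdot SU(K)$, where $w_{0}^{I}$ is the longest element of $W^{I}$ and $\ell(w_{0}^{I}) = |\Phi^{+} \setminus \Phi_{I}^{+}|$ is the number of positive roots lying outside the Levi factor of $P^{*}_{I}$. The step I expect to be the crux is then the purely combinatorial identity $w^{J} = w_{0}^{I}$ in $W$. Granting that $w_{0}$ is the longest element of the Coxeter group $W_{\sigma}$ (it lies in $W_{\sigma}$ by hypothesis and, sending every positive root of the restricted root system to a negative one, is longest there), the factorisation $w_{0} = w^{J}w_{J}$ exhibits $w_{J}$ as the longest element of $(W_{\sigma})_{J}$. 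Folding now identifies the parabolic $(W_{\sigma})_{J} = \langle s_{O} : O \in J \rangle$ with the $\sigma$-fixed subgroup $(W_{I})_{\sigma}$ of the standard parabolic $W_{I} \leq W$, where $I = \bigcup_{O \in J} O$; as $W_{I}$ is $\sigma$-stable, its longest element $w_{0,I}$ is $\sigma$-fixed and is also the longest element of $(W_{I})_{\sigma}$, so $w_{J} = w_{0,I}$. Since $w_{0,I}$ is an involution, $w^{J} = w_{0}w_{0,I} = w_{0}^{I}$, and in particular $\ell(w^{J}) = \ell(w_{0}^{I}) = |\Phi^{+} \setminus \Phi_{I}^{+}|$.

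Combining the two computations with this identity gives
\[
SU(G/P^{*}_{I}) = \ell(w_{0}^{I}) \cdot SU(K) = \ell(w^{J}) \cdot SU(K) = 2\, SU(G_{\sigma}/P_{J}),
\]
so that equality in fact holds, which in particular yields the asserted bound $SU(G/P^{*}_{I}) \leq 2\,SU(G_{\sigma}/P_{J})$. I expect the main obstacle to be exactly the Weyl-group bookkeeping of the middle paragraph: relating the twisted parabolic decomposition $w_{0} = w^{J}w_{J}$ to the untwisted one via the folding identity $(W_{\sigma})_{J} = (W_{I})_{\sigma}$, together with the care needed to treat $\ell(w^{J})$ as a length in $W$ rather than in $W_{\sigma}$. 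For the inequality alone one would only need $\ell(w^{J}) \geq \ell(w_{0}^{I})$, but the folding argument delivers equality outright; the rank inputs (the factor of $2$ from Lemma~\ref{lem: twisted roots} and the formula $SU = \mathrm{dim} \cdot SU(K)$ from Lemma~\ref{lem: dimension inequality}) are comparatively routine.
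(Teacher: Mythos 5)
Your proof is correct, and its overall architecture coincides with the paper's: both reduce the two ranks to the Bruhat cells $V^{+}_{w^{I}}$ and $(V^{+}_{w^{J}})_{\sigma}$ (via Lemma~\ref{lem: root space iso} and the untwisted cell parameterization underlying Lemma~\ref{lem: dimension inequality}), and both extract the factor $2$ from the orbitwise computation of Lemma~\ref{lem: twisted roots}. Where you diverge is at the Weyl-group step, and there you prove strictly more than the paper does. The paper's argument needs only the containment $w_{J} \in (W_{\sigma})_{J} = (W_{I})_{\sigma} \subseteq W_{I}$: this places $w^{J}$ and $w^{I}$ in the same coset $w_{0}W_{I}$, and since $w^{I}$ is the minimal-length representative of that coset, $\ell(w^{I}) \leq \ell(w^{J})$ follows in one line, giving $SU(V^{+}_{w^{I}}) \leq SU(V^{+}_{w^{J}}) = 2\, SU(G_{\sigma}/P_{J})$ directly. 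You instead establish the identity $w^{J} = w^{I}$ outright, by identifying $w_{J}$ with the longest element $w_{0,I}$ of $W_{I}$ (using that $w_{0}$ is longest in $W_{\sigma}$, the folding identity $(W_{\sigma})_{J} = (W_{I})_{\sigma}$, $\sigma$-fixedness of $w_{0,I}$, and uniqueness of longest elements in finite Coxeter groups); your bookkeeping here is sound --- in particular the $\sigma$-stability of $\Phi^{-} \cap w^{J}(\Phi^{+})$, the orbit decomposition of $(V^{+}_{w^{J}})_{\sigma}$, and the distinction between $W$-length and $W_{\sigma}$-length are all handled correctly. What your route buys is the sharper conclusion that equality $SU(G/P^{*}_{I}) = 2 \cdot SU(G_{\sigma}/P_{J})$ always holds, so the factor of $2$ in the lemma is exact and no slack enters at this step of Proposition~\ref{prop: bound twisted}; what the paper's route buys is brevity, since only the inequality is used downstream and the minimal-coset-representative observation dispenses with the longest-element analysis entirely.
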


\begin{proof}
Write the longest element $w_{0} \in W$ as $w^{I} \cdot w_{I}$ with $w^{I} \in W^{I}$ and $w_{I} \in W_{I}$.  Note that, since $w_{J} \in (W_{I})_{\sigma} = (W_{\sigma})_{J}$, we have that $w^{I}$ and $w^{J}$ are in the same coset of $W_{I}$.  Since $w^{I}$ is the shortest length representative of its $W_{I}$ coset in $W$, we necessarily have $\ell(w^{I}) \leq \ell(w^{J})$ and therefore 
$$
SU(V^{+}_{w^{I}}) \leq SU(V^{+}_{w^{J}}).
$$
By \cite[Lemma 3.1.2(ii)]{sweeting_decomposing} (as in Lemma \ref{lem: root space iso}), $G/P^{*}_{I}$ is in definable bijection with $V^{+}_{w^{I}}$. Now $(V^{+}_{w^{J}})_{\sigma}$ can be expressed as a product of root subgroups of $G_{\sigma}$ of the form $(U_{A})_{\sigma}$ for $A$ a $\sigma$-orbit of roots, so, by Lemmas \ref{lem: twisted roots} and \ref{lem: root space iso}, we have 
$$
SU(G/P^{*}_{I}) \leq SU(V^{+}_{w^{J}}) \leq 2 \cdot SU((V^{+}_{w^{J}})_{\sigma}) = 2 \cdot SU(G_{\sigma}/P_{J}).
$$\end{proof}

\begin{prop}\label{prop: bound twisted}
    Suppose $n = SU(G_{\sigma}/P)$ for a parabolic subgroup $P \leq G_{\sigma}$. Then the SU-rank of the associated (untwisted) group $G$ satisfies $SU(G) \leq 8n^{2} + 2n$. 
    
    In particular, $SU(G_{\sigma}) \leq 8n^{2} + 2n$, and, if $G$ is of type $A_2$, then $n > 1$.
\end{prop}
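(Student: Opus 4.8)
The plan is to combine the lifting inequality of Lemma~\ref{lem: lifting} with the untwisted bound of Proposition~\ref{prop: Chevalley bound}. Writing $P = P_J$ for the standard parabolic and $I \subseteq \Pi$ for the union of the $\sigma$-orbits comprising $J$, Lemma~\ref{lem: lifting} gives $SU(G/P^{*}_{I}) \leq 2n$ for the associated parabolic $P^{*}_{I} \leq G$. Setting $m := SU(G/P^{*}_{I})$, I would then apply Proposition~\ref{prop: Chevalley bound} to the untwisted simple Chevalley group $G = G(K)$ over the pseudo-finite field $K$ (which has finite $SU$-rank, being a quadratic extension of $F$), obtaining $SU(G) \leq 2m^{2} + m$. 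Since $m \leq 2n$ and $t \mapsto 2t^{2} + t$ is increasing on $t \geq 0$, this yields
\[
SU(G) \leq 2(2n)^{2} + 2n = 8n^{2} + 2n.
\]

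For the first part of the ``in particular'' clause, the inequality $SU(G_{\sigma}) \leq 8n^{2} + 2n$ is then immediate: $G_{\sigma}$ is a definable subgroup of $G = G(K)$, so monotonicity of $SU$-rank on definable subsets gives $SU(G_{\sigma}) \leq SU(G)$.

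The remaining claim---that $n > 1$ when $G$ is of type $A_{2}$---is where the bookkeeping is most delicate. Here $G_{\sigma} = {}^{2}A_{2}$ has twisted rank $1$: the graph automorphism swaps the two simple roots, so $S$ consists of the single $\sigma$-orbit $\{\alpha_{1}, \alpha_{2}\}$, and hence the only proper standard parabolic of $G_{\sigma}$ is the Borel $B_{\sigma}$, forcing $P = B_{\sigma}$. Applying Lemma~\ref{lem: root space iso} with $J = \emptyset$ (so $w^{J} = w_{0}$), I would compute $n = SU(G_{\sigma}/B_{\sigma}) = SU((V^{+}_{w_{0}})_{\sigma})$, where $V^{+}_{w_{0}} = \prod_{\alpha \in \Phi^{-}} U_{\alpha}$ runs over all three negative roots. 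These negative roots form a single Steinberg unit of type $A_{2}$ (the roots $-\alpha_{1}, -\alpha_{2}$ are swapped by $\sigma$, and their sum $-\alpha_{1}-\alpha_{2}$ is both $\sigma$-fixed and a root), so the type-$A_{2}$ case of Lemma~\ref{lem: twisted roots} gives $SU((V^{+}_{w_{0}})_{\sigma}) = 3\,SU(F)$. Since $F$ is an infinite pseudo-finite field we have $SU(F) \geq 1$, whence $n = 3\,SU(F) \geq 3 > 1$. The main obstacle is precisely to identify this type-$A_{2}$ twisted root subgroup correctly---rather than naively reading off the individual $\sigma$-orbits of the negative roots---so that the norm--trace relation in Lemma~\ref{lem: twisted roots} produces the factor $3\,SU(F)$ and not a smaller value.
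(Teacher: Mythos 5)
Your proof is correct, and for the main bound it is essentially the paper's own route: the paper likewise reduces to $P = P_{J}$, applies Lemma~\ref{lem: lifting} to get $SU(G/P^{*}_{I}) \leq 2n$, and then runs through Lemma~\ref{lem: dimension inequality}, Fact~\ref{fact: sam fact} and Fact~\ref{fact: dim bound 2} by hand --- which is exactly the content of Proposition~\ref{prop: Chevalley bound} that you invoke as a black box --- arriving at $SU(G) \leq \frac{8n^{2}}{SU(K)} + 2n \leq 8n^{2}+2n$, the same number as your composition $2(2n)^{2}+2n$. (Both arguments implicitly use that $G$ is of classical type, since $G_{\sigma}$ is assumed classical throughout this subsection, so Fact~\ref{fact: dim bound 2} applies.) Where you genuinely diverge is the $A_{2}$ clause. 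The paper extracts $n>1$ from the inequality already in hand: the Lie rank $2$ of $G$ is at most $\frac{2n}{SU(K)}$, so $n=1$ would force $1 = SU(K) = 2\,SU(F)$, which is impossible as $SU(F) \geq 1$. You instead compute $n$ outright, and your computation is sound: for ${}^{2}A_{2}$ the set $S$ of $\sigma$-orbits on $\Pi$ is a singleton, so the only proper standard parabolic is $B_{\sigma}$; Lemma~\ref{lem: root space iso} with $J = \emptyset$ gives $n = SU\bigl((V^{+}_{w_{0}})_{\sigma}\bigr)$ with $V^{+}_{w_{0}} = \prod_{\alpha \in \Phi^{-}} U_{\alpha}$; and the three negative roots form a single type-$A_{2}$ configuration in the sense of Steinberg's Lemma 63 --- you are right that Lemma~\ref{lem: twisted roots} must be applied to this class as a whole rather than to the individual $\sigma$-orbits $\{-\alpha_{1},-\alpha_{2}\}$ and $\{-(\alpha_{1}+\alpha_{2})\}$ separately --- yielding $n = 3\,SU(F) \geq 3$. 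Your version buys a sharper conclusion ($n \geq 3$ rather than merely $n > 1$) at the cost of a structural analysis of ${}^{2}A_{2}$; the paper's version is shorter and requires no knowledge of which parabolics of $G_{\sigma}$ exist.
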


\begin{proof}
    Since any parabolic subgroup is conjugate to a standard one, we reduce to the case that $P = P_{J}$ as above. Defining $I$ as in Lemma \ref{lem: lifting}, we have that $\mathrm{SU}(G/P^{*}_{I}) \leq 2n$. We know that $\mathrm{dim}(G/P^{*}_{I})$ is greater than or equal to the Lie rank of the Chevalley group $G=G(K)$ and that $SU(G/P^{*}_{I}) = \mathrm{dim}(G/P^{*}_{I}) \cdot SU(K)$. Therefore, the Lie rank of $G$ is at most $\frac{2n}{SU(K)}$, which entails, by Fact \ref{fact: dim bound 2}, that 
    $$
    \mathrm{dim}(G) \leq 2 \left(\frac{2n}{SU(K)}\right)^{2} + \left( \frac{2n}{SU(K)}\right),
    $$
    and so 
    $$
    SU(G) = SU(K) \cdot \mathrm{dim}(G) \leq 8n^{2} + 2n.
    $$
    Since $G_{\sigma} \leq G$ we have that $SU(G_{\sigma}) \leq SU(G) \leq 8n^{2} + 2n$, as desired.
    
    Finally note that if $G$ is of type $A_2$ then the Lie rank of $G$ is $2$ and is at most $\frac{2n}{SU(K)}$. If we had $n=1$ then $2 \leq \frac{2}{SU(K)}$ so $1=SU(K)=2SU(F)$, which cannot happen as $SU(F) \geq 1$.
\end{proof}

To conclude, this effectively reduces the twisted Chevalley case to the untwisted case, at the cost of multiplying by $4$.  The argument only handles the case of standard actions, but for the non-standard actions, we have the Burness-Liebeck-Shalev bound (Theorem~\ref{fact: small base}). So we have the following result.

\begin{prop} \label{prop: almost simple case}Suppose $(G,X)$ is a definably primitive pseudo-finite permutation group of finite SU-rank of almost simple type. Then if $SU(X) = n$, we have $SU(G) \leq 8n^{2} +2n$.  
\end{prop}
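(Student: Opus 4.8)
The plan is to use the standard/non-standard dichotomy from the definition preceding Theorem~\ref{fact: small base}, disposing of the non-standard case with the Burness--Liebeck--Shalev bound and reducing the standard case to the parabolic quotients handled by Propositions~\ref{prop: Chevalley bound} and~\ref{prop: bound twisted}. If $(G,X)$ is non-standard, then Lemma~\ref{lem: nonstandard bound} gives at once $SU(G) \leq 6n$; since $X$ is infinite we have $n \geq 1$, so $6n \leq 8n^{2} + 2n$ (equivalently $1 \leq 2n$), and the bound holds.

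So I would assume $(G,X)$ is standard. By Remark~\ref{rem:alternating}, no nonprincipal ultraproduct of alternating groups is of finite $SU$-rank, so the $d$-socle $S := \mathrm{Soc}_{d}(G)$ cannot be of alternating type; hence $S$ is a simple classical (twisted) Chevalley group over a pseudo-finite field. By Lemma~\ref{lem: red to simple}, $S$ has finite index in $G$ and acts transitively on $X$. Finite index subgroups share the same $SU$-rank (Lascar equality, Fact~\ref{fact:Lascar-eq}, since $SU(G/S) = 0$), so $SU(G) = SU(S)$ and it suffices to bound $SU(S)$ in terms of $n = SU(X) = SU(S/S_{x})$.

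In the principal standard configurations the point stabiliser $S_{x}$ is (conjugate to) a parabolic subgroup $P$ of $S$, namely the stabiliser of a subspace or of an incident pair of subspaces, so that $X$ is definably identified with $S/P$ and $n = SU(S/P)$. If $S$ is an untwisted Chevalley group, Proposition~\ref{prop: Chevalley bound} yields $SU(S) \leq 2n^{2} + n \leq 8n^{2} + 2n$, while if $S$ is a twisted Chevalley group, Proposition~\ref{prop: bound twisted} yields $SU(S) \leq 8n^{2} + 2n$. In either case $SU(G) = SU(S) \leq 8n^{2} + 2n$, as desired.

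The main obstacle is precisely this last reduction: one must verify that every standard configuration reduces to a parabolic quotient, so that Propositions~\ref{prop: Chevalley bound} and~\ref{prop: bound twisted} apply. Stabilisers of single subspaces and of nested pairs are genuinely parabolic, but the remaining standard possibilities---stabilisers of non-incident pairs of subspaces, and the exceptional family with $S = \mathrm{Sp}_{n}(q)$, $q$ even, and $S_{x} \cap S = O^{\pm}_{n}(q)$---are not. For these I would argue directly: since $S_{x}$ is itself (an extension of) a classical group, the same coordinatisation as in Lemma~\ref{lem: dimension inequality} gives $SU(S_{x}) = \mathrm{dim}(S_{x}) \cdot SU(F)$, whence by Lascar equality $SU(S/S_{x}) = \mathrm{dim}(S/S_{x}) \cdot SU(F)$. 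Combining the geometric inequality $\mathrm{dim}(S/S_{x}) \geq \mathrm{rank}(S)$ (which one checks holds in these cases, e.g.\ $\mathrm{dim}(\mathrm{Sp}_{2m}/O^{\pm}_{2m}) = 2m \geq m$) with Fact~\ref{fact: dim bound 2} reproduces the chain of inequalities of Proposition~\ref{prop: Chevalley bound} and in fact produces a strictly smaller bound, so the uniform estimate $SU(G) \leq 8n^{2} + 2n$ persists across all standard actions.
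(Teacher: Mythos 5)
Your proof is correct and follows the same overall route as the paper's: the non-standard case is dispatched by the base-of-size-$6$ bound of Lemma~\ref{lem: nonstandard bound} (and $6n \leq 8n^2+2n$), and the standard case is reduced via Lemma~\ref{lem: red to simple} to bounding $SU(\mathrm{Soc}_d(G))$, with Propositions~\ref{prop: Chevalley bound} and~\ref{prop: bound twisted} supplying the bound for parabolic quotients. Where you genuinely diverge is in your final paragraph, and the divergence is to your credit: the paper's proof simply asserts that a standard action identifies $X$ with $S/P$ for a parabolic $P$ ("parabolic subgroups are, by definition, the setwise stabilisers of subspaces of the natural module"), thereby passing over exactly the configurations you isolate --- stabilisers of nondegenerate subspaces or non-incident pairs, and the exceptional $S = \mathrm{Sp}_n(q)$, $q$ even, with stabiliser $O^\pm_n(q)$, none of which is parabolic in the strict sense. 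Your patch for these cases is sound: the coordinatisation of Lemma~\ref{lem: dimension inequality} applies equally to these reductive stabilisers (they are themselves (twisted) classical groups up to finite index), Lascar equality (Fact~\ref{fact:Lascar-eq}) then gives $SU(S/S_x) = \mathrm{dim}(S/S_x)\cdot SU(F)$, and the inequality $\mathrm{dim}(S/S_x) \geq \mathrm{rank}(S)$ --- which you verify in the symplectic/orthogonal case and which holds in the remaining families by the standard lower bounds on indices of maximal subgroups --- feeds into Fact~\ref{fact: dim bound 2} exactly as in Proposition~\ref{prop: Chevalley bound}. The only point I would press you on is that "which one checks holds in these cases" should be backed by the explicit (easy) dimension counts for the pair-stabiliser families, but this is routine; in exchange, your argument makes the bound independent of parabolicity of the stabiliser and thereby closes a loose step in the paper's own proof.
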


\begin{proof}Recall first that, when we say that $(G,X)$ is an almost simple group we in particular have that $(G,X)$ is primitive. By Lemma \ref{lem: nonstandard bound}, if $(G,X)$ is of non-standard type, then we have $SU(G) \leq 6n$.  Therefore, we may assume that the action is standard, so the simple group $\mathrm{Soc}_{d}(G)$ is of classical type.  Let $S = \mathrm{Soc}_{d}(G)$. As $(G,X)$ is primitive and $S \unlhd G$ is a normal subgroup, we know that the action of $S$ on $X$ is transitive. The action being standard entails that for $x \in X$, the stabiliser $S_{x}$ is the set-wise stabiliser of some subspace of the natural module for the classical pseudo-finite simple group $S$, which by Theorem \ref{th:wilson}, is either a Chevalley or twisted Chevalley group over a pseudo-finite field.  Parabolic subgroups are, by definition, the setwise stabilisers of subspaces of the natural module, so we identify $X$ with $S/P$ for some parabolic subgroup $P \leq S$.  In the case that $S$ is a Chevalley group, we obtain $SU(S) \leq 2n^{2} + n$ by Proposition \ref{prop: Chevalley bound}.  Otherwise, $S$ is a twisted Chevalley group so we get $SU(S) \leq 8n^{2} + 2n$ from the last proposition. Finally, because $G$ is of finite $SU$-rank, we know that $[G : S] < \infty$ (Lemma~\ref{lem: red to simple}) so we get 
    $$
    SU(G) = SU(S) \leq 8n^{2} + 2n,
    $$
    as desired.
\end{proof}



\subsection{Simple diagonal actions}

Before getting into the details of our bound in this case, we make some remarks on the shape of primitive permutation groups of simple diagonal action type. If $(H,Y)$ is a finite primitive permutation group of simple diagonal action type with $\mathrm{Soc}(H) = T^{k}$ for $T$ a non-abelian finite simple group, then, identifying $T^{k} \cong \mathrm{Inn}(T)^{k}$, $H$ may be viewed as a subgroup of 
    $$
    W = \{(a_{1}, \ldots, a_{k})\pi \in \mathrm{Aut}(T)^{k} \rtimes \mathrm{Sym}_{k} : a_{i} \equiv a_{j} \mod \mathrm{Inn}(T) \text{ for all }i,j\}.
    $$
    Letting $T_{i}$ denote the image of $\mathrm{Soc}(H)$ under the $i$th coordinate projection, we know $H$ acts by conjugation on the set $\{T_{1}, \ldots, T_{k}\}$ and thus we have $N = \bigcap_{i = 1}^{k} N_{H}(T_{i})$ is a  normal subgroup of $H$ and clearly the quotient $H/N$ is isomorphic to a subgroup of $\mathrm{Sym}_{k}$. Moreover, the natural map $N/T^{k} \to N_{H}(T_{1})/T_{1}C_{H}(T_{1})$ is injective, by the definition of $W$, since if a tuple in $W$ induces an inner automorphism of $T_{1}$ then it must induce an inner automorphism on all $T_{i}$'s and hence be an element of $T^{k}$.  In the following lemma, we use this, together with Fact \ref{fact:field-autom}, to argue that if $(G,X)$ is a pseudo-finite primitive permutation group of finite $SU$-rank of simple diagonal action type, then the $SU$-rank of $G$ and the $SU$-rank of $X$ can both be calculated in terms of the $SU$-rank of $T$, where $T$ is a simple factor of the socle of $G$. 


\begin{lem}\label{lemma:diagonal}
Suppose $(G,X)$ is a definably primitive pseudo-finite permutation group of finite $SU$-rank.  If $(G,X)$ is of simple diagonal action type, then 
$
SU(G) \leq 2SU(X).
$
\end{lem}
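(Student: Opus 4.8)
The plan is to compute both $SU(G)$ and $SU(X)$ exactly in terms of $SU(T)$, where $T$ is a simple factor of the $d$-socle, and then to reduce the desired inequality to the purely numerical fact that $k \leq 2(k-1)$ for $k \geq 2$. By Theorem~\ref{thm:LMT}(ii)(b) together with Lemma~\ref{lem:socle}, we may write $\mathrm{Soc}_{d}(G) = T_{1} \times \cdots \times T_{k}$, where the $T_{i}$ are definable isomorphic non-abelian simple pseudo-finite groups and $k > 1$; set $T = T_{1}$. Since the factors are definable, Fact~\ref{fact:Lascar-eq} gives $SU(\mathrm{Soc}_{d}(G)) = k \cdot SU(T)$.

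First I would show that $\mathrm{Soc}_{d}(G)$ has finite index in $G$, so that $SU(G) = k \cdot SU(T)$. Using the structural description recalled before the lemma, $G$ acts by conjugation on the set of factors $\{T_{1}, \ldots, T_{k}\}$ with image a subgroup of $\mathrm{Sym}_{k}$, and $k$ is finite; the kernel $N$ of this action satisfies that $N/\mathrm{Soc}_{d}(G)$ injects into the outer automorphism group of $T$ via the map $N/T^{k} \to N_{G}(T_{1})/T_{1}C_{G}(T_{1})$. By Theorem~\ref{th:wilson}, $T$ is a (twisted) Chevalley group over a pseudo-finite field, and, exactly as in the proof of Lemma~\ref{lem: red to simple}, its outer automorphisms decompose into diagonal, graph, and field automorphisms; the first two kinds are finite in number, and only finitely many field automorphisms can appear, since otherwise Lemma~\ref{lem: field aut} and Fact~\ref{fact:field-autom} would contradict the supersimplicity of $(G,X)$. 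Hence $G/\mathrm{Soc}_{d}(G)$ is finite and $SU(G) = SU(\mathrm{Soc}_{d}(G)) = k \cdot SU(T)$.

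Next I would compute $SU(X) = (k-1)SU(T)$. As $(G,X)$ is primitive and $\mathrm{Soc}_{d}(G) \unlhd G$ is a nontrivial normal subgroup, the socle acts transitively on $X$, so $X$ is in definable bijection with $\mathrm{Soc}_{d}(G)/(\mathrm{Soc}_{d}(G))_{x}$. In the simple diagonal action the stabiliser of a point in the socle is the full diagonal subgroup $D = \{(t, \ldots, t) : t \in T\} \cong T$, so another application of Fact~\ref{fact:Lascar-eq} yields
$$
SU(X) = SU(\mathrm{Soc}_{d}(G)) - SU(D) = k \cdot SU(T) - SU(T) = (k-1)SU(T).
$$
Combining the two computations gives $SU(G) = k \cdot SU(T) \leq 2(k-1)SU(T) = 2SU(X)$, where the inequality holds because $k \geq 2$.

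The main obstacle I anticipate is the second step: justifying cleanly, in the pseudo-finite setting, that $G/\mathrm{Soc}_{d}(G)$ is finite — in particular ruling out infinitely many field automorphisms of the factor $T$ via Fact~\ref{fact:field-autom} and the supersimplicity hypothesis — and verifying that the identification of the socle point stabiliser with the diagonal $D \cong T$ holds uniformly across the ultraproduct, so that the rank computation $SU(X) = (k-1)SU(T)$ is valid. Once these structural points are secured, the remainder is a short Lascar-equality calculation and an elementary inequality in $k$.
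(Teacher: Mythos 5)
Your proposal is correct and follows essentially the same route as the paper: both compute $SU(G) = k\,SU(T)$ by showing $G$ is finite over $\mathrm{Soc}_{d}(G)$ (via the injection of $N/\mathrm{Soc}_{d}(G)$ into $N_{G}(T_{1})/T_{1}C_{G}(T_{1})$ and Fact~\ref{fact:field-autom} ruling out infinite definable groups of field automorphisms), compute $SU(X) = (k-1)SU(T)$ from the diagonally embedded point stabiliser via Lascar equality, and conclude from $\frac{k}{k-1} \leq 2$. The only cosmetic difference is that you identify $X$ with $\mathrm{Soc}_{d}(G)/(\mathrm{Soc}_{d}(G))_{x}$ directly, while the paper works with $N = \bigcap_{i=1}^{k} N_{G}(T_{i})$ and $N/N_{x}$; the rank computation is the same, and the uniformity of the diagonal stabiliser across the ultraproduct that you flag is exactly what the paper secures by \L{}o\'s' Theorem from the finite simple diagonal case.
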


\begin{proof}
     Let $\mathrm{Soc}_{d}(G) = T^{k}$ for a non-abelian simple pseudo-finite group $T$.  We know by Lemma~\ref{lem:socle} that each factor $T_{i}$ is definable and thus so is the group $N = \bigcap_{i = 1}^{k} N_{G}(T_{i})$.  By \L o\'s' Theorem and the paragraph preceding the lemma, we know that $[G: N] \leq k!$ so we have $SU(G) = SU(N)$. Similarly, we know that the natural map $$N/\mathrm{Soc}_{d}(G) \to N_{G}(T_{1})/T_{1}C_{G}(T_{1})$$ is injective.  By Fact~\ref{fact:field-autom}, we know that there can be no infinite definable group of outer automorphisms of $T_{1}$ and hence $N_{G}(T_{1})/T_{1}C_{G}(T_{1})$ is finite.  This entails $SU(G) = SU(N) = SU(\mathrm{Soc}_{d}(G))= SU(T^{k}) = k SU(T)$.  

    Fix $x \in X$.  We may definably identify $X$ and $N/N_{x}$.  Moreover, we know that, in primitive permutation groups of simple diagonal action type, the stabiliser $N_{x}$ is a diagonally embedded copy of $T$ so any/all of the coordinate projections from $N_{x}$ to $T_{i}$ are isomorphisms.  This entails that 
    $$
    SU(X) = SU(N/N_{x}) = SU(N) - SU(T) = (k-1)SU(T).
    $$

    Putting it all together, we have 
    $$
    SU(G) = k SU(T) = \frac{k}{k-1}SU(X).
    $$
    As $\frac{k}{k-1} \leq 2$ for all $k\geq 2$, we obtain the desired inequality.
\end{proof}

\subsection{Conclusion} 

Before concluding our main theorem, we still need the following easy lemma.

\begin{lem}\label{lem:productaction}
    Suppose there is a function $\rho : \mathbb{N} \to \mathbb{N}$ such that, whenever $(G,X)$ is a definably primitive permutation group of finite $SU$-rank of almost simple or simple diagonal action type, $SU(G) \leq \rho (SU(X))$. Assume that $\rho$ has the property that $\rho(ab) \geq a \rho(b)$ for all $a,b \in \mathbb{N}$. Then whenever $(G,X)$ is a definably primitive permutation group of finite $SU$-rank of product action type, we have $SU(G) \leq \rho (SU(X))$. 
\end{lem}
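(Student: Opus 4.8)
The plan is to invoke the structure handed to us by Theorem~\ref{thm:LMT}(ii)(c). In the product action case, that result provides an integer $\ell$ and a primitive permutation group $(H,Y)$, definable in $(G,X)$ (hence itself pseudo-finite of finite $SU$-rank), such that $G$ is definably identified with a subgroup of $H \, \mathrm{wr} \, \mathrm{Sym}_{\ell}$, the set $X$ is identified with $Y^{\ell}$ equipped with the product action, and crucially $(H,Y)$ itself falls into either the almost simple or the simple diagonal action case. Thus the standing hypothesis on $\rho$ applies directly to $(H,Y)$, yielding $SU(H) \leq \rho(SU(Y))$. The whole proof then reduces to bookkeeping of $SU$-ranks, relating those of $G$ and $X$ to those of $H$ and $Y$.

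Next I would carry out the two rank computations. First, since $X$ is definably identified with $Y^{\ell}$, additivity of $SU$-rank on independent generic coordinates (equivalently, repeated application of the Lascar equality, Fact~\ref{fact:Lascar-eq}, to the coordinate projections of $Y^{\ell}$) gives $SU(X) = \ell \cdot SU(Y)$. Second, $H \, \mathrm{wr} \, \mathrm{Sym}_{\ell} = H^{\ell} \rtimes \mathrm{Sym}_{\ell}$ has $H^{\ell}$ as a definable normal subgroup of finite index, because $\mathrm{Sym}_{\ell}$ is finite; so by the Lascar equality $SU(H \, \mathrm{wr} \, \mathrm{Sym}_{\ell}) = SU(H^{\ell}) = \ell \cdot SU(H)$. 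As $G$ is definably a subgroup of $H \, \mathrm{wr} \, \mathrm{Sym}_{\ell}$ and $SU$-rank is monotone under inclusion of definable sets, we obtain $SU(G) \leq \ell \cdot SU(H)$.

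Finally I would combine these, using the superadditivity assumption $\rho(ab) \geq a\,\rho(b)$ with $a = \ell$ and $b = SU(Y)$, to conclude
$SU(G) \leq \ell \cdot SU(H) \leq \ell \cdot \rho(SU(Y)) \leq \rho(\ell \cdot SU(Y)) = \rho(SU(X))$,
as required. I do not expect a genuine obstacle here: the lemma is essentially a formal manipulation, and the hypothesis $\rho(ab) \geq a\rho(b)$ is tailored precisely to absorb the factor of $\ell$ that the product/wreath structure introduces on both sides. The only points demanding mild care are ensuring that the definability of $(H,Y)$ inside $(G,X)$ (guaranteed by Theorem~\ref{thm:LMT}) makes all the subgroups and quotients invoked above definable, so that the Lascar equality genuinely applies, and confirming that $(H,Y)$ inherits finite $SU$-rank so that the hypothesis on $\rho$ is legitimately available.
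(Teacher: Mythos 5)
Your proposal is correct and follows essentially the same route as the paper's proof: pull $(H,Y)$ and $\ell$ from the product-action structure in Theorem~\ref{thm:LMT}(ii)(c), then chain $SU(G) \leq SU(H \mathrm{wr} \mathrm{Sym}_{\ell}) = \ell \cdot SU(H) \leq \ell \cdot \rho(SU(Y)) \leq \rho(SU(X))$ using the superadditivity of $\rho$. Your version merely makes explicit the steps the paper leaves implicit (the Lascar equality for $SU(H^{\ell} \rtimes \mathrm{Sym}_{\ell})$, the additivity giving $SU(X) = \ell \cdot SU(Y)$ in finite rank, and monotonicity of $SU$ under definable inclusion), all of which are sound.
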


\begin{proof}
Since $(G,X)$ is a product action, we know there is $(H,Y)$ of either almost simple or simple diagonal action type and $\ell$ such that $G \leq H \mathrm{wr} \mathrm{Sym}_{\ell}$ and $X = Y^{\ell}$.  Then we calculate 
$$SU(G) \leq SU(H \mathrm{wr} \mathrm{Sym}_{\ell}) = \ell \cdot SU(H) \leq \ell \cdot \rho(SU(Y)) \leq \rho (SU(X)),$$
as desired. 
\end{proof}

Now we come to a proof of Theorem~\ref{th: main}:

\begin{thm} \label{th: main}
There is a function $\rho : \mathbb{N} \rightarrow \mathbb{N}$ so that if $(G,X)$ is a pseudo-finite definably primitive permutation group of finite $SU$-rank then $SU(G) \leq \rho( SU(X))$.

More precisely, if some/each point stabiliser $G_{x}$ is infinite, then, setting $r=SU(X)$, the following holds. \begin{enumerate}
\item If ${\rm Rad}(G)\neq 1$ then $SU(G) \leqslant r + (r^2+1)r$.
\item If ${\rm Rad}(G)= 1$ then one of the following holds. \begin{enumerate}
\item $(G,X)$ is an almost simple group and $SU(G)  \leq 8r^{2} + 2r$.
\item $(G,X)$ is of simple diagonal action type and $SU(G) \leqslant 2r$.
\item $(G,X)$ is of product action type and $SU(G)  \leq 8r^{2}+2r$.
\end{enumerate} 
\end{enumerate}
\end{thm}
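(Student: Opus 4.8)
The plan is to assemble the per-case bounds proved throughout this section into a single statement. The first move is to reduce to the primitive case. If the point stabiliser $G_{x}$ is finite, then by additivity of the $SU$-rank we have $SU(G) = SU(G/G_{x}) = SU(X)$, so any function $\rho$ with $\rho(n) \geq n$ suffices; hence I may assume that $G_{x}$ is infinite. In that case Theorem~\ref{th:primitive} promotes definable primitivity to genuine primitivity, which is precisely the hypothesis needed to invoke the Liebeck--Macpherson--Tent classification (Theorem~\ref{thm:LMT}). Since the action is transitive, all point stabilisers are conjugate, so the assumption that some $G_{x}$ is infinite is equivalent to the assumption that each is.

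Theorem~\ref{thm:LMT} now divides the argument into four cases according to the O'Nan--Scott type, and in three of these the bound is already in hand. Writing $r = SU(X)$: if ${\rm Rad}(G) \neq 1$ then $(G,X)$ is affine and Lemma~\ref{lem:affinecase} gives $SU(G) \leq r + (r^{2}+1)r$; if ${\rm Rad}(G) = 1$ and the action is of almost simple type, then Proposition~\ref{prop: almost simple case} gives $SU(G) \leq 8r^{2} + 2r$; and if it is of simple diagonal action type, then Lemma~\ref{lemma:diagonal} gives $SU(G) \leq 2r$.

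The remaining case, product action, is handled by Lemma~\ref{lem:productaction}. To apply it I would take $\rho(n) = 8n^{2} + 2n$ and verify its two hypotheses. First, $\rho$ must dominate the bounds obtained in the almost simple and simple diagonal cases, which holds because $2n \leq 8n^{2} + 2n$ for every $n$. Second, $\rho$ must satisfy $\rho(ab) \geq a\,\rho(b)$, which unwinds to $8a^{2}b^{2} + 2ab \geq 8ab^{2} + 2ab$, i.e.\ to $a \geq 1$. Lemma~\ref{lem:productaction} then yields $SU(G) \leq \rho(SU(X)) = 8r^{2} + 2r$ in the product action case as well, matching the stated bound.

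Finally, to produce the single function $\rho$ promised in the first sentence of the theorem, I would set $\rho(n) = n^{3} + 8n^{2} + 2n$, which simultaneously dominates the affine bound $r^{3} + 2r$, the common semi-simple bound $8r^{2} + 2r$, the diagonal bound $2r$, and the value $r$ arising from the finite-stabiliser case. The step requiring the most care is the product action case: not because any computation is hard, but because it is the only place where the estimate is bootstrapped, via the wreath product $H \,\mathrm{wr}\, \mathrm{Sym}_{\ell}$, from a lower-rank permutation group $(H,Y)$ definable inside $(G,X)$, and the multiplicativity condition $\rho(ab) \geq a\,\rho(b)$ is exactly what lets the induction through the wreath product close.
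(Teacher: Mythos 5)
Your proposal is correct and follows the paper's own proof essentially step for step: the same reduction to infinite point stabilisers via $SU(G)=SU(G/G_x)=SU(X)$, the same upgrade from definable primitivity to primitivity via Theorem~\ref{th:primitive} so that Theorem~\ref{thm:LMT} applies, and the same per-case invocations of Lemma~\ref{lem:affinecase}, Proposition~\ref{prop: almost simple case}, Lemma~\ref{lemma:diagonal}, and Lemma~\ref{lem:productaction} with $\rho(n)=8n^{2}+2n$, including the verification that $\rho(ab)\geq a\rho(b)$. Your explicit unified function $\rho(n)=n^{3}+8n^{2}+2n$ is a harmless addition the paper leaves implicit.
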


\begin{proof}
If $(G,X)$ is a definably primitive permutation group of finite $SU$-rank, then we may assume that each point stabiliser $G_{x}$ is infinite, else $SU(G) = SU(X)$, as explained at the beginning of the section. Thus, by Theorem \ref{thm:LMT}, we may assume $(G,X)$ is primitive. Then if $\mathrm{Rad}(G) \neq 1$, then Theorem \ref{thm:LMT} implies that $(G,X)$ is of affine type and case (1) is handled by Lemma \ref{lem:affinecase}. If $\mathrm{Rad}(G)  = 1$, then, by Theorem \ref{thm:LMT}, $(G,X)$ is of almost simple, simple diagonal action, or product action type. The case of almost simple type (2)(a) is handled by Proposition \ref{prop: almost simple case} and the case of simple diagonal action type 2(b) is handled by Lemma \ref{lemma:diagonal}.  Since the bound $\rho(r) = 8r^{2} + 2r$ works for both the almost simple and simple diagonal action types and satisfies $\rho(ab) \geq a \rho(b)$ for all $a,b \in \mathbb{N}$, we obtain the bound for product action type 2(c) from Lemma \ref{lem:productaction}.
\end{proof}

\begin{rem}
In the end, our use of the hypothesis `finite $SU$-rank' is quite light, and can perhaps be relaxed to `finite-dimensional with fine and additive dimension'. We use the fact that we are in the narrower context of finite $SU$-rank in exactly two places: we use indecomposability in the proofs of Theorem~\ref{th:primitive} and Lemma~\ref{lem:socle}, and we use Fact~\ref{fact:field-autom}. As is explained before Question~\ref{q:prim}, it should not be hard to generalise Lemma~\ref{lem:socle} to the context of finite-dimensional groups with fine and additive dimension. It also seems likely that a pseudo-finite field $F$ has no infinite set $B$ of field automorphisms so that the pair $(F,B)$ has a finite-dimensional theory with fine and additive dimension. Therefore we believe that a positive answer to Question~\ref{q:prim}(b) would generalise Theorem~\ref{th: main} to the context of finite-dimensional groups with fine and additive dimension.
\end{rem}

\section{The case $SU(X)=1$}\label{sec:trank1}The following result is a combination of \cite[Theorem 1.3]{Elwesetal} and \cite[Theorem 1.16]{Zou2020}.

\begin{thm}[Elwes-Macpherson-Ryten-Jaligot and Zou]\label{th:rank1} Let $(G,X)$ be a pseudo-finite definably primitive supersimple group of finite $SU$-rank. Assume that $SU(X)=1$. Then exactly one of the following holds.\begin{enumerate}
\item $SU(G)=1$ and ${\rm Soc}_d(G)$ is either a divisible torsion-free abelian group or an elementary abelian $p$-group, has finite index
in $G$, and acts regularly on $X$.
\item $SU(G) = 2$, ${\rm Soc}_d(G)$ is abelian and so regular and identified with $X$. There is an interpretable pseudo-finite field $F$ of $SU$-rank $1$, such that $(G,X)$ is definably isomorphic to $(F^+\rtimes H, F^+)$ for some finite index subgroup $H \leqslant F^\times$ .
\item $SU(G) = 3$ and there is an interpretable pseudo-finite field $F$ of $SU$-rank $1$ so that ${\rm Soc}_d(G)\cong {\rm PSL}_2(F)$ is of finite index in $G$, ${\rm PSL}_2(F)\leqslant G \leqslant  {\rm P\Gamma L}_2(F)$, and $X$ can be identified with ${\rm PG}_1(F)$ in such a way that the action of $G$ on ${\rm PG}_1(F)$ is the natural one. 
\end{enumerate}
\end{thm}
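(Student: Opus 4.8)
The plan is to specialise the machinery of Section~\ref{sec:bounds} to the case $r = SU(X) = 1$ and read off the three surviving configurations. First I would invoke the reduction at the start of Section~\ref{sec:bounds} together with Theorem~\ref{th:primitive}: either the point stabiliser $G_x$ is finite, in which case $SU(G) = SU(G/G_x) = SU(X) = 1$, or $G_x$ is infinite, in which case $(G,X)$ is primitive and Theorem~\ref{thm:LMT} applies.

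I would handle the finite-stabiliser case first. Since $SU(G)=1$, Proposition~\ref{propo:socle} forces ${\rm Rad}(G)\neq 1$: if instead ${\rm Rad}(G)=1$, then ${\rm Soc}_d(G)$ would be a nontrivial product of infinite simple pseudo-finite groups, each of which, being a (twisted) Chevalley group over a pseudo-finite field, has $SU$-rank at least $3$ (the minimal case being ${\rm PSL}_2$), contradicting $SU(G)=1$. Thus Proposition~\ref{propo:socle}(1) gives $G = A\rtimes G_x$ with $A = {\rm Soc}_d(G) = C_G(A)$ acting regularly on $X$ and either elementary abelian or torsion-free divisible, and the finiteness of $G_x$ makes $A$ of finite index in $G$. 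This is exactly case (1).

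In the primitive case I would run through the four O'Nan--Scott alternatives of Theorem~\ref{thm:LMT} with $r=1$. The simple diagonal and product-action types are eliminated by rank: Lemma~\ref{lemma:diagonal} gives $SU(X)=(k-1)SU(T)\geq 3$ for diagonal type, while $SU(X) = \ell\cdot SU(Y)\geq 2$ for product action (as $\ell\geq 2$ and $SU(Y)\geq 1$), both incompatible with $SU(X)=1$. The affine type yields case (2) directly from the $r=1$ clause of Lemma~\ref{lem:affinecase}, giving $SU(G)=2$ and the identification $(G,X)\cong (F^+\rtimes H, F^+)$ with $H$ of finite index in $F^\times$ and $SU(F)=1$. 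For the almost-simple type, $X$ identifies with $S/P$ for $S={\rm Soc}_d(G)$ a simple classical (twisted) Chevalley group and $P$ parabolic; the crucial observation is that $SU(S/P)=1$, together with Fact~\ref{fact: sam fact} and Lemma~\ref{lem: dimension inequality}(3), forces both the Lie rank to be $1$ and $SU(F)=1$. The twisted case is ruled out, since by Lemma~\ref{lem: lifting}, Lemma~\ref{lem: dimension inequality}(3) applied over the quadratic extension $K$, and the relation $SU(K)=2SU(F)\geq 2$, the untwisted group $G$ would satisfy $2\geq SU(G/P^{*}_I)\geq (\text{Lie rank}\,G)\cdot SU(K)\geq 2\cdot(\text{Lie rank}\,G)$, forcing Lie rank $\leq 1$, which is impossible for a twisted classical type (whose untwisted cover has rank $\geq 2$). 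Hence $S={\rm PSL}_2(F)$, $X\cong {\rm PG}_1(F)$ with the natural action, $SU(G)={\rm dim}(G)\cdot SU(F)=3$, and ${\rm PSL}_2(F)\leqslant G\leqslant {\rm P\Gamma L}_2(F)$ with $S$ of finite index by Lemma~\ref{lem: red to simple}. This is case (3).

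Finally, exclusivity is immediate: the three cases are distinguished by the value $SU(G)\in\{1,2,3\}$. I expect the main obstacle to be the almost-simple analysis, namely extracting ${\rm PSL}_2$ as the unique surviving Lie type and ruling out all twisted classical groups from the single constraint $SU(X)=1$; everything else is bookkeeping against the rank bounds already established.
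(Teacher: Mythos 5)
Your treatment of cases (1) and (2), and your exclusion of the simple diagonal and product action types, track the paper's proof closely and are correct; your elimination of the twisted classical types via Lemma~\ref{lem: lifting}, Fact~\ref{fact: sam fact} and $SU(K)=2\,SU(F)\geq 2$ is in fact a slightly cleaner packaging of the paper's argument, since it handles type $A_2$ uniformly rather than via the separate clause in Proposition~\ref{prop: bound twisted}. (The paper also reaches the affine case for $SU(G)=2$ differently, via Wagner's soluble-by-finite result rather than directly through Theorem~\ref{thm:LMT}, but that is immaterial.) The genuine gap is at the opening move of your almost-simple analysis: you assert that $X$ identifies with $S/P$ for $P$ a parabolic subgroup of a \emph{classical} (twisted) Chevalley group $S$, but nothing you cite yields this. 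The standard/non-standard dichotomy gives a parabolic description only for standard actions; in the non-standard case your sole constraint is the base-size bound $SU(G)\leq 6$ of Lemma~\ref{lem: nonstandard bound}, which neither determines the point stabiliser nor excludes exceptional socles. The paper closes this in two steps you omit: first, the relation $\sim$ of Lemma~\ref{lem:congruencefact} (run as in the proof of Theorem~\ref{th:primitive}) shows $S_x$ has finitely many orbits on $X$, whence Seitz's theorem \cite[Theorem 2]{Seitz74} forces the action of $S$ to be on cosets of a parabolic subgroup; second, it must still dispose of the exceptional twisted types $^2B_2$, $^2G_2$, $^2F_4$, $^2E_6$, $^3D_4$ arising in non-standard actions.

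The Suzuki case shows concretely that ``bookkeeping against the rank bounds'' does not suffice here: when the underlying difference field has rank $1$, a group $S$ of type $^2B_2$ has $SU(S)=5\leq 6$, so it survives your only constraint in the non-standard case. The paper kills it using the separate Burness--Liebeck--Shalev fact \cite[Theorem 3]{Burnessetal2009} that such actions of Suzuki groups admit a base of size $3$, forcing $SU(S)\leq 3\,SU(X)=3$, a contradiction; the remaining exceptional types are excluded by order computations transferred through Ryten's bi-interpretability with (difference) fields. Without these ingredients your argument establishes cases (1) and (2) and the untwisted classical part of case (3), but does not show that the almost-simple case yields \emph{only} ${\rm PSL}_2(F)$.
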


In \cite[Theorem 1.3]{Elwesetal}, the authors proved Theorem~\ref{th:rank1} under the assumption `$Th(G)^{eq}$ eliminates $\exists^\infty$', and this assumption was used in many places of the proof. In \cite{Zou2020} Zou generalised \cite[Theorem 1.3]{Elwesetal} to the context of finite-dimensional groups with fine and additive dimension, without the assumption on elimination of $\exists^\infty$ (But with some extra assumptions in (2) and (3). Also her conclusion in (2) is only up to finite index.). Together these results prove Theorem~\ref{th:rank1}. In what follows we give an alternative proof for it, which we think should be seen as a nice application of \cite{LMT} (indeed, \cite{LMT} is also used in \cite{Elwesetal}).

\begin{proof}Let $(G,X)$ be as in Theorem~\ref{th:rank1}. If $SU(G)=SU(X)=1$ then (1) follows directly from Proposition~\ref{propo:socle}. So we may assume that $SU(G) > SU(X)$; hence the point stabilisers are infinite and $(G,X)$ is of one of the types: affine, almost simple, simple diagonal action or product action (as explained in the beginning of Section~\ref{sec:bounds}).

Suppose $SU(G)=2$. Then $G$ is soluble-by-finite \cite[Corollary 5.2]{Wagner2020} and thus an affine group. So Lemma~\ref{lem:affinecase} immediately gives us (2).

Let then $SU(G) \geqslant 3$. Then the group $(G,X)$ cannot be of affine, simple diagonal action or product action type: since $SU(X)=1$ the affine case cannot happen by Lemma~\ref{lem:affinecase}. If $(G,X)$ was of simple diagonal action type then, by Lemma~\ref{lemma:diagonal}, $SU(G) \leqslant 2$, which is not the case. The group $(G,X)$ cannot be of product action type as for otherwise there would be an associated definable permutation group $(H,Y)$ with $X=Y^\ell$ (for $\ell \geqslant 2$) and $SU(Y) \geqslant 1$, so we would get $SU(X) \geqslant 2$ which is not the case.

So $(G,X)$ is an almost simple group. Setting $\sim$ a relation on $X$ defined by $x \sim y$ if and only if $|G_x: G_x \cap G_y| < \infty$, Lemma~\ref{lem:congruencefact} ensures that $\sim$ has exactly one class or all its classes are trivial. In the latter case $SU(X)=1$ implies that there are at most finitely many $G_x$-orbits. In the former case $\widetilde{N}_G(G_x)=G$ for all $x\in X$ and one gets a contradiction as in the proof of Theorem~\ref{th:primitive}. Set $S={\rm Soc}_d(G)$. Then $S$ is a (twisted) Chevalley group over a pseudo-finite field $F$, and, as there are at most finitely many $S_x$-orbits, it follows from \cite[Theorem 2]{Seitz74} that the action of $S$ on $X$ is on the cosets of a parabolic subgroup $P$ of $S$. Recall also that we have $SU(G)=SU(S)$  (Lemma~\ref{lem: red to simple}). 

So it is enough to show that $S\cong {\rm PSL}_2(F)$: if this is the case, then, by Lemma~\ref{lem: dimension inequality}, $SU(S)={\rm dim}( {\rm PSL}_2(F))SU(F)=3 SU(F)$. At the same time a Borel subgroup $B\cong F^+ \rtimes F^\times$ of $S$, which in ${\rm PSL}_2(F)$ is also a maximal parabolic subgroup, has $SU$-rank $2 SU(F)$ and $SU(X)=SU(S/B)=1$. So $$3SU(F)=SU(S)=SU(B)+SU(S/B)=2 SU(F)+1$$ and hence $SU(F)=1$. We get that $SU(G)=3$ and (3) holds. Below we observe that indeed $S\cong {\rm PSL}_2(F)$.

The assumption $SU(X)=1$ together with Lemma~\ref{lem: nonstandard bound} and Propositions~\ref{prop: Chevalley bound} and~\ref{prop: bound twisted} (recall also Proposition~\ref{prop: almost simple case}) guarantee that one of the following holds. \begin{enumerate}[(a)]
\item $(G,X)$ is non-standard and $SU(G) \leqslant 6$.
\item $(G,X)$ is standard, $S$ is a Chevalley group of classical type and $SU(S) \leqslant 3$.
\item $(G,X)$ is standard, $S$ is a classical twisted Chevalley group obtained from a Chevalley group $H$ such that $H$ is not of type $A_2$ and $H$ is of $SU$-rank at most $10$. 
\end{enumerate}

Let $P$ be a parabolic subgroup of $S$. If $S$ is a Chevalley group then, by Fact~\ref{fact: sam fact}, $1=SU(X)=SU(S/P) \geqslant n$, where $n$ is the Lie rank of $S$. So the Lie rank of $S$ is $1$ and hence $S \cong {\rm PSL}_2(F)$. So the case (b) gives us (3), and from now on we may assume that $S$ is a twisted Chevalley group. Let $H$ be the associated Chevalley group. If (c) happens then, since ${\rm PSL}_2$ (i.e. type $A_1$) has no twisted  analogue, $H$ is of type $A_n$ ($n \geqslant 3$) or $D_n$ ($n \geqslant 4$) and hence of dimension $n(n+1)+n$ or $2n(n-1)+ n$, respectively (see the proof of Lemma~\ref{lem: dimension inequality}). So $SU(H)={\rm dim}(H)SU(F)> 10$, which is not the case. 

Finally assume that (a) happens. We have already observed that the action is parabolic and that $S$ may be assumed to be a twisted group. Proposition~\ref{prop: bound twisted} (and the argument above) thus shows that $S$ is not of classical type $^2A_n$ ($n \geqslant2$) or $^2D_n$ ($n \geqslant 4$). So it remains to show that (a) cannot happen when $S$ is of exceptional type $^2B_2$, $^2G_2$, $^2F_4$, $^2E_6$ or $^3D_4$.

By \cite[Chapter 5]{Ryten2007}, $S$ is uniformly bi-interpretable with a pseudo-finite field $F$ or, in the case of Suzuki and Ree groups, with a a pseudo-finite difference field $(F, \sigma)$. We now observe that even when the $SU$-rank of $F$ (resp. $(F, \sigma)$) is as small as possible, $SU(F)=1$ (resp. $SU((F, \sigma))=1$), we violate the bound in (a). As is explained in the proof of \cite[Lemma 5.15]{Elwesetal}, on an ultraproduct of finite fields, by the results in \cite{Chatzidakis1992} the $SU$-rank of a definable set $Y$ is determined by the approximate cardinalities of the corresponding $Y_i$. The same holds for the difference fields over which the Suzuki and Ree groups are defined by \cite[Theorem 5.8]{Elwes2005} and \cite[Corollary 5.4]{EM08}. By bi-interpretability this is transferred to $S\equiv \prod_{i\in I}S_i/\mathcal{U}$. That is, if $S_i$ is bi-interpretable
with $\mathbb{F}_{q_i}$, then an ultraproduct of uniformly definable sets (each of cardinality roughly $\mu q^d$ ) has $SU$-rank $d$. The finite groups $^2B_2(q)$, $^2G_2(q)$ and $^2F_4(q)$ are defined over finite fields with $q= 2^{2k+1}$, $q=3^{2k+1}$ and $q= 2^{2k+1}$, respectively. We also consider $^2E_6(q^2)$ and $^3D_4(q^3)$. It follows by consideration of orders (see e.g. \cite[Theorem 14.3.2]{Carter1971}) that either $SU(S) > 6$ or $S$ is of type $^2B_2$ and $SU(S)=5$. In the former case we violate the bound in (a). If $S$ is of type $^2B_2$ (recall that $(G,X)$ is non-standard with parabolic action) then it is known that there is a base of size $3$ \cite[Theorem 3]{Burnessetal2009}. So (by repeating the arguments in Lemma~\ref{lem: nonstandard bound}), if $S$ is of type $^2B_2$ then $SU(S)\leqslant 3$, which is not the case. We have observed that (a) cannot happen.\end{proof}

\subsection*{Acknowledgements}

We would like to thank Sam Evens for helpful conversations on algebraic groups.  We would also like to thank the referee for a thoughtful and careful reading, which greatly improved the paper.

\bibliographystyle{plain}
\bibliography{ulla2021.bib}{}

@article{Macpherson2018,
  author  = {Dugald Macpherson}, 
  title   = {Model theory of finite and pseudofinite groups},
  journal = {Archive for Mathematical Logic},
  year    = {2018},
  volume  = {57},
}

@article{segal2023defining,
  title={Defining {$R$} and {$G(R)$}.},
  author={Segal, Dan and Tent, Katrin},
  journal={Journal of the European Mathematical Society (EMS Publishing)},
  volume={25},
  number={8},
  year={2023}
}

@Article{Zou2020,
  author  = {Tingxiang Zou},
  title   = {Pseudofinite primitive permutation groups acting on one-dimensional sets},
  journal = {Fundamenta {M}athematicae},
  year    = {2020},
  volume  = {250},
  pages   = {117--149},
}

@article{Elwesetal,
author = {Elwes, Richard and Jaligot, Eric and Macpherson, Dugald and Ryten, Mark},
title = {Groups in supersimple and pseudofinite theories},
journal = {Proceedings of the London Mathematical Society},
volume = {103},
number = {6},
pages = {1049-1082},
doi = {https://doi.org/10.1112/plms/pdr002},
url = {https://londmathsoc.onlinelibrary.wiley.com/doi/abs/10.1112/plms/pdr002},
eprint = {https://londmathsoc.onlinelibrary.wiley.com/doi/pdf/10.1112/plms/pdr002},
abstract = {We consider groups G interpretable in a supersimple finite rank theory T such that Teq eliminates ∃∞. It is shown that G has a definable soluble radical. If G has rank 2, then if G is pseudofinite, it is soluble-by-finite, and partial results are obtained under weaker hypotheses, such as ‘functional unimodularity’ of the theory. A classification is obtained when T is pseudofinite and G has a definable and definably primitive action on a rank 1 set.},
year = {2011}
}

@article{LMT,
author = {Liebeck, Martin W. and Macpherson, Dugald and Tent, Katrin},
title = {Primitive permutation groups of bounded orbital diameter},
journal = {Proceedings of the London Mathematical Society},
volume = {100},
number = {1},
pages = {216-248},
doi = {https://doi.org/10.1112/plms/pdp024},
url = {https://londmathsoc.onlinelibrary.wiley.com/doi/abs/10.1112/plms/pdp024},
eprint = {https://londmathsoc.onlinelibrary.wiley.com/doi/pdf/10.1112/plms/pdp024},
abstract = {We give a description of infinite families of finite primitive permutation groups for which there is a uniform finite upper bound on the diameter of all orbital graphs. This is equivalent to describing families of finite permutation groups such that every ultraproduct of the family is primitive. A key result is that, in the almost simple case with socle of fixed Lie rank, apart from very specific cases, there is such a diameter bound. This is proved using recent results on the model theory of pseudofinite fields and difference fields.},
year = {2010}
}

@article{liebeck1988nan,
  title={On the \text{O'Nan}-\text{S}cott theorem for finite primitive permutation groups},
  author={Liebeck, Martin W and Praeger, Cheryl E and Saxl, Jan},
  journal={Journal of the Australian Mathematical Society},
  volume={44},
  number={3},
  pages={389--396},
  year={1988},
  publisher={Cambridge University Press}
}

@article{Smith2015,
title = {A classification of primitive permutation groups with finite stabilizers},
journal = {Journal of Algebra},
volume = {432},
pages = {12-21},
year = {2015},
issn = {0021-8693},
doi = {https://doi.org/10.1016/j.jalgebra.2015.01.023},
url = {https://www.sciencedirect.com/science/article/pii/S0021869315000629},
author = {Simon M. Smith},
keywords = {Infinite permutation groups, Primitive permutation groups, Permutation groups},
abstract = {We classify all infinite primitive permutation groups possessing a finite point stabilizer, thus extending the seminal Aschbacher–O'Nan–Scott Theorem to all primitive permutation groups with finite point stabilizers.}
}

@article{Karhumaki-Wagner2024,
  author  = {Ulla Karhum\"{a}ki and Frank O. Wagner},
  title   = {Finite-dimensional pseudofinite groups of small dimension, without {C}{F}{S}{G}},
journal = {{C}onfluentes {M}athematici},
  year    = {2025},
  volume  = {17},
  pages   = {55--71},
}

@Article{Karhumaki2021,
  author  = {Ulla Karhum\"{a}ki},
  title   = {A note on pseudofinite groups of finite centraliser dimension},
  journal = {{J}ournal of {G}roup {T}heory},
  year    = {2022},
  volume  = {25},
  pages   = {379--388},
}

@article{deloro2024zilber,
  title={Zilber’s skew-field lemma},
  author={Deloro, Adrien},
  journal={Model Theory},
  volume={3},
  number={2},
  pages={571--586},
  year={2024},
  publisher={Mathematical Sciences Publishers}
}

@Article{Chatzidakis1992,
  author  = {Zo\'{e} Chatzidakis and {L}ou van den {D}ries and {A}ngus {M}acintyre},
  title   = {Definable sets over finite fields},
  journal = {J. {R}eine {A}ngew. {M}ath.},
  year    = {1992},
  volume  = {427},
  pages   = {107--135},
}

@Article{Seitz74,
  author  = {Gary M. Seitz},
  title   = {Small rank permutation representations of finite \text{C}hevalley groups},
  journal = {J. Algebra},
  year    = {1974},
  volume  = {28},
  pages   = {508-517},
}

@Article{Wilson1995,
  author  = {John S. Wilson},
  title   = {On simple pseudofinite groups},
  journal = {Journal of the London Mathematical Society},
  year    = {1995},
  volume  = {51},
  number  = {2},
  pages   = {471--490},
}

@PhdThesis{Ryten2007,
  author = {Mark {J}. {R}yten},
  title  = {Model {T}heory of {F}inite {D}ifference {F}ields and {S}imple {G}roups},
  school = {University of Leeds},
  year   = {2007},
}

@incollection{BC08,
  author = {Borovik, Alexandre and Cherlin, Gregory},
  title = {Permutation groups of finite {Morley} rank},
  editor = {Chatzidakis, Z. and Macpherson, H.D. and Pillay, A. and Wilkie, A.J.},
  booktitle = {Model Theory with applications to algebra and analysis, {I} and {II}},
  publisher = {Cambridge University Press},
  year = {2008}
}

@Book{Carter1971,
  title     = {Simple {G}roups of {L}ie {T}ype},
  publisher = {Wiley},
  year      = {1971},
  author    = {Roger W. Carter},
  address   = {New York},
}

@Article{Wagner2020, 
author={Wagner, Frank O.}, 
title={DIMENSIONAL GROUPS AND FIELDS}, volume={85}, 
journal={The Journal of Symbolic Logic}, 
number={3}, 
publisher={Cambridge University Press}, 
year={2020}, 
pages={918–936},
DOI={10.1017/jsl.2020.48}}

@Book{Wagner2000,
  title     = {Simple {T}heories},
  publisher = {Kluwer {A}cademic {P}ublishers, {D}ordrecht, {N}{L}},
  year      = {2000},
  author    = {Frank O. Wagner},
  isbn      = {0521598397, 9780521598392},
}

@Book{Steinberg1967,
  title     = {{L}ectures on {C}hevalley {G}roups},
  publisher = {{U}niversity {L}ecture {S}eries, {Y}ale University, {N}ew Haven},
  year      = {1967. {N}otes prepared by {J}ohn {F}aulkner and {R}obert {W}ilson.},
  author    = {Steinberg, Robert},
}

@article{flaschka1991torus,
  title={Torus orbits in {$G/P$}},
  author={Flaschka, Hermann and Haine, Luc},
  journal={Pacific Journal of Mathematics},
  volume={149},
  number={2},
  pages={251--292},
  year={1991},
  publisher={Mathematical Sciences Publishers}
}

@Book{Malle-Testerman,
  title     = {Linear {A}lgebraic {G}roups and {F}inite {G}roups of {L}ie Type},
  publisher= {Cambridge {U}niversity {P}ress},
  year      = {2011},
  author    = {Malle, {G}unter and {T}esterman, {D}onna {M}.},
}

@Article{Point1999,
  author   = {Point, Fran\c{c}oise},
  title    = {Ultraproducts and {C}hevalley groups},
  journal  = {Archive for Mathematical Logic},
  year     = {1999},
  volume   = {38},
  number   = {6},
  pages    = {355--372},
  issn     = {0933-5846},
  doi      = {10.1007/s001530050131},
  fjournal = {Archive for Mathematical Logic},
  mrclass  = {03C60 (03B25 03C20 20G99)},
  mrnumber = {1711404},
  url      = {http://dx.doi.org/10.1007/s001530050131},
}

@Article{Point-Houcine2013,
  author   = {Ould Houcine, Abderezak and Point, Fran\c{c}oise},
  title    = {Alternatives for pseudofinite groups},
  journal  = {Journal of Group Theory},
  year     = {2013},
  volume   = {16},
  number   = {4},
  pages    = {461--495},
  issn     = {1435-4446},
}

@Article{Palacin2018,
  author   = {Daniel Palac\'{i}n},
  title    = {Finite rank and pseudofinite groups},
  journal  = {Journal of Group Theory},
  year     = {2018},
  volume   = {21},
  number   = {4},
  pages    = {583--591},
  issn     = {1435-4446},
}

@book{humphreys2012introduction,
  title={Introduction to Lie algebras and representation theory},
  author={Humphreys, James E},
  volume={9},
  year={2012},
  publisher={Springer Science \& Business Media}
}

@book{bjorner2005combinatorics,
  title={Combinatorics of Coxeter groups},
  author={Bj{\"o}rner, Anders and Brenti, Francesco},
  volume={231},
  year={2005},
  publisher={Springer}
}

@Article{Wilson2009,
  author  = {John S. Wilson},
  title   = {First-order characterization of the radical of a finite group},
  journal = {Journal of Symbolic Logic},
  year    = {2009},
  volume  = {74},
  pages   = {1429--1435},
}

@Article{Myasnikov2004,
  author  = {Myasnikov, Alexei and Shumyatsky, Pavel},
  title   = {Discriminating groups and $c$-dimension},
  journal = {Journal of Group Theory},
  year    = {2004},
  volume  = {7},
  number  = {1},
  pages   = {135--142},
}

@article{Macpherson-Pillay1995,
author = {Macpherson, Dugald and Pillay, Anand},
title = {Primitive Permutation Groups of Finite {M}orley Rank},
journal = {Proceedings of the London Mathematical Society},
volume = {s3-70},
number = {3},
pages = {481-504},
doi = {https://doi.org/10.1112/plms/s3-70.3.481},
url = {https://londmathsoc.onlinelibrary.wiley.com/doi/abs/10.1112/plms/s3-70.3.481},
eprint = {https://londmathsoc.onlinelibrary.wiley.com/doi/pdf/10.1112/plms/s3-70.3.481},
abstract = {We prove a version of the O'Nan–Scott Theorem for definably primitive permutation groups of finite Morley rank. This yields questions about structures of finite Morley rank of the form (F, +,., H) where (F, +,.) is an algebraically closed field and H is a central extension of a simple group with H ⩽ GL(n, F). We obtain partial results on such groups H, and show for example that if char(F) = 0, H is irreducible, and (in the sense for stable groups) some Borel subgroup of H is non-abelian then H = Z(H). E where E ⩽ H is algebraic, that is, definable in (F, +,.).},
year = {1995}
}

@article{elwes2008measurable,
  title={Measurable groups of low dimension},
  author={Elwes, Richard and Ryten, Mark},
  journal={Mathematical Logic Quarterly},
  volume={54},
  number={4},
  pages={374--386},
  year={2008},
  publisher={Wiley Online Library}
}

@techreport{sweeting_decomposing,
  author       = {Sweeting, Naomi},
  title        = {Decomposing Flag Varieties},
  institution  = {Harvard University},
  year         = {2023},
  type         = {Minor Thesis},
  address      = {Cambridge, MA},
  url          = {https://scholar.harvard.edu/files/naomisweeting/files/flag_varieties_-_naomi_sweeting.pdf},
}

@inbook{EM08,
  author = {Elwes, Richard and Macpherson, Dugald},
  title = {A survey of asymptotic classes and measurable structures},
  booktitle = {Model Theory with Applications to Algebra and Analysis},
  editor = {Chatzidakis, Zo\'e and Macpherson, Dugald and Pillay, Anand and Wilkie, Alex},
  year = {2008},
  pages = {125--160},
  publisher = {Cambridge University Press},
  series = {London Mathematical Society Lecture Note Series}
}

@PhdThesis{Elwes2005,
  author = {Richard Elwes},
  title  = {Dimension and measure in finite first order structures},
  school = {University of Leeds},
  year   = {2005},
}

@article{Burnessetal2009,
author = {Burness, Timothy C. and Liebeck, Martin W. and Shalev, Aner},
title = {Base sizes for simple groups and a conjecture of \text{C}ameron},
journal = {Proceedings of the London Mathematical Society},
volume = {98},
number = {1},
pages = {116-162},
doi = {https://doi.org/10.1112/plms/pdn024},
url = {https://londmathsoc.onlinelibrary.wiley.com/doi/abs/10.1112/plms/pdn024},
eprint = {https://londmathsoc.onlinelibrary.wiley.com/doi/pdf/10.1112/plms/pdn024},
abstract = {Let G be a permutation group on a finite set Ω. A base for G is a subset B ⊆ Ω with pointwise stabilizer in G that is trivial; we write b(G) for the smallest size of a base for G. In this paper we prove that b(G) ⩽ 6 if G is an almost simple group of exceptional Lie type and Ω is a primitive faithful G-set. An important consequence of this result, when combined with other recent work, is that b(G) ⩽ 7 for any almost simple group G in a non-standard action, proving a conjecture of Cameron. The proof is probabilistic and uses bounds on fixed point ratios.},
year = {2009}
}

@book{springer1998linear,
  title={Linear Algebraic Groups},
  author={Springer, T. A.},
  year={1998},
  edition={2},
  publisher={Birkh{\"a}user},
  address={Boston, MA},
  series={Modern Birkh{\"a}user Classics},
  note={2nd printing 2008},
  isbn={978-0-8176-4839-8},
  isbn-13={9780817648398},
  doi={10.1007/978-0-8176-4840-4},
  pages={xii+334},
  url={https://link.springer.com/book/10.1007/978-0-8176-4840-4}
}

\end{document}